\documentclass[12pt]{article}
\usepackage[utf8]{inputenc}
\usepackage[T1]{fontenc} 
\usepackage{amsmath}
\usepackage{amsfonts}
\usepackage{amssymb}
\usepackage{amsthm}
\usepackage{color}
\usepackage{esint}
\usepackage{enumerate}
\usepackage{psfrag}
\usepackage{stmaryrd}
\usepackage[english]{babel}
\usepackage{authblk}
\usepackage{hyperref}
\usepackage{graphicx}
\usepackage{graphics}
\usepackage{epstopdf}
\usepackage{epsfig}
\usepackage{setspace}

\usepackage[sans]{dsfont}
\usepackage{algorithm}
\usepackage{algpseudocode}
\usepackage{subcaption}
\usepackage{tikz}
\usepackage{pgfplots} 
\pgfplotsset{
    legend cell align = {left},
    compat=1.9,
    tick pos = left,
    grid = both,
    grid style = {densely dotted, gray}
}
\usetikzlibrary{math}
\usepackage{booktabs} 
\usepackage{colortbl} 
\usepackage{pgfplotstable} 

\newcommand{\RR}{\mathbb{R}}
\newcommand{\PP}{\mathbb{P}}
\newcommand{\NN}{\mathbb{N}}
\newcommand{\dis}{\displaystyle}

\newcommand{\eps}{\varepsilon}

\newcommand*{\colorboxed}{}
\def\colorboxed#1#{%
  \colorboxedAux{#1}%
}
\newcommand*{\colorboxedAux}[3]{%
  \begingroup
    \colorlet{cb@saved}{.}%
    \color#1{#2}%
    \boxed{%
      \color{cb@saved}%
      #3%
    }%
  \endgroup
}

\DeclareMathOperator{\spn}{Span}

\newtheorem{theorem}{Theorem}

\newtheorem{rmq}[theorem]{Remark}
\newtheorem{proposition}[theorem]{Proposition}

\begin{document}

\title{A Multiscale Finite Element Method for reaction-diffusion eigenproblems arising from neutronics}

\date{\today}

\author{Claude Le Bris}
\author{Alb\'eric Lefort}
\author{Frédéric Legoll}
\affil{\small École Nationale des Ponts et Chaussées, Institut Polytechnique de Paris, CNRS, 6 et 8 avenue Blaise Pascal, 77420 Champs-sur-Marne, France}
\affil{\small MATHERIALS project-team, Inria Paris, 48 rue Barrault, 75013 Paris, France}
\affil{\small Emails: \{claude.le-bris,alberic.lefort,frederic.legoll\}@enpc.fr}

\maketitle

\begin{abstract}
  We consider reaction-diffusion eigenproblems with oscillatory diffusion and reaction coefficients. The reaction coefficient magnitude is large: the corrector equation identified by periodic homogenization involves both the diffusion and the reaction operators. We study the numerical approximation of this problem using the Multiscale Finite Element Method (MsFEM). This now classical method is a finite element type method that performs a Galerkin approximation of the oscillatory problem on a specific, problem dependent, basis set. The basis functions are precomputed in an offline stage. Inspired by homogenization theory and using some filtering ideas, we show how to define these basis functions in order to obtain an efficient method. The comprehensive set of numerical experiments that we present, in periodic and non-periodic cases, for the scalar-valued version of the problem (which is then self-adjoint) and for the vector-valued version of the problem (which is then in general non self-adjoint), demonstrates the performance of the approach. Some theoretical arguments complement the numerical observations.
\end{abstract}


\section{Introduction}

We consider the reaction-diffusion eigenvalue problem
\begin{equation} \label{pb reaction diffusion stationnaire}
  \Sigma^\eps \, u^\eps - \eps^2 \operatorname{div} \left(A^\eps \nabla u^\eps \right) = \lambda^\eps \, \sigma^\eps \, u^\eps \ \ \text{in $\Omega$}, \qquad u^\eps = 0 \ \ \text{on $\partial \Omega$},
\end{equation}
on a bounded domain $\Omega \subset \RR^d$. The coefficients $A^\eps$, $\Sigma^\eps$ and $\sigma^\eps$ are assumed to oscillate on a typical lengthscale $\eps$, which is much smaller than the size of the domain $\Omega$. We are interested in the first eigencouple $(\lambda^\eps,u^\eps) \in \RR \times H^1_0(\Omega)$ of~\eqref{pb reaction diffusion stationnaire}.

Problem~\eqref{pb reaction diffusion stationnaire} may be either scalar-valued (in which case $A^\eps(x)$ is a $d \times d$ matrix and $\Sigma^\eps(x)$ and $\sigma^\eps(x)$ are scalars, for any $x \in \Omega$) or vector-valued (in which case $A^\eps(x)$ is a fourth-order tensor and $\Sigma^\eps(x)$ and $\sigma^\eps(x)$ are matrices). In both cases, we assume that $A^\eps$, $\Sigma^\eps$ and $\sigma^\eps$ satisfy all the required assumptions so that~\eqref{pb reaction diffusion stationnaire} is well-posed and so that the smallest eigenvalue $\lambda^\eps$ is real and simple, while the associated eigenvector $u^\eps$ has all its components being real-valued. We refer to Sections~\ref{sec:one-group energy case} and~\ref{sec:MsFEM multigroup} for precise statements.

Equation~\eqref{pb reaction diffusion stationnaire} appears in several applied fields, including neutronics problems, in which case $u^\eps(x)$ represents the density of neutrons at point $x \in \Omega$. The oscillatory nature of the coefficients of~\eqref{pb reaction diffusion stationnaire} is then directly related to the heterogeneity of the physical properties within the domain. All neutrons may be considered to have the same energy, in which case the problem is modeled by the scalar-valued version of~\eqref{pb reaction diffusion stationnaire}. More complex models consider several families of neutrons, with different energies, which leads to a vector-valued problem. We refer e.g. to~\cite{ciarletNumericalAnalysisMixed2018,duderstadt,mccallien_1970,woznicki_1998,maligeEtudeMathematiqueNumerique1996} for more details on this applicative context.

\medskip

We seek a numerical approximation of~\eqref{pb reaction diffusion stationnaire}. A classical finite element method (FEM), say a standard $\PP_1$ approach, requires a mesh sufficiently fine to capture the heterogeneous nature of the coefficients. This is the case even if the objective is simply to approximate the eigenvalue $\lambda^\eps$, or to approximate the macroscopic features of the eigenfunction $u^\eps$. Using such a fine mesh leads to a prohibitively expensive discrete problem. Alternative approaches are thus in order. In the case of purely diffusive problems with oscillatory coefficients, several dedicated methods have been introduced to adequately capture the oscillatory behaviour of $A^\eps$ on a coarse mesh: we mention the Heterogeneous Multiscale Method~\cite{abdulleHeterogeneousMultiscaleMethod2012,eHeterogeneousMultiscaleMethods2003}, the Localized Orthogonal Decomposition~\cite{altmannNumericalHomogenizationScale2021,malqvistLocalizationEllipticMultiscale2014}, and the Multiscale Finite Element Method (MsFEM, \cite{houMultiscaleFiniteElement1997a,Efendiev,lebrisExamplesComputationalApproaches2017}). To the best of our knowledge, none of these methods has been adapted to the case of reaction-diffusion eigenproblems of the form~\eqref{pb reaction diffusion stationnaire}, and this is the objective of this work, focusing on MsFEM-type approaches.

We recall that MsFEM consists in a Galerkin approximation of the problem under consideration, on a basis set generated by precomputed basis functions which are well-adapted to the fine-scale properties of the differential operator of interest (as opposed to a basis of generic polynomial functions, as in classical FEM). These basis functions are defined as the solutions to local problems that resemble the problem of interest. The MsFEM is a two-step procedure. The offline stage, which is the computationally expensive phase, involves solving local problems (posed on each element of a coarse mesh) to construct the basis functions. The coefficients of these local problems are oscillatory: a fine mesh of each coarse element is introduced to perform in practice these computations. In the online phase, the global problem is discretized (in a Galerkin fashion) on the discretization space built in the offline phase.

The precomputation of the basis functions in the offline stage has a cost, since a fine mesh has to be used. Overall, there is however a significant computational gain if the global problem is to be solved multiple times (and this fact is also true for the other multiscale numerical approaches alluded to above). In such cases, the adapted basis functions indeed have to be computed only once and the dimension of the global discrete problem is drastically reduced in comparison to that of a direct approach put in action on a fine mesh. In our setting, a first multi-query context is the situation when one wants to compute several eigencouples (and not only the first one) of~\eqref{pb reaction diffusion stationnaire}. As shown in~\cite[Chapter~4, Section~4.1]{these_lefort}, we can actually use the same MsFEM basis functions for all eigencouples. A second relevant context is when one considers the time-dependent problem associated to~\eqref{pb reaction diffusion stationnaire}. We can then use the same MsFEM basis functions for all time steps (see~\cite[Chapter~5]{these_lefort}). Yet another multi-query context is related to some specific applications of~\eqref{pb reaction diffusion stationnaire}, namely neutronics problems, as described in~\cite{allaireOptimizationNuclearFuel2002}. In that context, the first eigencouple of~\eqref{pb reaction diffusion stationnaire} has to be computed for a lot of different spatial recombinations of the coefficients $A^\eps$, $\Sigma^\eps$ and $\sigma^\eps$ (in the simplest case, this corresponds to exchanging the values of the coefficients over non-overlapping subdomains of $\Omega$). An idea is then to re-use the MsFEM basis functions for all these different spatial recombinations by recombining the basis functions in the same way as the coefficients are recombined (see~\cite[Chapter~4, Section~4.2]{these_lefort} for some preliminary tests).

\medskip

The numerical method we describe in this article aims to solve the problem~\eqref{pb reaction diffusion stationnaire} in a general setting, and not only when $A^\eps$, $\Sigma^\eps$ and $\sigma^\eps$ are periodic. However, to proceed in a pedagogical manner, we first detail some known homogenization results in the periodic framework, which serve as the foundation for constructing the numerical method.

The article is organized as follows. In Section~\ref{sec:one-group energy case}, we consider the scalar-valued version of~\eqref{pb reaction diffusion stationnaire}, which is then a self-adjoint problem. We first recall some periodic homogenization results, and next describe a preliminary method, which is a MsFEM-type method which uses some objects introduced by the homogenization theory. We then present the actual MsFEM approach, which is a generalization (based on filtering ideas) of the preliminary method. This actual method can be put in practice without any geometric assumptions (e.g. periodicity) on the coefficients. We investigate the efficiency of this method on several numerical cases, including periodic cases (for which we can also consider, as a matter of comparison, the preliminary method) and non-periodic cases (to assess the robustness of the method). In Section~\ref{sec:MsFEM multigroup}, we explain how to extend the MsFEM approach to the vector-valued version of~\eqref{pb reaction diffusion stationnaire}. In that case, which is very relevant from the application viewpoint (think of the multiple energy case in neutronics), the problem is in general non self-adjoint, and thus mathematically more challenging. Similarly to Section~\ref{sec:one-group energy case}, we first recall some periodic homogenization results, before introducing our MsFEM method and illustrating its efficiency on the basis of several numerical test cases. We conclude this article by collecting some numerical analysis results. In Section~\ref{sec:demo MsFEM preliminaire}, we present a proof of convergence of the preliminary method, in the scalar-valued case. In Section~\ref{sec:demo small perturbations}, we present a complete analysis of the filtering method in a restricted setting, complemented with several numerical comparisons in more general settings.

\medskip

In short, the conclusions of this article are the following. First, in the periodic setting, the two numerical approaches we introduce, the preliminary one and the actual one, provide results with very close accuracy. The filtering ideas that we put in practice thus yield an accurate approximation of the theoretical objects introduced by the homogenization theory. Second, our actual approach is robust with respect to the presence of small scales, and its accuracy improves, at fixed size of the coarse mesh, when $\eps$ decreases (so that each coarse element contains a larger number of periodic cells). Third, for periodic and quasi-periodic problems deemed equally difficult (by the equally bad accuracy provided by a standard $\PP_1$ method), our approach yields results of similar accuracy, thereby demonstrating its robustness with respect to the microstructure. Fourth, the accuracy improves when the coarse mesh size decreases, with the limitation that each coarse element should contain a sufficient number of small scale oscillations. Last, all these conclusions are not limited to the scalar-valued variant of the problem, but also hold for the vector-valued version (which is, as pointed out above, mathematically different from the scalar version in that it is non self-adjoint).

\section{The scalar-valued case} \label{sec:one-group energy case}

We focus in this section on the case where the problem of interest is scalar-valued. We thus consider~\eqref{pb reaction diffusion stationnaire} where, for any $x \in \Omega$, $A^\eps(x)$ is a $d \times d$ symmetric matrix, $A^\eps$ belongs to $L^\infty(\Omega)$ and satisfies the following bounds: there exists $\beta \geq \alpha > 0$ such that, for any $\eps$ and any $\xi,\eta \in \RR^d$, we have
\begin{equation*} 
  \alpha \, |\xi|^2 \leq \xi^T A^\eps(x) \, \xi \qquad \text{and} \qquad | \eta^T A^\eps(x) \, \xi | \leq \beta \, |\eta| \, |\xi| \qquad \text{a.e. in $\Omega$}. 
\end{equation*}
The functions $\Sigma^\eps$ and $\sigma^\eps$ are assumed to be scalar-valued, to belong to $L^\infty(\Omega)$ and to be bounded away from 0:
\begin{equation*} 
  \alpha \leq \sigma^\eps(x) \qquad \text{and} \qquad \alpha \leq \Sigma^\eps(x) \qquad \text{a.e. in $\Omega$}. 
\end{equation*}
In Sections~\ref{sec:known results periodic framework} and~\ref{sec:MsFEM preliminaire}, we assume the coefficients to be periodic (which, we recall, is only a preliminary step toward the generality we aim at):
\begin{equation} \label{eq:period}
  A^\eps := A(\cdot/\eps), \quad \Sigma^\eps := \Sigma(\cdot/\eps) \quad \text{and} \quad \sigma^\eps := \sigma(\cdot/\eps),
\end{equation}
where $A$, $\Sigma$ and $\sigma$ are $Y$-periodic functions, where $Y$ is the periodicity cell (a typical choice is $Y = (0,1)^d$). We assume that $A$, $\Sigma$ and $\sigma$ all belong to $L^\infty(Y)$ and satisfy the following bounds: for any $\xi,\eta \in \RR^d$,
\begin{equation} \label{hypothese coercivite D}
  \alpha \, |\xi|^2 \leq \xi^T A(y) \, \xi, \quad | \eta^T A(y) \, \xi | \leq \beta \, |\eta| \, |\xi|, \quad \alpha \leq \sigma(y), \quad \alpha \leq \Sigma(y)
\end{equation}
a.e. in $Y$. Under this periodicity assumption, after having recalled homogenization results in Section~\ref{sec:known results periodic framework}, we will be in position, in Section~\ref{sec:MsFEM preliminaire}, to introduce a preliminary MsFEM-type method. The periodicity assumption is then relaxed in Section~\ref{sec:actual_msfem_general}, where we introduce our actual MsFEM approach. Numerical results are collected in Section~\ref{sec:num_one_group}.

\subsection{Periodic homogenization results} \label{sec:known results periodic framework}

We recall the following results.

\begin{theorem}[Theorem~2.2 of~\cite{allaireHomogenizationSpectralProblem2000}] \label{thm:spectral}
  Under Assumptions~\eqref{eq:period} and~\eqref{hypothese coercivite D}, Problem~\eqref{pb reaction diffusion stationnaire} admits a countable number of eigenvalues, which are all real and positive. The first (i.e. smallest) eigenvalue is simple, and the corresponding eigenfunction can be chosen positive on $\Omega$. In addition, all eigenfunctions of~\eqref{pb reaction diffusion stationnaire} belong to $H^1_0(\Omega) \cap C^{0,s}(\Omega)$ for some $s > 0$.
\end{theorem}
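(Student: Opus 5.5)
The plan is to recast~\eqref{pb reaction diffusion stationnaire} as the spectral problem of a compact, self-adjoint, positive operator, apply Hilbert--Schmidt theory for the countable real positive spectrum, use a Krein--Rutman / minimizer argument for simplicity and positivity of the first eigenfunction, and invoke De Giorgi--Nash--Moser for the Hölder regularity. Throughout, $\eps$ is fixed, so periodicity plays no role beyond guaranteeing that the coefficients satisfy the uniform bounds~\eqref{hypothese coercivite D}.

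Introduce the bilinear form
\[
a^\eps(u,v) = \int_\Omega \Sigma^\eps \, u \, v + \eps^2 \int_\Omega A^\eps \nabla u \cdot \nabla v
\]
on $H^1_0(\Omega)$. Since $A$ is symmetric, $a^\eps$ is symmetric. By~\eqref{hypothese coercivite D} it is continuous and coercive, with $a^\eps(u,u) \geq \alpha \, \eps^2 \, \|\nabla u\|^2_{L^2}$. Equip $L^2(\Omega)$ with the weighted inner product $(u,v)_\sigma = \int_\Omega \sigma^\eps u v$, which is equivalent to the standard one because $\alpha \leq \sigma^\eps \leq \|\sigma\|_{L^\infty}$. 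By Lax--Milgram, for each $f \in L^2(\Omega)$ there is a unique $Tf \in H^1_0(\Omega)$ with $a^\eps(Tf,v) = (f,v)_\sigma$ for all $v$. The operator $T$ is bounded from $L^2$ into $H^1_0$, hence compact on $L^2$ by Rellich (using that $\Omega$ is bounded). It is self-adjoint for $(\cdot,\cdot)_\sigma$ by symmetry of $a^\eps$, and positive since $(Tf,f)_\sigma = a^\eps(Tf,Tf) > 0$ for $f \neq 0$.

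The eigenpairs $(\lambda,u)$ of~\eqref{pb reaction diffusion stationnaire} are exactly those with $Tu = \lambda^{-1} u$. The spectral theorem for compact self-adjoint positive operators then gives a countable family of eigenvalues $0 < \lambda_1 \leq \lambda_2 \leq \dots \to \infty$, all real and positive.

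For the first eigenvalue, use the Rayleigh quotient characterization
\[
\lambda_1 = \min_{u \in H^1_0 \setminus\{0\}} \frac{a^\eps(u,u)}{(u,u)_\sigma}.
\]
Since $|u| \in H^1_0$ with $\nabla|u| = \operatorname{sgn}(u)\nabla u$, the quotient of $|u|$ equals that of $u$, so $|u|$ is also a minimizer and hence an eigenfunction. It satisfies $-\eps^2\operatorname{div}(A^\eps\nabla|u|) + \Sigma^\eps|u| = \lambda_1\sigma^\eps|u| \geq 0$. The main obstacle is to obtain strict positivity with only $L^\infty$ coefficients, since the classical strong maximum principle needs regularity. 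I would invoke the weak Harnack inequality of De Giorgi--Nash--Moser (Trudinger's version for nonnegative supersolutions), applied to the operator with the bounded zero-order term moved appropriately. This gives $|u| > 0$ in $\Omega$, assuming $\Omega$ is connected.

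Simplicity then follows. If $u_1, u_2$ are two first eigenfunctions, choose $c$ so that $u_1 - c\,u_2$ is orthogonal to $u_2$ in the $\sigma$-weighted product. This combination, if nonzero, is a first eigenfunction and so has constant sign, since $u$ and $|u|$ both minimize. But a constant-sign function cannot be orthogonal to the positive eigenfunction $u_2$ in the $\sigma$-weighted product, as $\sigma^\eps \geq \alpha > 0$. Hence $u_1 - c\,u_2 = 0$ and $\lambda_1$ is simple. Alternatively, Krein--Rutman applied to the strongly positive operator $T$ gives the same conclusion.

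Finally, every eigenfunction solves a divergence-form elliptic equation with bounded measurable coefficients and an $L^2$ zero-order term. Moser iteration first gives $u \in L^\infty$. De Giorgi--Nash then gives $u \in C^{0,s}$ for some $s>0$ depending on $d$, $\alpha$, $\beta$ and $\eps$, which is interior regularity. Up to the boundary, the same estimate holds under a mild regularity assumption on $\partial\Omega$, such as an exterior cone condition, which I would state as implicit in the cited result.
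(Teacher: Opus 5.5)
Your proof is correct. The paper gives no proof of its own: it quotes Theorem~2.2 of~\cite{allaireHomogenizationSpectralProblem2000} and justifies only the first assertion, by noting that, $A$ being symmetric, the problem defines a compact self-adjoint operator on $L^2(\Omega)$. That is exactly your operator $T$, self-adjoint for the $\sigma^\eps$-weighted product.

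For the remaining assertions you supply the standard self-adjoint route:
\begin{itemize}
\item the Rayleigh quotient and the $|u|$ trick;
\item Trudinger's weak Harnack inequality, applied to the nonnegative supersolution of $-\eps^2\operatorname{div}(A^\eps\nabla w)+\Sigma^\eps w\geq 0$;
\item orthogonality for simplicity;
\item De Giorgi--Nash--Moser for regularity.
\end{itemize}
Your caveats, connectedness of $\Omega$ and an exterior cone condition for H\"older continuity up to $\partial\Omega$, are the right ones.

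The step ``constant sign since $u$ and $|u|$ both minimize'' needs one more line. The functions $u^\pm=(|u|\pm u)/2$ lie in the first eigenspace and are nonnegative. By weak Harnack each is therefore either identically zero or positive, and $u^+u^-=0$ forces one of them to vanish.

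Your aside that Krein--Rutman applied to $T$ ``gives the same conclusion'' needs a qualifier. The positive cone of $L^2(\Omega)$ has empty interior, so the strong form of the theorem does not apply there. You would have to work in a space of continuous functions, which again uses boundary regularity, or invoke a version for positivity-improving compact operators on Banach lattices.

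This distinction matters beyond the present statement. Your variational argument is tied to the symmetric scalar case. For the non-self-adjoint vector-valued analogue, Theorem~\ref{thm:spectral multigroup}, there is no Rayleigh quotient, and a Krein--Rutman-type positivity argument is the route one has to take.
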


The first assertion of Theorem~\ref{thm:spectral} stems from the fact that, since the diffusion matrix $A$ is symmetric, Problem~\eqref{pb reaction diffusion stationnaire} defines a compact self-adjoint operator acting in $L^2(\Omega)$.

\medskip

We next consider the following cell eigenvalue problem: find $(\lambda^\infty,\psi) \in \RR \times H^1_{\rm per}(Y)$ such that
\begin{equation} \label{pb spectral}
  \Sigma \, \psi - \operatorname{div} \left(A \nabla \psi \right) = \lambda^\infty \, \sigma \, \psi \ \ \text{in $\RR^d$}, \qquad \text{$\psi$ is $Y$-periodic},
\end{equation}
where we recall that $H^1_{\rm per}(Y)$ is the subspace of $H^1_{\rm loc}(\RR^d)$ made of $Y$-periodic functions.

\begin{theorem}[Corollary~2.5 and Proposition~2.6 of~\cite{allaireHomogenizationSpectralProblem2000}] \label{thm:spectral psi}
  Under Assumption~\eqref{hypothese coercivite D}, Problem~\eqref{pb spectral} admits a countable number of eigenvalues, which are all real and positive. The first (i.e. smallest) eigenvalue is simple, and the corresponding eigenfunction can be chosen positive on $Y$. In addition, all eigenfunctions of~\eqref{pb spectral} belong to $H^1_{\rm per}(Y) \cap C^{0,s}_{\rm per}(Y)$ for some $s > 0$.
\end{theorem}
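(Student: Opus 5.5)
The plan is to recast the cell problem~\eqref{pb spectral} as the spectral problem of a compact, self-adjoint, positive operator on a weighted $L^2$ space, and then to use Krein--Rutman type arguments for simplicity and positivity and De Giorgi--Nash--Moser theory for regularity.

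\emph{Operator setting.} Consider on $H^1_{\rm per}(Y)$ the bilinear form
\[
a(\phi,\chi) = \int_Y A \nabla \phi \cdot \nabla \chi + \int_Y \Sigma \, \phi \, \chi .
\]
By~\eqref{hypothese coercivite D} and the symmetry of $A$, this form is symmetric, continuous and coercive on $H^1_{\rm per}(Y)$, with $a(\phi,\phi) \geq \alpha \|\phi\|_{H^1(Y)}^2$. Here the zeroth-order term $\Sigma \geq \alpha$ is essential: without it, constants would lie in the kernel. Equip $L^2(Y)$ with the weighted inner product $(f,g)_\sigma = \int_Y \sigma f g$, which is equivalent to the standard one since $\alpha \leq \sigma \in L^\infty$.

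By Lax--Milgram, for each $f \in L^2(Y)$ there is a unique $Tf \in H^1_{\rm per}(Y)$ with $a(Tf,\chi) = \int_Y \sigma f \chi$ for all $\chi$. The operator $T$ is self-adjoint for $(\cdot,\cdot)_\sigma$, since $(Tf,g)_\sigma = a(Tf,Tg)$. It is positive, and it is compact on $L^2(Y)$ by the Rellich compact embedding $H^1_{\rm per}(Y) \hookrightarrow L^2(Y)$. The eigenpairs of~\eqref{pb spectral} are exactly the pairs $(\lambda,\psi)$ with $T\psi = \lambda^{-1}\psi$. The spectral theorem for compact self-adjoint operators then gives countably many eigenvalues $\lambda = 1/\mu$, all real. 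They are positive because $\lambda \int_Y \sigma \psi^2 = a(\psi,\psi) > 0$.

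\emph{Simplicity and positivity of the first eigenfunction.} I would use the Rayleigh quotient characterization
\[
\lambda^\infty = \min_{\psi \neq 0} \frac{a(\psi,\psi)}{(\psi,\psi)_\sigma}.
\]
Since $a(|\psi|,|\psi|) = a(\psi,\psi)$, if $\psi$ is a minimizer then so is $|\psi|$, and hence $|\psi|$ is a nonnegative eigenfunction. The strong maximum principle, or equivalently the weak Harnack inequality of Trudinger for divergence-form operators with bounded coefficients applied on $\RR^d$ by periodicity, then shows that $|\psi| > 0$ everywhere. Consequently any first eigenfunction has constant sign: $\psi^+$ and $\psi^-$ are both minimizers whenever they are nonzero, but each must then be strictly positive, which is impossible for both at once. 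Two first eigenfunctions that are both positive cannot be $(\cdot,\cdot)_\sigma$-orthogonal, so the eigenspace is one-dimensional and $\lambda^\infty$ is simple.

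\emph{Regularity.} Any eigenfunction $\psi \in H^1_{\rm loc}(\RR^d)$ solves
\[
-\operatorname{div}(A\nabla\psi) = (\lambda\sigma - \Sigma)\,\psi,
\]
whose right-hand side has a bounded coefficient. Moser iteration gives $\psi \in L^\infty$, and De Giorgi--Nash then gives $\psi \in C^{0,s}$ locally, with $s$ depending only on $d$, $\alpha$ and $\beta$. Periodicity upgrades this to $\psi \in C^{0,s}_{\rm per}(Y)$.

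The main obstacle is the strict positivity step, which requires a Harnack inequality that remains valid under merely $L^\infty$ coefficients. I would cite the weak Harnack inequality from Gilbarg--Trudinger, Theorem~8.18, applied to $|\psi| \geq 0$, which is a supersolution of $-\operatorname{div}(A\nabla u) + \Sigma u \geq 0$.
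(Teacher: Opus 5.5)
Your proof is correct, but it does not follow the route behind the cited result. The paper gives no proof of its own. It quotes Corollary~2.5 and Proposition~2.6 of the reference, where the scalar cell problem is the one-group instance of the multigroup cell problem of Theorem~\ref{thm:spectral psi multigroup}. In that setting the operator is not self-adjoint and eigenvalues may be complex. The real, simple first eigenvalue with positive eigenvector, together with the adjoint eigenvector $\psi^\star$ sharing that eigenvalue, therefore has to come from a Krein--Rutman argument for a positive compact solution operator. The H\"older regularity comes from De Giorgi--Nash--Moser estimates, as in your last step.

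Despite your opening sentence, you do not actually use Krein--Rutman. Your simplicity and positivity step is the classical variational substitute: the Rayleigh quotient, its invariance under $\psi \mapsto |\psi|$, the splitting into $\psi^\pm$, the weak Harnack inequality, and the fact that two positive functions cannot be orthogonal in $(\cdot,\cdot)_\sigma$. For the scalar problem with symmetric $A$ this is more elementary and self-contained. It is also in line with the paper's own one-line justification of Theorem~\ref{thm:spectral} via a compact self-adjoint operator.

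The price is that it rests entirely on self-adjointness. It would not extend to the vector-valued, non-self-adjoint problem of Section~\ref{sec:MsFEM multigroup}, which the Krein--Rutman route handles at once. Both routes need the same analytic input for strict positivity: a strong maximum principle or weak Harnack inequality valid for merely bounded measurable coefficients.

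One minor point. Your pointwise claim $|\psi|>0$ everywhere uses the continuity from your regularity step. Without it, read the claim as a positive essential infimum on every ball, which is what weak Harnack gives directly and which suffices.
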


Since $\psi$ is positive and belongs to $C^{0,s}_{\rm per}(Y)$, we have that
\begin{equation} \label{eq:borne_psi}
  0 < c_- \leq \psi(y) \leq c_+ \quad \text{on $Y$},
\end{equation}
for some $c_+ \geq c_- > 0$.

\begin{rmq}
  The eigenvectors of~\eqref{pb reaction diffusion stationnaire} and~\eqref{pb spectral} are defined up to a multiplicative constant. For the first eigenvectors, we choose to fix these constants so that the $L^2$ norm of the eigenvectors is equal to 1, and such that the (first) eigenvectors are positive.
\end{rmq}

The next result (which is key to understand the construction of our approach) has been proved in~\cite{allaireAnalyseAsymptotiqueSpectrale1997,maligeEtudeMathematiqueNumerique1996}.

\begin{theorem}[From~\cite{allaireAnalyseAsymptotiqueSpectrale1997}] \label{changement variable veps}
  Consider the assumptions~\eqref{eq:period} and~\eqref{hypothese coercivite D}. Let $(\lambda^\eps,u^\eps)$ be an eigencouple of~\eqref{pb reaction diffusion stationnaire}, and let $(\lambda^\infty,\psi)$ be the first eigencouple of~\eqref{pb spectral}. Consider $\dis v^\eps := u^\eps/\psi(\cdot/\eps)$. We then have
  \begin{equation} \label{pb reaction diffusion stationnaire veps}
    - \operatorname{div}\left[\widetilde{A}\left(\frac{\cdot}{\eps}\right) \nabla v^\eps \right] = \nu^\eps \, \sigma\left(\frac{\cdot}{\eps}\right) \, \psi^2\left(\frac{\cdot}{\eps}\right) \, v^\eps \ \ \text{in $\Omega$}, \qquad v^\eps = 0 \ \ \text{on $\partial \Omega$},
  \end{equation}
  where $\widetilde{A}(y) = A(y) \, \psi^2(y)$ and $\dis \nu^\eps = \frac{\lambda^\eps - \lambda^\infty}{\eps^2}$.
\end{theorem}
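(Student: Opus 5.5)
This is the classical factorization principle (ground-state transform), and I would prove it by a direct computation in weak form. Write $\psi^\eps := \psi(\cdot/\eps)$, so that $u^\eps = \psi^\eps v^\eps$. By Theorem~\ref{thm:spectral psi} and the bounds~\eqref{eq:borne_psi}, $\psi^\eps$ is continuous, bounded above and bounded away from zero, and $\psi \in H^1_{\rm per}(Y)$. The plan has four steps.

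\textbf{Step 1 (regularity).} I first check that $v^\eps \in H^1_0(\Omega)$. Since $\psi$ is bounded below and $\nabla(1/\psi) = -\nabla\psi/\psi^2 \in L^2_{\rm loc}$, the function $1/\psi^\eps$ is only in $H^1\cap L^\infty$, not $W^{1,\infty}$. So the product rule for $u^\eps/\psi^\eps$ needs some care. I would use that both $u^\eps$ and $\psi^\eps$ are Hölder continuous, hence bounded (Theorems~\ref{thm:spectral} and~\ref{thm:spectral psi}). The product of two $H^1\cap L^\infty$ functions lies in $H^1$, with the usual Leibniz formula. This gives
\[
\nabla v^\eps = \frac{\nabla u^\eps}{\psi^\eps} - \frac{u^\eps\nabla\psi^\eps}{(\psi^\eps)^2}.
\]
The zero trace is inherited from $u^\eps$, because multiplying by the bounded $H^1$ function $1/\psi^\eps$ preserves $H^1_0$ for bounded functions (approximate $u^\eps$ by $C^\infty_c$ functions). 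This step is where I expect the only real technical obstacle. Everything else is algebra.

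\textbf{Step 2 (test the $u^\eps$ equation).} Take $\phi \in C^\infty_c(\Omega)$, or more generally $\phi \in H^1_0\cap L^\infty$. Test~\eqref{pb reaction diffusion stationnaire} against $\psi^\eps\phi$, which is admissible by Step~1. Substituting $\nabla u^\eps = \psi^\eps\nabla v^\eps + v^\eps\nabla\psi^\eps$ and $\nabla(\psi^\eps\phi) = \psi^\eps\nabla\phi + \phi\nabla\psi^\eps$ yields
\[
\eps^2\!\int A^\eps\nabla u^\eps\cdot\nabla(\psi^\eps\phi) = \eps^2\!\int (\psi^\eps)^2 A^\eps\nabla v^\eps\cdot\nabla\phi + \eps^2\!\int A^\eps\nabla\psi^\eps\cdot\big(\psi^\eps v^\eps\nabla\phi + \psi^\eps\phi\nabla v^\eps + v^\eps\phi\nabla\psi^\eps\big).
\]
Here I use the symmetry of $A$. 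The bracketed term equals $\eps^2\int A^\eps\nabla\psi^\eps\cdot\nabla(\psi^\eps v^\eps\phi)$.

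\textbf{Step 3 (use the cell problem).} Rescaling~\eqref{pb spectral} gives
\[
-\eps^2\operatorname{div}(A^\eps\nabla\psi^\eps) + \Sigma^\eps\psi^\eps = \lambda^\infty\sigma^\eps\psi^\eps \quad \text{in } \RR^d,
\]
weakly, against $H^1$ test functions with compact support. Testing it with $\psi^\eps v^\eps\phi \in H^1_0(\Omega)$, which is again justified by boundedness, turns the bracketed term into
\[
\int(\lambda^\infty\sigma^\eps - \Sigma^\eps)(\psi^\eps)^2 v^\eps\phi.
\]

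\textbf{Step 4 (conclude).} Inserting this into the weak form of the $u^\eps$ equation, the $\Sigma^\eps(\psi^\eps)^2v^\eps\phi$ terms cancel, and we are left with
\[
\eps^2\int \widetilde A(\cdot/\eps)\nabla v^\eps\cdot\nabla\phi = (\lambda^\eps-\lambda^\infty)\int \sigma^\eps(\psi^\eps)^2 v^\eps\phi.
\]
Dividing by $\eps^2$ gives the weak form of~\eqref{pb reaction diffusion stationnaire veps} with $\nu^\eps = (\lambda^\eps-\lambda^\infty)/\eps^2$. Finally, I extend from smooth $\phi$ to all of $H^1_0(\Omega)$ by density. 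This is legitimate since $\widetilde A = A\psi^2$ is bounded, and it is coercive thanks to $\psi \ge c_- > 0$.
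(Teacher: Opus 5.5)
Your argument is correct and is the standard factorization computation behind this result: test the $u^\eps$ equation with $\psi^\eps\phi$, then use the rescaled cell problem tested with $\psi^\eps v^\eps\phi$ so that the $\Sigma^\eps$ terms cancel. The paper gives no proof of its own and simply quotes the result from Allaire--Malige.

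The only point to tighten is the zero-trace claim in Step~1. Approximating $u^\eps$ in $H^1$ by $u_n\in C^\infty_c(\Omega)$ does not by itself control $u_n\nabla(1/\psi^\eps)$ in $L^2$. Instead, truncate each $u_n$ at level $M=\|u^\eps\|_{L^\infty(\Omega)}$. The compactly supported quotients by $\psi^\eps$ then stay bounded in $H^1$ and converge to $v^\eps$ in $L^2$, and you conclude by weak closedness of $H^1_0(\Omega)$.
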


The function $v^\eps$ is thus an eigenvector of a purely diffusive (generalized) eigenvalue problem. Note that, since we have taken $\psi$ to be the {\em first} eigenfunction of~\eqref{pb spectral}, the function $\psi$ does not vanish and we can thus properly define $v^\eps$. In the case when $u^\eps$ is the first eigenfunction of~\eqref{pb reaction diffusion stationnaire}, $v^\eps$ is the first eigenfunction of~\eqref{pb reaction diffusion stationnaire veps}.

The homogenized limit of~\eqref{pb reaction diffusion stationnaire veps} is easier to identify than that of~\eqref{pb reaction diffusion stationnaire}, since the former problem is a purely diffusive problem. In particular, it is easy to establish a priori bounds on~\eqref{pb reaction diffusion stationnaire veps}: the eigenvalue $\nu^\eps$ can be bounded using the min-max principle, while the eigenvector $v^\eps$ can be bounded in $H^1_0(\Omega)$ by standard energy estimates. This has been achieved in~\cite{allaireAnalyseAsymptotiqueSpectrale1997}.

Consider the $m$-th eigencouple $\left(\nu^{\eps,m},v^{\eps,m}\right)$ of~\eqref{pb reaction diffusion stationnaire veps}, where we have ordered the eigenvalues as $0 < \nu^{\eps,1} < \nu^{\eps,2} \leq \nu^{\eps,3} \leq \dots$ ($\nu^{\eps,m}$ is hence the $m$-th eigenvalue and $v^{\eps,m}$ is an associated eigenvector). When $\eps \to 0$, and up to a subsequence, $\nu^{\eps,m}$ converges to some $\nu^{\star,m}$, and $v^{\eps,m}$ converges (weakly in $H^1(\Omega)$) to some $v^{\star,m}$, where $\left(\nu^{\star,m},v^{\star,m}\right)$ is the $m$-th eigencouple of the homogenized problem
\begin{equation} \label{pb reaction diffusion homogeneise}
  - \operatorname{div} \left(\widetilde{A}^\star \, \nabla v^\star \right) = \nu^\star \, \sigma^\star \, v^\star \ \ \text{in $\Omega$}, \qquad v^\star = 0 \quad \text{on $\partial \Omega$}.
\end{equation}
More precisely, $\nu^{\star,m}$ is the $m$-th eigenvalue of~\eqref{pb reaction diffusion homogeneise} and $v^{\star,m}$ is an associated eigenvector. The homogenized coefficients are given by
\begin{equation*} 
  \sigma^\star = \int_Y \sigma \, \psi^2
\end{equation*}
and, for any $1 \leq i,j \leq d$,
\begin{equation*} \label{expression D etoile bis}
  [\widetilde{A}^\star]_{ij} = \int_Y \psi^2 \, e_i^T A \left(e_j + \nabla \widetilde{w}_j \right),
\end{equation*}
where the corrector function $\widetilde{w}_j$ satisfies the equation
\begin{equation} \label{eq correcteur}
  -\operatorname{div}_y \left[ \psi^2 \, A \left( e_j + \nabla_y \widetilde{w}_j \right) \right] = 0 \ \ \text{in $\RR^d$}, \qquad \text{$\widetilde{w}_j$ is $Y$-periodic}.
\end{equation}
The convergence of the eigenvectors holds up to a subsequence because of a possible multiplicity of the limit eigenvalue. If the limit eigenvalue is simple (which is for instance the case for the first one), then the whole sequence converges.

\medskip

In~\cite[Remark~6.2]{allaireHomogenizationSpectralProblem2000}, a strong $H^1_0(\Omega)$ convergence result is also stated, as a corollary of~\cite[Theorem~6.1]{allaireHomogenizationSpectralProblem2000}.

\begin{theorem}[From~\cite{allaireHomogenizationSpectralProblem2000}] \label{thm:strong convergence veps}
  Consider the assumptions~\eqref{eq:period} and~\eqref{hypothese coercivite D}. Let $v^\eps$ be the first eigenvector of~\eqref{pb reaction diffusion stationnaire veps} and $v^\star$ be the first eigenvector of~\eqref{pb reaction diffusion homogeneise}. Let $\dis v^{\eps,1} := v^\star + \eps \sum_{j=1}^d \widetilde{w}_j(\cdot/\eps) \, \partial_j v^\star$, where $\{ \widetilde{w}_j \}_{1\leq j \leq d}$ are the corrector functions defined by~\eqref{eq correcteur}. We assume that $v^\star \in H^1_0(\Omega) \cap H^2(\Omega)$. Then
  \begin{equation} \label{thm:strong convergence veps L2}
    R(\eps) := \| v^\eps - v^{\eps,1} \|_{L^2(\Omega)} \underset{\eps \to 0}{\longrightarrow} 0
  \end{equation}
  and
  \begin{equation} \label{thm:strong convergence veps H1}
    \widetilde{R}(\eps) := \| \nabla v^\eps - \nabla v^{\eps,1} \|_{L^2(\Omega)} \underset{\eps \to 0}{\longrightarrow} 0.
  \end{equation}
\end{theorem}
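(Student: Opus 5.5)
We compare $v^\eps$ with the two-scale expansion $v^{\eps,1}$ through the energy of the operator $L^\eps := -\operatorname{div}\bigl[\widetilde{A}(\cdot/\eps)\nabla \cdot\,\bigr]$. The classical corrector estimate for purely diffusive problems gives, for $f \in L^2(\Omega)$, the solution $z^\eps \in H^1_0(\Omega)$ of $L^\eps z^\eps = f$ and the solution $z^\star$ of the homogenized problem $-\operatorname{div}(\widetilde{A}^\star \nabla z^\star) = f$ with $z^\star \in H^2(\Omega)$:
\begin{equation*}
\bigl\| z^\eps - z^\star - \eps \sum_{j=1}^d \widetilde{w}_j(\cdot/\eps)\, \partial_j z^\star \bigr\|_{H^1(\Omega)} \underset{\eps\to 0}{\longrightarrow} 0 .
\end{equation*}
This is an $o(1)$ statement: $\widetilde w_j$ is only $H^1_{\rm per}$, with no $W^{1,\infty}$ regularity. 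The strategy is to reduce the eigenproblem to this source problem, with the specific right-hand side $f = \nu^\star \sigma^\star v^\star$.

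First, we use the convergences recalled above for the first eigencouple. Since $\nu^{\star,1}$ is simple, the whole sequence satisfies $\nu^\eps \to \nu^\star$. By the min-max principle and energy estimates, $v^\eps$ is bounded in $H^1_0(\Omega)$, so $v^\eps \rightharpoonup v^\star$ weakly in $H^1$ and strongly in $L^2(\Omega)$ (Rellich). The normalization and sign conventions fix the limit as the positive normalized $v^\star$.

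Next, we define $z^\eps$ as the solution of $L^\eps z^\eps = \nu^\star \sigma^\star v^\star$, and split
\begin{equation*}
v^\eps - v^{\eps,1} = (v^\eps - z^\eps) + \Bigl(z^\eps - v^\star - \eps\sum_j \widetilde w_j(\cdot/\eps)\,\partial_j v^\star\Bigr).
\end{equation*}
Since $z^\star = v^\star \in H^2(\Omega)$, the second term tends to $0$ in $H^1$ by the corrector estimate. The first term solves
\begin{equation*}
L^\eps(v^\eps - z^\eps) = \nu^\eps \sigma(\cdot/\eps)\,\psi^2(\cdot/\eps)\, v^\eps - \nu^\star \sigma^\star v^\star =: g^\eps .
\end{equation*}
Coercivity of $\widetilde A$ (since $\psi \ge c_- > 0$ by~\eqref{eq:borne_psi}) and Poincaré give $\|v^\eps - z^\eps\|_{H^1} \le C \|g^\eps\|_{H^{-1}}$.

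The main obstacle is to show $g^\eps \to 0$ strongly in $H^{-1}(\Omega)$. We write
\begin{equation*}
g^\eps = \nu^\eps\, \sigma\psi^2(\cdot/\eps)\,(v^\eps - v^\star) + (\nu^\eps - \nu^\star)\,\sigma\psi^2(\cdot/\eps)\, v^\star + \nu^\star\,\bigl(\sigma\psi^2(\cdot/\eps) - \sigma^\star\bigr)\, v^\star .
\end{equation*}
The first two terms tend to $0$ in $L^2(\Omega)$, using the strong $L^2$ convergence of $v^\eps$, the convergence of $\nu^\eps$, and the boundedness of $\sigma\psi^2$. The third term is an oscillating function with zero cell mean (since $\sigma^\star = \int_Y \sigma\psi^2$) multiplied by a fixed $L^2$ function. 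It converges weakly to $0$ in $L^2$, so by the compact embedding $L^2 \hookrightarrow H^{-1}$ it converges strongly to $0$ in $H^{-1}$. Therefore $\|v^\eps - z^\eps\|_{H^1} \to 0$.

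Combining the two terms gives $\widetilde R(\eps) \to 0$, and $R(\eps) \to 0$ follows from Poincaré's inequality (or directly from the $L^2$ convergence, since the corrector term is $O(\eps)$ in $L^2$). The only delicate ingredient is the classical $H^1$ corrector result for non-smooth correctors. If needed, it can be proved by the standard argument: approximate $\partial_j v^\star$ by smooth functions with compact support, then pass to the limit using two-scale convergence of $\nabla z^\eps$ and convergence of energies.
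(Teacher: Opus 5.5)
Your argument is correct in structure, and it takes a different route from the paper. The paper gives no proof: it quotes the result from \cite{allaireHomogenizationSpectralProblem2000}, where it appears as Remark~6.2, a corollary of Theorem~6.1 there. You instead give a self-contained reduction to a source problem. The homogenized solution of your auxiliary problem is exactly $v^\star$, so its two-scale expansion is exactly $v^{\eps,1}$. The eigenproblem-specific part then reduces to showing $\|g^\eps\|_{H^{-1}(\Omega)} \to 0$. You do this correctly, using only facts the paper already recalls (convergence of $\nu^\eps$, strong $L^2$ convergence of $v^\eps$) together with the compactness of $L^2(\Omega) \hookrightarrow H^{-1}(\Omega)$ for the mean-zero oscillating term. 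What this buys is modularity: any corrector estimate for the source problem transfers. Rates would transfer too, once $|\nu^\eps - \nu^\star|$ and $\|v^\eps - v^\star\|_{L^2(\Omega)}$ are quantified; the oscillating term is even $O(\eps)$ in $H^{-1}(\Omega)$, since $\sigma\psi^2 - \sigma^\star$ is the divergence of a bounded periodic field. The price is that everything now rests on the source-problem corrector estimate.

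That estimate is where you overstate things. In your density sketch, after replacing $\partial_j v^\star$ by smooth $\phi_j$, you still need $\limsup_{\eps \to 0} \| (\nabla_y \widetilde{w}_j)(\cdot/\eps)\,(\partial_j v^\star - \phi_j) \|_{L^2(\Omega)}$ to be small. This is immediate if $\nabla_y \widetilde{w}_j \in L^\infty$, which is what \eqref{eq:regul_w_psi} supplies when the paper later uses the theorem. For $\widetilde{w}_j$ merely in $H^1_{\rm per}(Y)$, however, it needs extra integrability, e.g.\ Meyers' estimate together with $H^1(\Omega) \subset L^q(\Omega)$ for all $q<\infty$ when $d=2$, or continuity of $\nabla v^\star$ up to $\partial\Omega$. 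Without something of this kind, it is not even clear that $\nabla v^{\eps,1} \in L^2(\Omega)$. So your remark that no $W^{1,\infty}$ regularity is needed should be replaced by such a hypothesis, which the statement tacitly makes anyway. Also, Poincar\'e's inequality does not apply to $v^\eps - v^{\eps,1}$, which does not vanish on $\partial\Omega$. You do not need it, though, since both pieces of your splitting are controlled in the full $H^1(\Omega)$ norm.
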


\subsection{Description of a preliminary method} \label{sec:MsFEM preliminaire}

We introduce a regular coarse mesh $\mathcal{T}_H$ of $\Omega$ consisting of triangles $K$ with characteristic size $H$ (see Figure~\ref{fig:mesh} below; in the following and to fix the ideas, we will have in mind the two-dimensional situation, although our approach can be extended to the three-dimensional situation). The mesh $\mathcal{T}_H$ is coarse in the sense that $H$ does not need to be smaller than $\eps$ (in practice, in our numerical experiments, we always take $H \geq \eps$). Let $N$ denote the number of interior vertices of the mesh $\mathcal{T}_H$, and $V_{\PP_1,H}$ the approximation space consisting of $\PP_1$ functions on the mesh $\mathcal{T}_H$. For any $1 \leq i \leq N$, we denote $\chi_i^{\PP_1}$ the $\PP_1$ function associated with the internal vertex $i$.

We seek the first eigencouple $(\lambda^\eps,u^\eps)$ of~\eqref{pb reaction diffusion stationnaire}, and we assume in this section, as mentioned above, to be in the periodic setting~\eqref{eq:period}. We have seen in Theorem~\ref{changement variable veps} that, with the change of unknown function $v^\eps = u^\eps/\psi(\cdot/\eps)$, the problem can be recast as a purely diffusive generalized eigenvalue problem: look for the first eigencouple $(\nu^\eps,v^\eps)$ of
\begin{equation} \label{pb veps periodic}
  \left\{
  \begin{aligned}
    - \operatorname{div}\left[ \psi^2\left(\frac{x}{\eps}\right) A\left(\frac{x}{\eps}\right) \nabla v^\eps \right] &= \nu^\eps \, \sigma\left(\frac{x}{\eps}\right) \, \psi^2\left(\frac{x}{\eps}\right) \, v^\eps \ \ \text{in $\Omega$},
    \\
    v^\eps &= 0 \ \ \text{on $\partial \Omega$}.
  \end{aligned}
  \right.
\end{equation}
Assume temporarily that our problem of interest is~\eqref{pb veps periodic}, and that we wish to solve it using an MsFEM method. Since this problem is a purely diffusive problem for the eigenvector $v^\eps$, we can use the space generated by the MsFEM basis functions $\left\{ \chi_i^{\eps,\psi} \right\}_{1 \leq i \leq N}$, solutions in $H^1_0(\Omega)$ to
\begin{equation} \label{chi psi}
  \forall K \in \mathcal{T}_H,
  \quad 
  \left\{
  \begin{aligned}
    -\operatorname{div}\left[ \psi^2\left(\frac{\cdot}{\eps}\right) A\left(\frac{\cdot}{\eps}\right) \nabla \chi_i^{\eps,\psi} \right] &= 0 && \text{in $K$},
    \\
    \chi_i^{\eps,\psi} &= \chi_i^{\PP_1} && \text{on $\partial K$}.
  \end{aligned}
  \right.
\end{equation}
These basis functions are called MsFEM-lin basis functions in the literature, in view of the specific boundary conditions in~\eqref{chi psi} (we refer to~\cite{houMultiscaleFiniteElement1997a} for a seminal introduction of the MsFEM approach). An MsFEM approach on~\eqref{pb veps periodic} thus consists in a Galerkin approximation of~\eqref{pb veps periodic} on the finite dimensional space
$$
V_{\eps,\psi,H}^{\rm interm} = \spn \left\{ \chi_1^{\eps,\psi}, \hdots, \chi_N^{\eps,\psi} \right\},
$$
where the superscript 'interm' stands for {\em intermediate}. We thus look for the first eigencouple $(\nu_H^{\eps,\psi},v_H^{\eps,\psi}) \in \RR \times V_{\eps,\psi,H}^{\rm interm}$ such that, for any $w \in V_{\eps,\psi,H}^{\rm interm}$,
\begin{equation*} 
\int_\Omega \psi^2\left(\frac{\cdot}{\eps}\right) (\nabla w)^T A\left(\frac{\cdot}{\eps}\right) \nabla v_H^{\eps,\psi} = \nu_H^{\eps,\psi} \int_\Omega \sigma\left(\frac{\cdot}{\eps}\right) \, \psi^2\left(\frac{\cdot}{\eps}\right) \, v_H^{\eps,\psi} \, w.
\end{equation*}

\begin{rmq}
  In practice, problems~\eqref{chi psi} need to be discretized, using for instance a fine triangular mesh of each element $K \in \mathcal{T}_H$, with a mesh of characteristic size $h \ll \eps$ (see Figure~\ref{fig:mesh}).
\end{rmq}

We now return to our problem of interest, which is, we recall, to look for the first eigencouple $(\lambda^\eps,u^\eps)$ of
\begin{equation} \label{pb reaction diffusion stationnaire periodic}
  \left\{
  \begin{aligned}
    \Sigma\left(\frac{x}{\eps}\right) \, u^\eps - \eps^2 \operatorname{div}\left(A\left(\frac{x}{\eps}\right) \nabla u^\eps \right) &= \lambda^\eps \, \sigma\left(\frac{x}{\eps}\right) \, u^\eps \ \ \text{in $\Omega$},
    \\
    u^\eps &= 0 \ \ \text{on $\partial \Omega$}.
  \end{aligned}
  \right.
\end{equation}
In view of the relation $u^\eps = \psi(\cdot/\eps) \, v^\eps$ and of the approximation procedure for $v^\eps$ presented above, it is natural to introduce the MsFEM basis functions $\left\{ \phi_i^{\eps,\psi} \right\}_{1 \leq i \leq N}$ defined by
\begin{equation} \label{eq:def_phi_prelim}
\forall 1 \leq i \leq N, \quad \phi_i^{\eps,\psi} = \psi(\cdot/\eps) \, \chi_i^{\eps,\psi},
\end{equation}
and the approximation space defined by
\begin{equation} \label{eq:def_V_eps_psi_H}
V_{\eps,\psi,H} = \spn \left\{ \phi_1^{\eps,\psi}, \hdots, \phi_N^{\eps,\psi} \right\}.
\end{equation}
The MsFEM approximation of~\eqref{pb reaction diffusion stationnaire periodic} consists in finding the first eigencouple $(\lambda_H^{\eps,\psi},u^{\eps,\psi}_H) \in \RR \times V_{\eps,\psi,H}$ such that, for any $w \in V_{\eps,\psi,H}$,
\begin{equation} \label{pb reaction diffusion stationnaire MsFEM ueps triche}
  \eps^2 \int_\Omega (\nabla w)^T A\left(\frac{x}{\eps}\right) \nabla u^{\eps,\psi}_H + \int_\Omega \Sigma\left(\frac{x}{\eps}\right) \, u^{\eps,\psi}_H \, w = \lambda_H^{\eps,\psi} \int_\Omega \sigma\left(\frac{x}{\eps}\right) \, u^{\eps,\psi}_H \, w,
\end{equation}
an approach that we denote the {\em preliminary MsFEM approach}. We establish below the following error bound between the solutions to~\eqref{pb reaction diffusion stationnaire MsFEM ueps triche} and~\eqref{pb reaction diffusion stationnaire periodic}:
$$
\frac{\| u^\eps - u^{\eps,\psi}_H \|_{H^1(\Omega)}}{\| u^\eps \|_{H^1(\Omega)}} \leq C \left( \eps + R(\eps) + \eps \, \sqrt{\frac{\eps}{H}} + H^2 \right),
$$
where the function $R(\eps)$ (which goes to 0 when $\eps \to 0$) is defined in~\eqref{thm:strong convergence veps L2}. We refer to Section~\ref{sec:demo MsFEM preliminaire} for a precise statement of this result (see Theorem~\ref{thm taux de convergence methode triche} there), along with its proof. We postpone the presentation of the numerical results obtained with the approach~\eqref{pb reaction diffusion stationnaire MsFEM ueps triche} to Section~\ref{sec:num_one_group}.

\subsection{Description of the actual MsFEM method} \label{sec:actual_msfem_general}

Our objective is to construct a numerical method that does not rely on the periodicity of the microstructure. In this general framework, the function $\psi$, that we explicitly used in the preliminary method, does not exist. To proceed, we thus need to construct a proxy, satisfying the following two requirements: (i) the construction of this proxy is not restricted to the case when $A^\eps$, $\Sigma^\eps$ and $\sigma^\eps$ are periodic, and (ii) in the periodic setting, this proxy is a reliable approximation of $\psi(\cdot/\eps)$. We then hope (and the numerical results presented in Section~\ref{sec:num_one_group} will confirm this) that this proxy leads to an efficient MsFEM method, both in periodic and non-periodic cases. The construction of the proxy is presented in Section~\ref{sec:approx_psi}. The resulting general MsFEM approach is next introduced in Section~\ref{sec:actual_msfem}.

\subsubsection{Construction of a proxy approximating $\psi(\cdot/\eps)$} \label{sec:approx_psi}

The approach we present here is based on a filtering technique introduced in~\cite{cances2005long}, and next used in the context of periodic homogenization in~\cite{blancImprovingComputationHomogenized2010}. Our approach is also based on oversampling, an idea introduced in the MsFEM context in~\cite{houMultiscaleFiniteElement1997a}.

On each element $K$ of the coarse mesh $\mathcal{T}_H$, we wish to construct a proxy, denoted $\widetilde{\psi}^\eps$ (to simplify the notation, we do not explicitly mention the fact that this proxy depends on $K$), that, in the periodic case, approximates the function $\psi(\cdot/\eps)|_K$. A first step consists in introducing a square-shaped oversampling patch (and more generally, an hypercube) $S_K$ around the element $K$, as shown in Figure~\ref{fig:patch_oversampling_comparison} below. A naive idea is then to consider the eigenvector with periodic boundary conditions $\widetilde{\psi}^{\eps,\#} \in H^1_{\rm per}(S_K)$ associated to the first eigenvalue $\lambda^{\eps,\#}$ of the following eigenvalue problem, posed on $S_K$: for any $v \in H^1_{\rm per}(S_K)$,
\begin{equation} \label{pb patch oversampling}
  \int_{S_K} \Sigma^\eps \, \widetilde{\psi}^{\eps,\#} \, v + \eps^2 \int_{S_K} (\nabla v)^T A^\eps \nabla \widetilde{\psi}^{\eps,\#} = \lambda^{\eps,\#} \int_{S_K} \sigma^\eps \, \widetilde{\psi}^{\eps,\#} \, v.
\end{equation}
Note that it is possible to impose periodic boundary conditions on the boundary of $S_K$ because this domain is a cube (and not a polyhedra, as is often the case in oversampling). In the case~\eqref{eq:period} when $A^\eps$, $\Sigma^\eps$ and $\sigma^\eps$ are periodic, if the size of $S_K$ is an integer multiple of the size of the periodic cell $\eps \, Y$, then it can be shown that $\widetilde{\psi}^{\eps,\#} = \psi(\cdot/\eps)$ on $S_K$. However, in the general case, $\widetilde{\psi}^{\eps,\#}$ and $\psi(\cdot/\eps)$ can be very different one from each other, as discussed in~\cite{allaire2012homogenization}. Following~\cite{blancImprovingComputationHomogenized2010}, we are going to use a filter function, in order to mitigate the fact that $S_K$ does not contain an integer number of periodic cells.

Let $\tau_0$ be a function such that, for some $k \in \NN^\star$ (in practice, we will take $k=1$ or 2),
\begin{equation} \label{hypotheses filtre varphi0}
  \left\{
  \begin{aligned}
    &\tau_0 \in C^{k+1}([0,1]), \qquad \tau_0 > 0 \ \ \text{in $(0,1)$}, \qquad \int_0^1 \tau_0 = 1,
    \\
    &\forall 0 \leq i \leq k-1, \ \ \tau_0^{(i)}(0) = \tau_0^{(i)}(1) = 0. 
  \end{aligned}
  \right.
\end{equation}
In dimension $d$, on the hypercube $S_K$, we then introduce the filter function $\tau_K$ defined by
\begin{equation} \label{hypotheses filtre varphi}
  \forall x \in S_K = \prod_{i=1}^d (a_i,b_i), \qquad \tau_K(x) = \frac{1}{|S_K|} \, \prod_{i=1}^d \tau_0\left(\frac{x_i - a_i}{b_i - a_i}\right),
\end{equation}
where $|S_K| = \prod_{i=1}^d (b_i - a_i)$ is the volume of $S_K$. This prefactor ensures that $\dis \int_{S_K} \tau_K = 1$.

The filtered variant of~\eqref{pb patch oversampling} is obtained as follows. Since $\lambda^{\eps,\#}$ is the smallest eigenvalue, Problem~\eqref{pb patch oversampling} can be recast as
$$
\inf \left\{ \int_{S_K} \Sigma^\eps \, \psi^2 + \eps^2 \int_{S_K} (\nabla \psi)^T A^\eps \nabla \psi, \quad \psi \in H^1_{\rm per}(S_K), \quad \int_{S_K} \sigma^\eps \, \psi^2 = 1 \right\}. 
$$
We then introduce the filter function $\tau_K$ in all the above integrals and consider
\begin{multline} \label{eq:filtered_var}
\inf \left\{ \int_{S_K} \tau_K \, \Sigma^\eps \, \psi^2 + \eps^2 \int_{S_K} \tau_K \, (\nabla \psi)^T A^\eps \nabla \psi, \right. \\ \left. \psi \in H^1(S_K), \quad \int_{S_K} \tau_K \, \nabla \psi = 0, \quad \int_{S_K} \tau_K \, \sigma^\eps \, \psi^2 = 1 \right\}. 
\end{multline}

\begin{rmq}
  The first constraint in~\eqref{eq:filtered_var} can easily be understood in the one-dimensional case. A function $\psi \in H^1(S_K)$ is periodic if and only if $\dis \int_{S_K} \psi' = 0$, a constraint that we modify in~\eqref{eq:filtered_var} by introducing the filter function $\tau_K$ in the integral.
\end{rmq}

\medskip

The Euler-Lagrange equation of~\eqref{eq:filtered_var} reads as follows: look for a Lagrange multiplier $\widetilde{\mu}^\eps \in \RR^d$ and the eigenvector $\widetilde{\psi}^\eps \in H^1(S_K)$ associated to the smallest eigenvalue $\widetilde{\lambda}^\eps \in \RR$ such that, for any $v \in H^1(S_K)$ and any $\mu \in \RR^d$,
\begin{equation} \label{pb patch oversampling filtre tmp}
  \left\{
  \begin{aligned}
    \eps^2 \int_{S_K} \tau_K \, (\nabla v)^T A^\eps \nabla \widetilde{\psi}^\eps + \int_{S_K} \tau_K \, \Sigma^\eps \, \widetilde{\psi}^\eps \, v
    &=
    \widetilde{\lambda}^\eps \int_{S_K} \tau_K \, \sigma^\eps \, \widetilde{\psi}^\eps \, v + \widetilde{\mu}^\eps \cdot \int_{S_K} \tau_K \, \nabla v,
    \\
    \mu \cdot \int_{S_K} \tau_K \, \nabla \widetilde{\psi}^\eps
    &=
    0.
  \end{aligned}
  \right.
\end{equation}
We refer to Section~\ref{sec:demo small perturbations} for some elements of analysis of this method, along with some numerical results. In short, we observe in Section~\ref{sec:demo small perturbations} that $\widetilde{\lambda}^\eps$ is a converging approximation (when $\eps \to 0$) of the eigenvalue $\lambda^\infty$ of~\eqref{pb spectral}, and that, on a domain interior to $S_K$ (and in particular on the element $K$ itself), the eigenvector $\widetilde{\psi}^\eps$ is also a converging approximation of $\psi(\cdot/\eps)$, provided a filter of order $k \geq 2$ is used. 

\subsubsection{Our MsFEM approach} \label{sec:actual_msfem}

We now describe the actual MsFEM method that we propose, which is based on a combination of the preliminary method described in Section~\ref{sec:MsFEM preliminaire} and of the filtering ideas presented in Section~\ref{sec:approx_psi}.

\medskip

As mentioned above, for each element $K$ of the mesh $\mathcal{T}_H$, we construct a square-shaped oversampling patch $S_K$ centered around the element $K$, as shown on Figure~\ref{fig:patch_oversampling_comparison}. The size of $S_K$ is defined by an oversampling ratio $\rho$ which is the ratio between the size of the patch and the size of the element $K$. We typically take $S_K$ to be a cube with edges of length $2H$.

\begin{figure}[htbp]
  \centering
  \begin{subfigure}[t]{0.45\textwidth}
    \centering
    \tikzset{every picture/.style={line width=0.75pt, scale=1.2}} 

    \begin{tikzpicture}[x=0.75pt,y=0.75pt,yscale=-1,xscale=1]
      
      \draw   (191,80.96) -- (350,80.96) -- (350,240.96) -- (191,240.96) -- cycle ;
      \draw [color={rgb, 255:red, 155; green, 155; blue, 155 }  ,draw opacity=1 ]   (191,240.96) -- (350,80.96) ;
      \draw [color={rgb, 255:red, 155; green, 155; blue, 155 }  ,draw opacity=1 ]   (191,200.36) -- (350,200.37) ;
      \draw [color={rgb, 255:red, 155; green, 155; blue, 155 }  ,draw opacity=1 ]   (191,120.36) -- (350,120.37) ;
      \draw [color={rgb, 255:red, 155; green, 155; blue, 155 }  ,draw opacity=1 ]   (191,160.96) -- (350,160.96) ;
      \draw [color={rgb, 255:red, 155; green, 155; blue, 155 }  ,draw opacity=1 ]   (230.2,240.96) -- (230.2,80.96) ;
      \draw [color={rgb, 255:red, 155; green, 155; blue, 155 }  ,draw opacity=1 ]   (310.2,240.96) -- (310.2,80.96) ;
      \draw [color={rgb, 255:red, 155; green, 155; blue, 155 }  ,draw opacity=1 ]   (270.5,240.96) -- (270.5,80.96) ;
      \draw   (191,80.96) -- (350,80.96) -- (350,240.96) -- (191,240.96) -- cycle ;
      \draw [color={rgb, 255:red, 155; green, 155; blue, 155 }  ,draw opacity=1 ]   (191,160.96) -- (270.5,80.96) ;
      \draw [color={rgb, 255:red, 155; green, 155; blue, 155 }  ,draw opacity=1 ]   (270.5,240.96) -- (350,160.96) ;
      \draw [color={rgb, 255:red, 155; green, 155; blue, 155 }  ,draw opacity=1 ]   (190.45,120.96) -- (230.2,80.96) ;
      \draw [color={rgb, 255:red, 155; green, 155; blue, 155 }  ,draw opacity=1 ]   (310.2,240.96) -- (349.95,200.96) ;
      \draw [color={rgb, 255:red, 155; green, 155; blue, 155 }  ,draw opacity=1 ]   (191,200.37) -- (310.2,80.96) ;
      \draw [color={rgb, 255:red, 155; green, 155; blue, 155 }  ,draw opacity=1 ]   (230.2,240.96) -- (349.4,121.56) ;
      \draw  [fill={rgb, 255:red, 74; green, 144; blue, 226 }  ,fill opacity=1 ] (253.07,109.55) -- (327.27,109.55) -- (327.27,173.51) -- (253.07,173.51) -- cycle ;
      \draw  [fill={rgb, 255:red, 255; green, 255; blue, 255 }  ,fill opacity=1 ] (270.5,160.96) -- (310.02,121.55) -- (310.29,160.68) -- cycle ;
      \draw   (270.41,161.24) -- (309.93,121.82) -- (310.2,160.96) -- cycle ;
      \draw (292,142) node [anchor=north west][inner sep=0.75pt]    {$\textcolor[rgb]{0.82,0.01,0.11}{\textbf K}$};
      \draw (258.07,114.55) node [anchor=north west][inner sep=0.75pt]  [color={rgb, 255:red, 208; green, 2; blue, 27 }  ,opacity=1 ]  {\LARGE $\textcolor[rgb]{0,0.9,0.2}{\textbf S_{\textcolor[rgb]{0,0.9,0.2}{\textbf K}}}$};
      \draw (350,219) node [anchor=north west][inner sep=0.75pt]    {$\partial \Omega$};
    \end{tikzpicture}
    \caption{Interior element}
  \end{subfigure}
  \hfill
  \begin{subfigure}[t]{0.45\textwidth}
    \centering
    \tikzset{every picture/.style={line width=0.75pt, scale=1.2}} 
    
    \begin{tikzpicture}[x=0.75pt,y=0.75pt,yscale=-1,xscale=1]
      
      \draw   (191,80.96) -- (350,80.96) -- (350,240.96) -- (191,240.96) -- cycle ;
      \draw [color={rgb, 255:red, 155; green, 155; blue, 155 }  ,draw opacity=1 ]   (191,240.96) -- (350,80.96) ;
      \draw [color={rgb, 255:red, 155; green, 155; blue, 155 }  ,draw opacity=1 ]   (191,200.36) -- (350,200.37) ;
      \draw [color={rgb, 255:red, 155; green, 155; blue, 155 }  ,draw opacity=1 ]   (191,120.36) -- (350,120.37) ;
      \draw [color={rgb, 255:red, 155; green, 155; blue, 155 }  ,draw opacity=1 ]   (191,160.96) -- (350,160.96) ;
      \draw [color={rgb, 255:red, 155; green, 155; blue, 155 }  ,draw opacity=1 ]   (230.2,240.96) -- (230.2,80.96) ;
      \draw [color={rgb, 255:red, 155; green, 155; blue, 155 }  ,draw opacity=1 ]   (310.2,240.96) -- (310.2,80.96) ;
      \draw [color={rgb, 255:red, 155; green, 155; blue, 155 }  ,draw opacity=1 ]   (270.5,240.96) -- (270.5,80.96) ;
      \draw   (191,80.96) -- (350,80.96) -- (350,240.96) -- (191,240.96) -- cycle ;
      \draw [color={rgb, 255:red, 155; green, 155; blue, 155 }  ,draw opacity=1 ]   (191,160.96) -- (270.5,80.96) ;
      \draw [color={rgb, 255:red, 155; green, 155; blue, 155 }  ,draw opacity=1 ]   (270.5,240.96) -- (350,160.96) ;
      \draw [color={rgb, 255:red, 155; green, 155; blue, 155 }  ,draw opacity=1 ]   (190.45,120.96) -- (230.2,80.96) ;
      \draw [color={rgb, 255:red, 155; green, 155; blue, 155 }  ,draw opacity=1 ]   (310.2,240.96) -- (349.95,200.96) ;
      \draw [color={rgb, 255:red, 155; green, 155; blue, 155 }  ,draw opacity=1 ]   (191,200.37) -- (310.2,80.96) ;
      \draw [color={rgb, 255:red, 155; green, 155; blue, 155 }  ,draw opacity=1 ]   (230.2,240.96) -- (349.4,121.56) ;
      \draw  [fill={rgb, 255:red, 74; green, 144; blue, 226 }  ,fill opacity=1 ] (295,80.96) -- (350,80.96) -- (350,135) -- (295,135) -- cycle ;
      \draw  [fill={rgb, 255:red, 255; green, 255; blue, 255 }  ,fill opacity=1 ] (310.2,121.55) -- (350,80.96) -- (350,121.55) -- cycle ;
      \draw  [fill={rgb, 255:red, 225; green, 60; blue, 60 }  ,fill opacity=0.3 ] (350,135) -- (367,135) -- (367,64) -- (295,64) -- (295,80.96) -- (350,80.96) --cycle ;
      \draw   (310.2,121.55) -- (350,80.96) -- (350,121.55) -- cycle ;
      \draw (332,102) node [anchor=north west][inner sep=0.75pt]    {$\textcolor[rgb]{0.82,0.01,0.11}{\textbf K}$};
      \draw (298.07,86.55) node [anchor=north west][inner sep=0.75pt]  [color={rgb, 255:red, 208; green, 2; blue, 27 }  ,opacity=1 ]  {\LARGE $\textcolor[rgb]{0,0.9,0.2}{\textbf S_{\textcolor[rgb]{0,0.9,0.2}{\textbf K}}}$};
      \draw (350,219) node [anchor=north west][inner sep=0.75pt]    {$\partial \Omega $};
    \end{tikzpicture}
    \caption{Boundary element}
  \end{subfigure}
  \caption{Oversampling patch $S_K$ for an element $K$ in the interior of $\Omega$ (left) or at the boundary of $\Omega$ (right)}
  \label{fig:patch_oversampling_comparison}
\end{figure}

\medskip

We then proceed as follows:
\begin{description}
\item[$\bullet$ Step~1 (offline):] Construction of a proxy.
\end{description}
For each element $K$ of the coarse mesh $\mathcal{T}_H$, we solve~\eqref{pb patch oversampling filtre tmp} on the patch $S_K$ (using, in practice, a fine mesh), and denote here $(\widetilde{\mu}^\eps_{S_K},\widetilde{\lambda}^\eps_{S_K},\widetilde{\psi}_{S_K}^\eps) \in \RR^d \times \RR \times H^1(S_K)$ its solution. On each element $K \in \mathcal{T}_H$, we then define $\widetilde{\psi}^\eps_K := \widetilde{\psi}_{S_K}^\eps|_K$ as a proxy of the function $\psi(\cdot/\eps)$.

\begin{description}
\item[$\bullet$ Step~2 (offline):] Computation of the MsFEM-lin basis functions.
\end{description}
Inspired by~\eqref{chi psi}, we define the functions $\left\{ \chi_i^\eps \right\}_{1 \leq i \leq N}$ as the solutions in $H^1_0(\Omega)$ to
\begin{equation} \label{chi phi}
  \forall K \in \mathcal{T}_H,
  \quad 
  \left\{
  \begin{aligned}
    -\operatorname{div}\left( (\widetilde{\psi}^\eps_K)^2 \, A^\eps \nabla \chi_i^\eps \right) &= 0 && \text{in $K$},
    \\
    \chi_i^\eps &= \chi_i^{\PP_1} &&\text{on $\partial K$}.
  \end{aligned}
  \right.
\end{equation}
Inspired by~\eqref{eq:def_phi_prelim}, we next define the MsFEM basis functions $\left\{ \phi_i^\eps \right\}_{1 \leq i \leq N}$ on $\Omega$ as
\begin{equation*}
  \forall K \in \mathcal{T}_H, \qquad \phi_i^\eps = \chi_i^\eps \ \widetilde{\psi}^\eps_K \ \ \text{in $K$},
\end{equation*}
and we finally consider the approximation space
\begin{equation} \label{MsFEM espace 1 groupe}
  V_{\eps,H} = \spn \left\{ \phi_1^\eps, \hdots, \phi_N^\eps \right\}.
\end{equation}
Note that the function $\widetilde{\psi}^\eps$ may jump at the boundary between one element $K$ and its neighbour. In general, and as is classical for methods using oversampling, the space $V_{\eps,H}$ is hence not a subspace of $H^1(\Omega)$.

\begin{description}
\item[$\bullet$ Step~3 (online):] Solution to the global problem.
\end{description}
We perform a Galerkin approximation of~\eqref{pb reaction diffusion stationnaire} on the space~\eqref{MsFEM espace 1 groupe} and look for the first eigencouple $(\lambda_H^\eps,u^\eps_H) \in \RR \times V_{\eps,H}$ such that, for any $w \in V_{\eps,H}$,
\begin{equation} \label{pb reaction diffusion stationnaire MsFEM 1 groupe ueps}
  \eps^2 \sum_{K \in \mathcal{T}_H} \int_K (\nabla w)^T A^\eps \nabla u^\eps_H + \int_\Omega \Sigma^\eps \, u^\eps_H \, w = \lambda^\eps_H \int_\Omega \sigma^\eps \, u^\eps_H \, w.
\end{equation}
The first integral is written as a broken sum since the discretization is non-conforming.

\subsection{Numerical results} \label{sec:num_one_group}

We now present several numerical experiments illustrating the efficiency of our MsFEM approach. All the computations have been performed with FreeFEM~\cite{hechtNewDevelopmentFreeFem2012} and the associated scripts are available at~\cite{code_alberic}. We only consider here two-dimensional test-cases, although, as mentioned above, our approach can in principle be extended to the three-dimensional setting.

We consider two types of coefficients: periodic coefficients and quasi-periodic coefficients (as an example of non-periodic coefficients). In dimension 2 on the domain $\Omega = (0,1)^2$, we consider a coarse triangular mesh $\mathcal{T}_H$ with $H = 1/8$ (unless otherwise stated). Each element $K$ of this coarse mesh is itself meshed with a fine triangular mesh $\mathcal{T}_h$ with $h \ll \eps$, as illustrated in Figure~\ref{fig:mesh}.

\begin{figure}[htbp]
  \centering
  \begin{tikzpicture}[scale=1.1]
    
    \foreach \x in {0,0.2,...,4}
    \draw[teal!40!white, thin] (\x,0) -- (\x,4);
    \foreach \y in {0,0.2,...,4}
    \draw[teal!40!white, thin] (0,\y) -- (4,\y);
    
    \foreach \i in {0,...,19}
    \foreach \j in {0,...,19}
    \draw[teal!40!white, thin] (0.2*\i,0.2*\j) -- (0.2*\i+0.2,0.2*\j+0.2);
    
    \draw[red!70!black, very thick] (0,0) -- (0,4) -- (4,4) -- (4,0) -- cycle;
    \draw[red!70!black, very thick] (0,2) -- (4,2);
    \draw[red!70!black, very thick] (2,0) -- (2,4);
    
    \draw[red!70!black, very thick] (0,0) -- (2,2);
    \draw[red!70!black, very thick] (2,0) -- (4,2);
    \draw[red!70!black, very thick] (0,2) -- (2,4);
    \draw[red!70!black, very thick] (2,2) -- (4,4);
    
    \node at (2,4.25) {\Large Domain $\Omega$};
    
    \draw[<->, thick] (0,-0.2) -- (2,-0.2);
    \node at (1,-0.5) {\Large $\displaystyle H,\ \textcolor{red!70!black}{\mathcal{T}_H}$};
    
    \draw[<->, thick] (3,-0.2) -- (3.2,-0.2);
    \node at (3.15,-0.5) {\Large $\displaystyle h,\ \textcolor{teal!80!white}{\mathcal{T}_h}$};
        
  \end{tikzpicture}
  \caption{Schematic representation of the domain $\Omega$, the coarse mesh $\mathcal{T}_H$ (in red) and the fine mesh $\mathcal{T}_h$ (in green).}
  \label{fig:mesh}
\end{figure}

A reference solution is computed as the first eigencouple $(\lambda^\eps,u^\eps)$ of~\eqref{pb reaction diffusion stationnaire} on a fine mesh $\mathcal{T}_h$ of the whole domain $\Omega$ (with $h \ll \eps$), using a $\PP_1$-finite element method. We then compute the MsFEM first eigencouple $(\lambda^\eps_H,u^\eps_H)$, solution to~\eqref{pb reaction diffusion stationnaire MsFEM 1 groupe ueps}. Throughout these numerical results, the oversampling ratio defined above is set at $\rho = 2$, and the filter function $\tau_0$ is chosen as $\tau_0(x) = C \, x^2 \, (1-x)^2$ for some constant $C$ such that $\| \tau_0 \|_{L^1(0,1)} = 1$ (this corresponds to a filter of order $k=2$ in the sense of~\eqref{hypotheses filtre varphi0}).

For the scalar-valued variant of the problem considered here, we restrict our tests to the case when $\sigma^\eps = 1$ (more general cases of $\sigma^\eps$ are considered in the vector-valued variant of the problem that we consider in Section~\ref{sec:MsFEM multigroup}). For the sake of comparison, we also compute the first eigencouple obtained by the classical $\PP_1$-method on the coarse mesh $\mathcal{T}_H$. In the periodic test cases, we also compute the first eigencouple obtained by the preliminary method described in Section~\ref{sec:MsFEM preliminaire} (which represents an ideal result we are actually aiming at with our actual MsFEM method). We denote generically by $(\lambda^\eps_H, u^\eps_H)$ the first eigencouple obtained by the MsFEM-method, the preliminary method, or the $\PP_1$-method on the coarse mesh $\mathcal{T}_H$.

The relative error on the eigenvalue is defined by
\begin{equation} \label{erreur valeur propre relative}
  \frac{| \lambda^\eps - \lambda^\eps_H |}{| \lambda^\eps |}. 
\end{equation}
Since the approximation space~\eqref{MsFEM espace 1 groupe} is non-conforming, the error on the eigenvector is defined using the $H^1$ broken norm as
\begin{equation} \label{erreur vecteur propre relative}
  \frac{\sqrt{\sum_{K \in \mathcal{T}_H} \| u^\eps - u^\eps_H \|^2_{H^1(K)}} }{\| u^\eps \|_{H^1(\Omega)}}.
\end{equation}

\subsubsection{Periodic case} \label{sec:num_periodic}

We consider the diffusion and reaction coefficients defined in the periodic cell $Y$ as follows: for any $(y_1,y_2) \in Y$,
\begin{equation} \label{coefficients periodic 1 groupe}
  \begin{aligned}
    A(y_1,y_2) &= \Big[ 6 + 5 \cos\big(2\pi(y_1 + 2y_2)\big) \, \sin\big(2\pi(y_1 - y_2)\big) \Big] \, \text{Id}_2,
    \\[3pt]
    \Sigma(y_1,y_2) &= 20 \Big[ 2 + \cos\big(2\pi(y_1 - 2y_2)\big) \, \sin\big(2\pi(y_1 - y_2)\big) \Big],
  \end{aligned}
\end{equation}
where $\text{Id}_2$ denotes the $2 \times 2$ identity matrix. The corresponding oscillatory coefficients are defined on $\Omega$ by~\eqref{eq:period}.

\begin{rmq} \label{rk:contrast}
  The choice of the periodic coefficients~\eqref{coefficients periodic 1 groupe}, and of the quasi-periodic coefficients~\eqref{coefficients quasi periodic 1 groupe_A}--\eqref{coefficients quasi periodic 1 groupe_S} below, has been made to ensure that the relative $H^1$ error~\eqref{erreur vecteur propre relative} produced by the $\PP_1$ method, in the case when $\eps = H = 1/8$, is of the order of 50\%. This allows for a meaningful comparison of results obtained with different coefficient structures, all exhibiting a similar level of ``difficulty''.
\end{rmq}

The results shown on Figures~\ref{fig:ErreurH1_Penche_avec_filtre_filtre_puissance_1} and~\ref{fig:ErreurVP_Penche_avec_filtre_filtre_puissance_1} are obtained by fixing $H$ (at its value $H=1/8$) and varying $\eps$. The errors are plotted as a function of the ratio $2H/\eps$, which measures how large the oversampling domain $S_K$ is (it is a square with edges of length $2H$) compared to the size of the periodic cell (when $2H/\eps = 1$, the oversampling domain $S_K$ contains exactly one periodic cell). Our aim, by monitoring the results as a function of $\eps$, is to investigate the robustness of the approaches with respect to the presence of small scales in the problem.

As explained in Remark~\ref{rk:contrast}, the classical $\PP_1$ method yields a relative $H^1$ error of 50\% for $\eps = H$, and, as expected, larger errors for smaller values of $\eps$ (until reaching an error of 100\% for $\eps \leq H/2$). This is a pragmatic indication that the problem is indeed a difficult, multiscale problem. The preliminary MsFEM approach yields very accurate results: as soon as $\eps \leq H/2$ (i.e. as soon as $S_K$ indeed contains a few periodic cells), the error~\eqref{erreur vecteur propre relative} is smaller than 20\%, which may be considered, for difficult multiscale problems, as an acceptable level of accuracy (recall also that, for our problem, $u^\eps$ is bounded in the $L^2$ norm but not in the $H^1$ norm, which is a sign that this type of problems is more difficult than the more classical purely diffusive problems). The actual MsFEM approach provides results, the accuracy of which is very similar to those of the preliminary method: this is an indirect indication that our proxy is indeed a reliable approximation of $\psi(\cdot/\eps)$, a fact that we have also checked directly (we refer to Section~\ref{sec:num_filtre} for some results in that vein).

The conclusions for the approximation of the eigenvalue (see Figure~\ref{fig:ErreurVP_Penche_avec_filtre_filtre_puissance_1}) are similar, our actual MsFEM approach providing results very close to those of the preliminary method, and with a relative error of the order (or smaller) than $10^{-3}$ (i.e. 0.1\%).

These results demonstrate the robustness of our MsFEM approach with respect to the value of the characteristic size $\eps$ of the small scales.

\begin{figure}[htpb]
  \centering
  \includegraphics[width=0.9\textwidth]{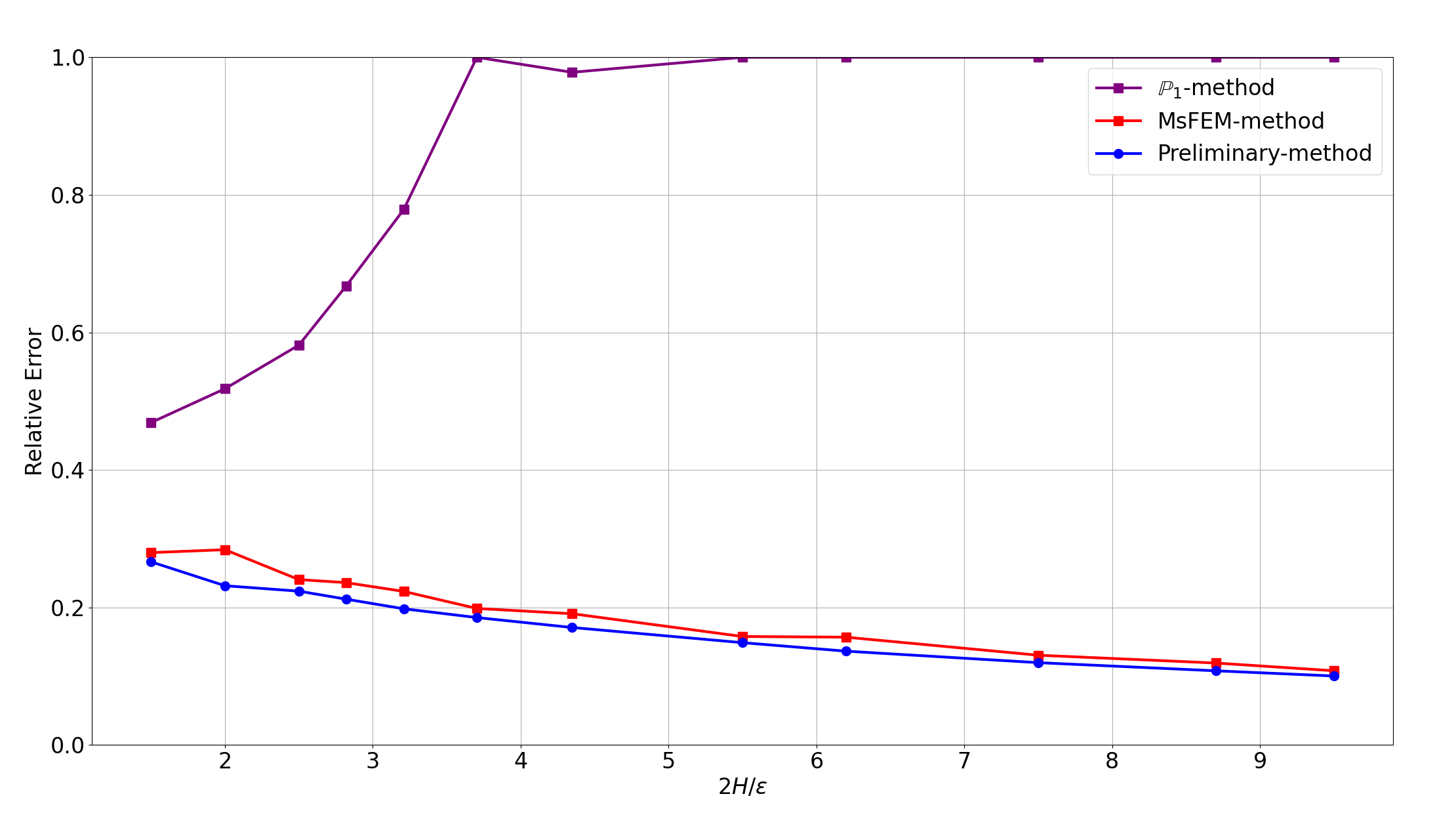}
  \caption{Periodic case~\eqref{coefficients periodic 1 groupe}: relative error~\eqref{erreur vecteur propre relative} on the eigenvector for the MsFEM method, the preliminary method and the $\PP_1$-method, as a function of $\eps$ ($H = 1/8$ fixed; note that the abscissa, here and in many figures below, is $2H/\eps$).}
  \label{fig:ErreurH1_Penche_avec_filtre_filtre_puissance_1}
\end{figure}

\begin{figure}[htbp]
  \centering
  \includegraphics[width=0.9\textwidth]{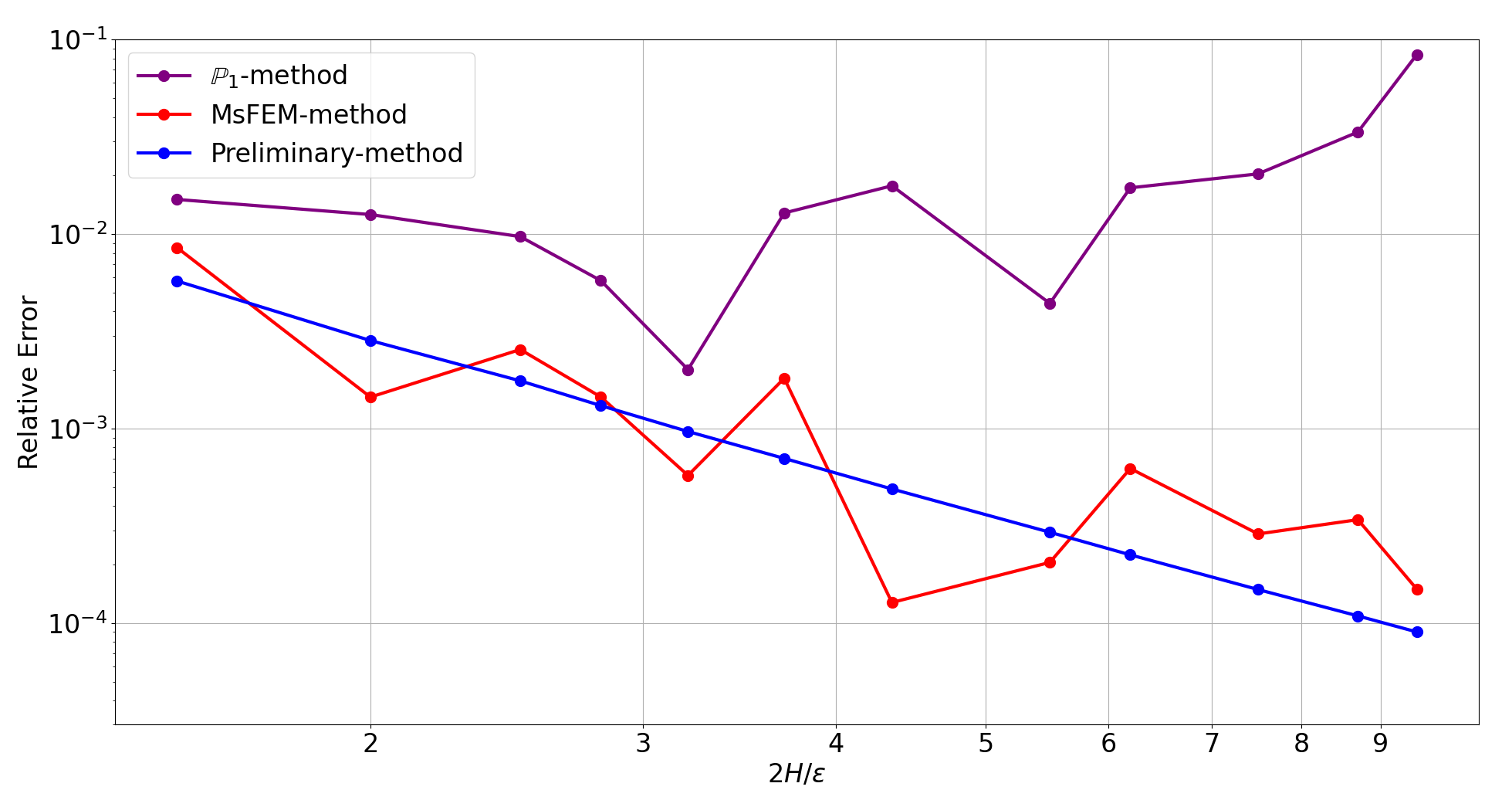}
  \caption{Periodic case~\eqref{coefficients periodic 1 groupe}: relative error~\eqref{erreur valeur propre relative} on the eigenvalue for the MsFEM method, the preliminary method and the $\PP_1$-method, as a function of $\eps$ ($H = 1/8$ fixed, $\operatorname{log}-\operatorname{log}$ scale).}
  \label{fig:ErreurVP_Penche_avec_filtre_filtre_puissance_1}
\end{figure}

\subsubsection{Quasi-periodic case}

We now consider the oscillatory diffusion and reaction coefficients defined on $\Omega$ as follows: for any $(x_1,x_2) \in \Omega$,
\begin{multline} \label{coefficients quasi periodic 1 groupe_A}
  A^\eps(x_1,x_2) = \left( 5 + 1.25 \left[ \cos\left(\frac{2\pi x_1}{\eps}\right) + \cos\left(\frac{2\sqrt{2}\pi x_1}{\eps}\right) \right] \right. \\ \left. \times \left[ \sin\left(\frac{2\pi x_2}{\eps}\right) + \sin\left(\frac{2\sqrt{2}\pi x_2}{\eps}\right) \right] \right) \text{Id}_2
\end{multline}
and
\begin{multline} \label{coefficients quasi periodic 1 groupe_S}
  \Sigma^\eps(x_1,x_2) = 40 \left( 2 + 0.25 \left[ \cos\left(\frac{2\pi x_1}{\eps}\right) + \cos\left(\frac{2\sqrt{2}\pi x_1}{\eps}\right) \right] \right. \\ \left. \times \left[ \sin\left(\frac{2\pi x_2}{\eps}\right) + \sin\left(\frac{2\sqrt{2}\pi x_2}{\eps}\right) \right] \right).
\end{multline}
These coefficients are quasi-periodic, since they are expressed as a sum of periodic functions with periods $\eps$ and $\eps/\sqrt{2}$.

\medskip

Fixing $H=1/8$ and varying $\eps$, we obtain the results shown on Figures~\ref{fig:ErreurH1_QP} and~\ref{fig:ErreurVP_QP}. We again obtain accurate results (with an error of the order of 20\% for the eigenvector, and smaller than 0.1\% for the eigenvalue). We also notice that the errors are of the same order as for the periodic case considered in Section~\ref{sec:num_periodic}: despite the fact that the microstructure of the problem is more complex, our method yields comparable errors. Again, this demonstrates its robustness.

\begin{figure}[htbp]
  \centering
  \includegraphics[width=0.9\textwidth]{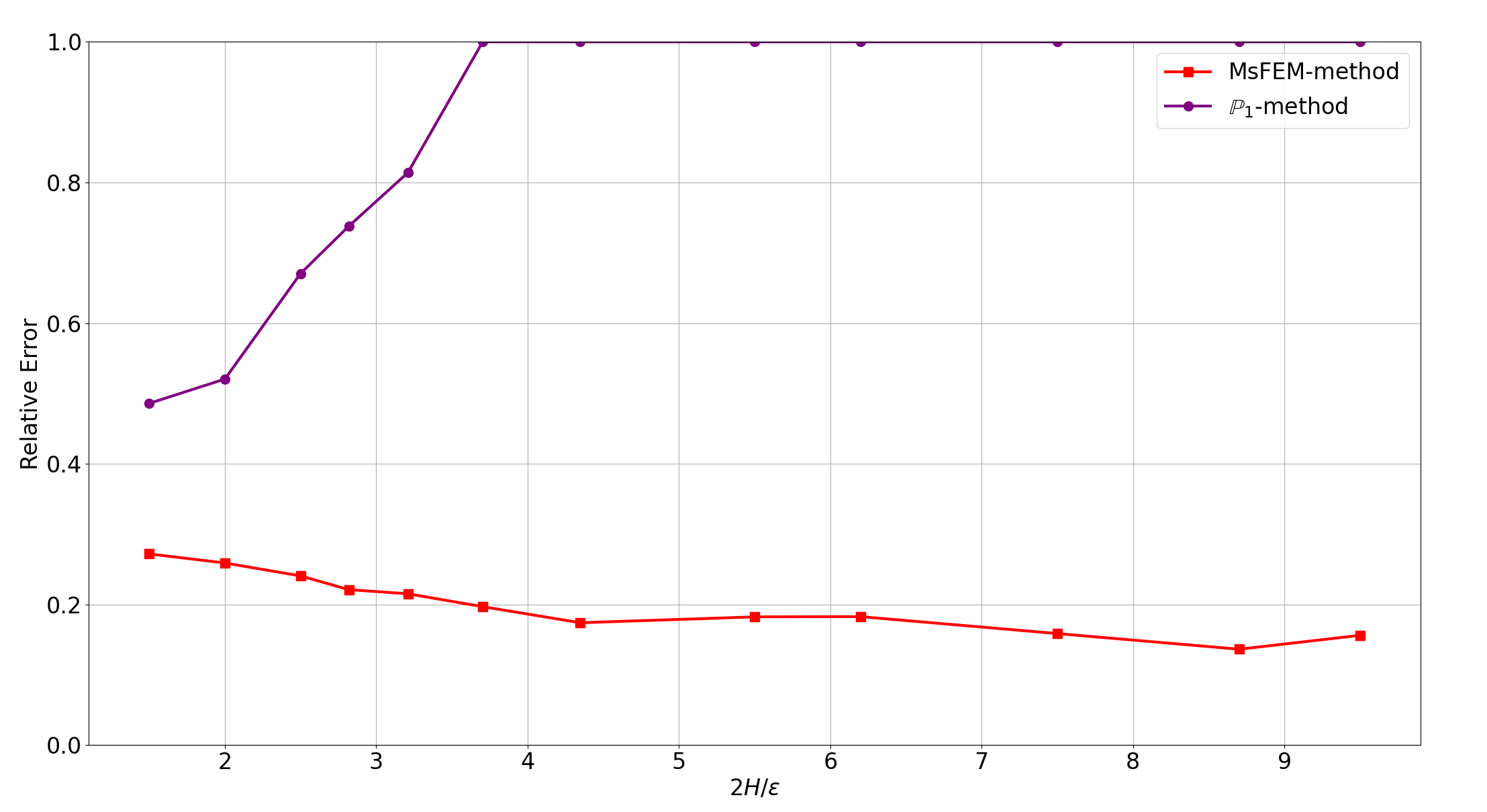}
  \caption{Quasi-periodic case~\eqref{coefficients quasi periodic 1 groupe_A}--\eqref{coefficients quasi periodic 1 groupe_S}: relative error~\eqref{erreur vecteur propre relative} on the eigenvector for the MsFEM method and the $\PP_1$-method, as a function of $\eps$ ($H = 1/8$ fixed).}
  \label{fig:ErreurH1_QP}
\end{figure}

\begin{figure}[htbp]
  \centering
  \includegraphics[width=0.9\textwidth]{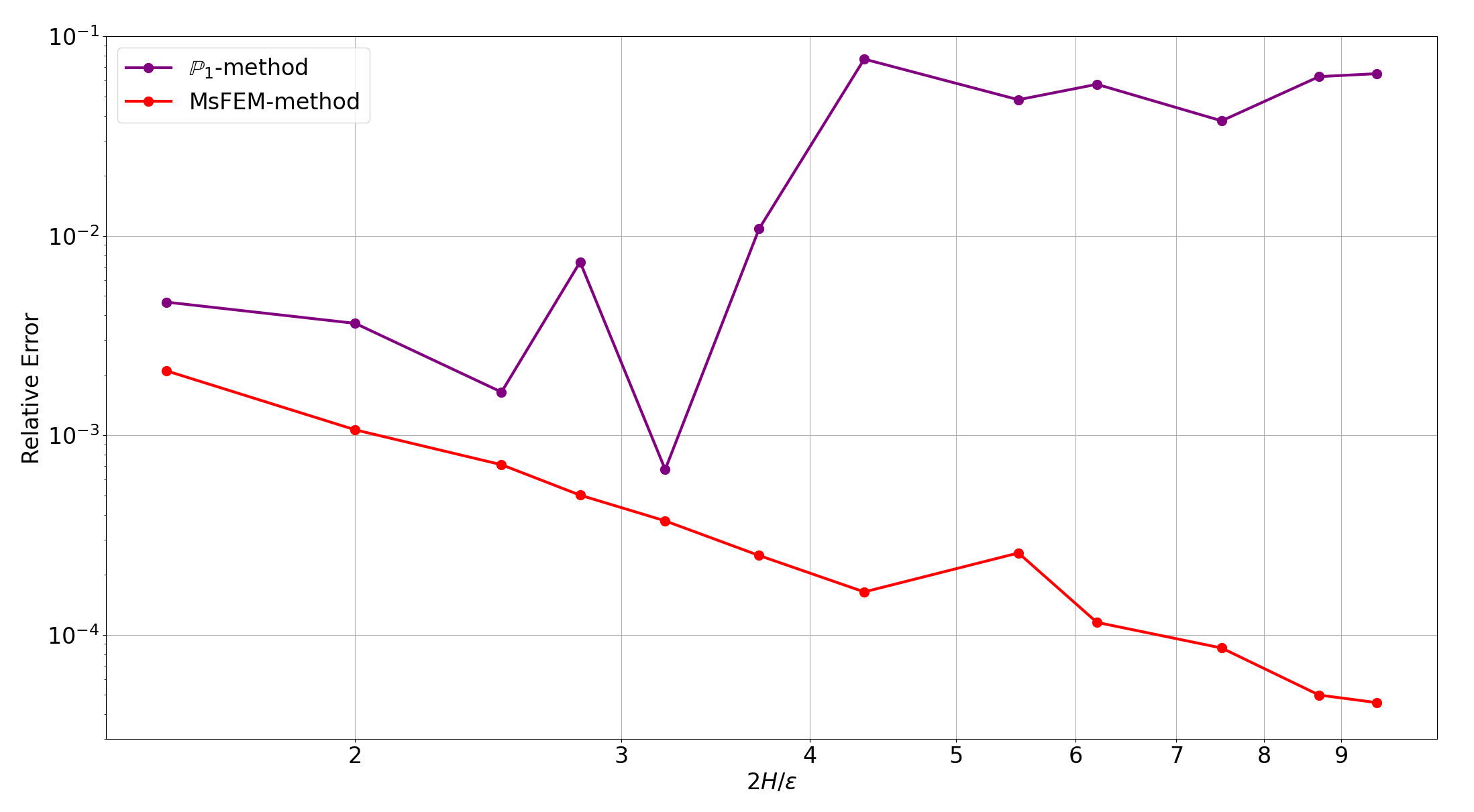}
  \caption{Quasi-periodic case~\eqref{coefficients quasi periodic 1 groupe_A}--\eqref{coefficients quasi periodic 1 groupe_S}: relative error~\eqref{erreur valeur propre relative} on the eigenvalue, for the MsFEM method and the $\PP_1$-method, as a function of $\eps$ ($H = 1/8$ fixed, $\operatorname{log}-\operatorname{log}$ scale).}
  \label{fig:ErreurVP_QP}
\end{figure}

\medskip

In practice, $\eps$ is typically small but fixed, while only the size of the coarse mesh $H$ can be adjusted. By decreasing $H$, one can reduce the error provided by the MsFEM method, both in the approximation of the first eigenvector and in the approximation of the corresponding eigenvalue. To illustrate this, we fix $\eps = 1/30$ and vary $H$, which yields the results in Table~\ref{tab:ErreurH1_VP_QP_H}.

For both the eigenvector and the eigenvalue approximations, we observe that the relative error decreases as the coarse mesh size $H$ decreases (while remaining smaller than 20\% for the eigenvector and smaller than 0.02\% for the eigenvalue). However, the accuracy deteriorates when the coarse mesh size $H$ becomes comparable to $\eps$. This loss of accuracy can be attributed to two main reasons:
\begin{itemize}
  \item A sufficient number of oscillations is required within the oversampling patch $S_K$ in order for the proxy $\widetilde{\psi}^\eps_K$, computed on $S_K$ using the filtering technique, to be reliable. The estimate~\eqref{estimee convergence vecteur propre}, established in the periodic case in a perturbative regime, indeed shows that $\eps$ must be small compared to the size of the computational domain for $\widetilde{\psi}^\eps_K$ to accurately approximate the function $\psi(\cdot/\eps)$. More details can be found in Section~\ref{sec:demo small perturbations}.
  \item The presence of the term $\eps \, \sqrt{\eps/H}$ in the estimate~\eqref{th taux de convergence methode triche} of the accuracy of the preliminary method. A loss of accuracy is numerically observed for that preliminary method when $H$ becomes of the same order as $\eps$ (results not shown), and it is thus not unexpected that the actual method shows the same behaviour when $H$ becomes comparable to $\eps$.
\end{itemize}

\begin{table}[htbp]
  \begin{center}
    \begin{tabular}{|c||c|c|c||c|}
      \hline
      Value of $H$ & 1/4 & 1/8 & 1/16 & 1/32 \\
      \hline
      Relative error~\eqref{erreur vecteur propre relative} & 0.220 & 0.126 & 0.112 & 0.233 \\
      Relative error~\eqref{erreur valeur propre relative} & $2.13 \times 10^{-4}$ & $5.85 \times 10^{-5}$ & $1.72 \times 10^{-6}$ & $8.70 \times 10^{-5}$ \\
      \hline
    \end{tabular}
    \caption{Quasi-periodic case~\eqref{coefficients quasi periodic 1 groupe_A}--\eqref{coefficients quasi periodic 1 groupe_S}: relative errors~\eqref{erreur vecteur propre relative} on the eigenvector and~\eqref{erreur valeur propre relative} on the eigenvalue for the MsFEM method, as a function of $H$ ($\eps = 1/30$ fixed). All values of $H$ are larger than $\eps$, except the smallest one, which is just below $\eps$ (thus the double vertical bar before the last column). When using the $\PP_1$-method, the relative error~\eqref{erreur vecteur propre relative} is equal to 1 (thus an error of 100\%) for the four values of $H$, while the relative error~\eqref{erreur valeur propre relative} is of the order of $4 \times 10^{-2}$ for the largest three values of $H$, and of the order of $10^{-2}$ for the smallest value of $H$: for all values of $H$, the $\PP_1$ method provides unreliable results.}
    \label{tab:ErreurH1_VP_QP_H}
  \end{center}
\end{table}

\section{The vector-valued case} \label{sec:MsFEM multigroup}

We now turn to the case where the problem of interest is vector-valued, and adopt the notations of~\cite{allaireHomogenizationSpectralProblem2000}. As pointed out in the introduction, we recall that this case is very relevant from the application viewpoint. From a mathematical viewpoint, the problem is in general non self-adjoint, and thus more challenging.

We thus consider~\eqref{pb reaction diffusion stationnaire} where $u^\eps$ is a vector of size $\kappa$, $\Sigma^\eps$ and $\sigma^\eps$ are $\kappa \times \kappa$ matrices, and $A^\eps$ is a fourth-order tensor. We assume that $A^\eps$ is block diagonal, so that
\begin{equation} \label{eq: A diagonal bloc}
A^\eps \nabla u^\eps = \big( A_1^\eps \nabla u^\eps_1, \hdots, A_\kappa^\eps \nabla u^\eps_\kappa \big)^T \in \RR^{\kappa \times d}, 
\end{equation}
where, for any $1 \leq k \leq \kappa$, $A_k^\eps$ is a symmetric $d \times d$ matrix, and $u^\eps_k$ is the $k$-th component of the vector $u^\eps$. Problem~\eqref{pb reaction diffusion stationnaire} hence reads: for any $1 \leq k \leq \kappa$,
$$
\sum_{\ell=1}^\kappa \Sigma^\eps_{k,\ell} \, u^\eps_\ell - \eps^2 \operatorname{div} \left(A^\eps_k \nabla u^\eps_k \right) = \lambda^\eps \sum_{\ell=1}^\kappa \sigma^\eps_{k,\ell} \, u^\eps_\ell \ \ \text{in $\Omega$}, \qquad u^\eps_k = 0 \ \ \text{on $\partial \Omega$}.
$$
The coupling between the components of $u^\eps$ comes from the fact that $\Sigma^\eps$ and/or $\sigma^\eps$ are non-diagonal matrices. Note also that $\Sigma^\eps$ and $\sigma^\eps$ are not supposed to be symmetric.

We assume $A^\eps$ to be bounded and coercive, in the sense that $A^\eps$ belongs to $L^\infty(\Omega)$ and satisfies the following bounds: there exists $\beta \geq \alpha > 0$ such that, for any $\eps$, any $1 \leq k \leq \kappa$ and any $\xi,\eta \in \RR^d$, we have
\begin{equation} \label{hypothese coercivite Aeps_vecteur}
  \alpha \, |\xi|^2 \leq \xi^T A^\eps_k(x) \, \xi \qquad \text{and} \qquad | \eta^T A^\eps_k(x) \, \xi | \leq \beta \, |\eta| \, |\xi| \qquad \text{a.e. in $\Omega$}. 
\end{equation}
We also assume $\Sigma^\eps$ and $\sigma^\eps$ to be bounded on $\Omega$.

\medskip

We first recall in Section~\ref{sec:homogenization results multigroup} below the periodic homogenization results established in~\cite{allaireHomogenizationSpectralProblem2000} on the vector-valued variant of~\eqref{pb reaction diffusion stationnaire}. Relaxing the periodicity assumption, we next present a MsFEM method in Section~\ref{sec:MsFEM multigroup description}, and collect the obtained numerical results in Section~\ref{sec:num_vector-valued}.

\subsection{Periodic homogenization results} \label{sec:homogenization results multigroup}

We assume here the coefficients to be periodic and hence to satisfy Assumption~\eqref{eq:period} for some $Y$-periodic coefficients $A$, $\Sigma$ and $\sigma$, which are furthermore assumed to belong to $L^\infty(Y)$. We assume $A$ to be block-diagonal in the sense of~\eqref{eq: A diagonal bloc}, and coercive in the sense of~\eqref{hypothese coercivite Aeps_vecteur}. Moreover, we suppose that
\begin{equation} \label{hypotheses sigma}
  \begin{aligned}
    &\forall 1 \leq k,\ell \leq \kappa, \ k \neq \ell, \qquad \Sigma_{k,k} > 0, \qquad \Sigma_{k,k-1} < 0, \qquad \Sigma_{k,\ell} \leq 0,
    \\[3pt]
    &\forall 1 \leq k,\ell \leq \kappa, \qquad \sigma_{1,\kappa} > 0, \qquad \sigma_{k,\ell} \geq 0,
    \\
    &\forall 1 \leq k \leq \kappa, \qquad \sum_{\ell=1}^\kappa \Sigma_{k,\ell} > 0.
  \end{aligned}
\end{equation}
The generalization to the vector-valued case of Theorems~\ref{thm:spectral} and~\ref{thm:spectral psi} can respectively be found in~\cite[Theorem~2.3 and Corollary~2.5]{allaireHomogenizationSpectralProblem2000}. We recall them in Theorems~\ref{thm:spectral multigroup} and~\ref{thm:spectral psi multigroup} below.

\begin{theorem}[Theorem~2.3 of~\cite{allaireHomogenizationSpectralProblem2000}] \label{thm:spectral multigroup}
  Under Assumptions~\eqref{eq:period}, \eqref{eq: A diagonal bloc}, \eqref{hypothese coercivite Aeps_vecteur} and~\eqref{hypotheses sigma}, Problem~\eqref{pb reaction diffusion stationnaire} admits a countable number of eigenvalues (possibly complex), with associated eigenvectors in $[H^1_0(\Omega)]^\kappa$. In addition, the first eigenvalue of~\eqref{pb reaction diffusion stationnaire} (i.e. the smallest in modulus) is real and simple, and the corresponding eigenfunction has real-valued components and can be chosen such that all its components are positive on $\Omega$.
\end{theorem}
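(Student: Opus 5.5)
The plan is to recast the eigenproblem as a fixed-point problem for a compact, positive (order-preserving) operator on a cone, and then to apply the Krein--Rutman theorem in its strong form.

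\textbf{Step 1: solution operator.} Fix $\eps$ and let $\Lambda>0$ be large. Define $T:[L^2(\Omega)]^\kappa\to[L^2(\Omega)]^\kappa$ by $Tf=u$, where $u\in[H^1_0(\Omega)]^\kappa$ solves
$$(\Sigma^\eps+\Lambda\,\mathrm{I})\,u-\eps^2\operatorname{div}(A^\eps\nabla u)=f.$$
Since $A^\eps$ is block diagonal and coercive, and $\Sigma^\eps$ is bounded, the bilinear form is coercive on $[H^1_0(\Omega)]^\kappa$ for $\Lambda\geq\|\Sigma^\eps\|_\infty$. Lax--Milgram then gives a bounded $T$ with values in $H^1_0$, and Rellich's theorem makes $T$ compact on $L^2$.

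The eigenproblem $(\Sigma^\eps-\eps^2\operatorname{div}A^\eps\nabla)u=\lambda\sigma^\eps u$ is equivalent to $u=T\big((\lambda\sigma^\eps+\Lambda)u\big)$. To avoid the $\lambda$-dependent shift, I would use instead the operator $S:=T_0\,\sigma^\eps$, where $T_0$ is the unshifted solution operator. Invertibility of the unshifted operator comes from the last line of~\eqref{hypotheses sigma}: $\Sigma$ is an M-matrix type operator with positive row sums. Eigenvalues of~\eqref{pb reaction diffusion stationnaire} are then exactly the $\lambda$ with $1/\lambda$ an eigenvalue of $S$. The spectrum of $S$ is countable with $0$ as its only accumulation point, which yields the countable (possibly complex) eigenvalues with eigenvectors in $H^1_0$. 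The smallest $|\lambda|$ corresponds to the spectral radius of $S$.

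\textbf{Step 2: positivity.} I would show that $T_0$ is positive, meaning $f\geq0$ componentwise implies $u\geq0$. The route is to test with $u^-=(u_k^-)_k$. The off-diagonal terms satisfy $\Sigma_{k,\ell}\leq0$, so $-\Sigma_{k\ell}u_\ell u_k^-$ has a favourable sign after splitting $u_\ell=u_\ell^+-u_\ell^-$. The row-sum condition then controls the coupled term $\sum\Sigma_{k\ell}u_k^-u_\ell^-$: it is dominated by the diagonal via a weighted or Gershgorin-type estimate, possibly after a constant diagonal rescaling. Hence $u^-=0$. Combined with $\sigma\geq0$, $S$ is positive on the cone $K$ of componentwise nonnegative functions. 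Since $T_0$ is compact and positive, the weak Krein--Rutman theorem gives that $r(S)$ is an eigenvalue with a nonnegative eigenvector, provided $r(S)>0$.

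\textbf{Step 3: strong positivity, the main obstacle.} To get simplicity and strict positivity I need some power of $S$ to be strongly positive, i.e.\ irreducibility. For a nonnegative $f\neq0$, the strong maximum principle (Harnack) applied to each scalar equation $-\eps^2\operatorname{div}(A_k\nabla u_k)+\Sigma_{kk}u_k=f_k-\sum_{\ell\neq k}\Sigma_{k\ell}u_\ell\geq0$ gives: if the right-hand side is nonzero then $u_k>0$ in $\Omega$.

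Positivity then propagates around a cycle of groups. The conditions $\Sigma_{k,k-1}<0$ transfer positivity from group $k-1$ to group $k$. The condition $\sigma_{1,\kappa}>0$ transfers it from group $\kappa$ back to group 1. So after at most $\kappa$ applications of $S$, every component of $S^{m}f$ is positive, and in fact bounded below by $c\,\mathrm{dist}(x,\partial\Omega)$ via Hopf's lemma. Regularity is needed here: De Giorgi--Nash gives Hölder continuity and Harnack gives interior positivity. For the Krein--Rutman strong form, I would work in the cone $\{u\geq0\}$ of $C_0(\overline\Omega)$, or use the $u_0$-positivity (Krasnosel'skii) version, which avoids needing nonempty interior.

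The strong form then gives: $r(S)>0$ is a simple eigenvalue, its eigenvector has all components positive, and every other eigenvalue has strictly smaller modulus. Translating back through $\lambda=1/\mu$, the first eigenvalue of~\eqref{pb reaction diffusion stationnaire} is real, simple, has strictly smallest modulus, and has a positive real eigenvector.
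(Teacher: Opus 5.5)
Your strategy is the natural one: a compact order-preserving solution operator, the cooperative structure of $\Sigma$, interior Harnack plus the cycle $1\to2\to\cdots\to\kappa\to1$ for irreducibility, then Krein--Rutman. The paper itself only quotes the result from \cite{allaireHomogenizationSpectralProblem2000}. However, two steps fail as written.

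\textbf{Step 3 is false for $S=T_0\sigma^\eps$.} Because $\sigma$ acts \emph{before} $T_0$, and the hypotheses only impose $\sigma_{1,\kappa}>0$, $S$ can annihilate nonzero nonnegative functions. Take $\kappa=2$, $\sigma=\begin{pmatrix}0&1\\0&0\end{pmatrix}$ and $\Sigma=\begin{pmatrix}2&0\\-1&2\end{pmatrix}$, which satisfy all assumptions. Then every $f=(f_1,0)$ with $f_1\geq0$ has $\sigma f=0$, so $S^mf=0$ for all $m$. Hence no power of $S$ is strongly positive, $u_0$-positive or even irreducible. The strong Krein--Rutman theorem, which you need for simplicity and for the \emph{strict} minimality of $|\lambda|$, therefore cannot be applied to $S$.

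The propagation must go through $T_0$ first. Work instead with $P=\sigma T_0$, which has the same nonzero eigenvalues with the same algebraic multiplicities; eigenvectors are recovered as $u=T_0g$. For $g\geq0$, $g\neq0$, the chain of implications is:
\begin{itemize}
\item Harnack and $\Sigma_{k,k-1}<0$ give $(T_0g)_\kappa>0$;
\item hence $(Pg)_1\geq\sigma_{1,\kappa}(T_0g)_\kappa>0$;
\item hence $T_0Pg>0$ in every component;
\item hence $(P^2g)_k>0$ a.e.\ wherever the $k$-th row of $\sigma$ does not vanish.
\end{itemize}
So $P^2$ is positivity improving on the closed band $\mathcal B$ containing the range of $P$. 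The $L^2$ Banach-lattice version of Krein--Rutman (Jentzsch/de Pagter type), applied to $P^2|_{\mathcal B}$, then gives $r(P)>0$, algebraic simplicity and an a.e.\ positive eigenvector. It also gives strict dominance: an eigenvalue $\mu$ of $P$ with $|\mu|=r$ forces $\mu^2=r^2$, and $-r$ is excluded because the $r^2$-eigenspace of $P^2$ is one-dimensional. This version only needs interior Harnack. By contrast, the positive cone of $C_0(\overline\Omega)$ you suggest has empty interior, and Hopf's lemma is not available for $L^\infty$ coefficients.

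\textbf{Step 2 and the definition of $T_0$.} Testing with $u^-$ yields $\sum_k\eps^2\int_\Omega A_k^\eps\nabla u_k^-\cdot\nabla u_k^-+\int_\Omega (u^-)^T\Sigma^\eps u^-\leq0$. But positive row sums do not make the quadratic form of $\Sigma$ nonnegative on nonnegative vectors: $\Sigma=\begin{pmatrix}1&0\\-10&11\end{pmatrix}$ satisfies the hypotheses, yet $\xi=(5,2)$ gives $\xi^T\Sigma\xi=-31$. Gershgorin bounds eigenvalues, not this form. A constant diagonal weight $D$ making $D\Sigma(y)$ copositive for all $y$ need not exist when $\Sigma$ varies over $Y$: use the above matrix on part of $Y$ and $\begin{pmatrix}11&-10\\-\eta&1+\eta\end{pmatrix}$ with small $\eta>0$ elsewhere. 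The $\eps^2$-diffusion cannot be counted on to absorb the deficit. For the same reason the unshifted form is not coercive, so Lax--Milgram does not give $T_0$ either.

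What does work is that $\mathbf 1$ is a strict supersolution, since $\Sigma\mathbf 1>0$. Let $w=-u$ be bounded with $Lw\leq0$, and suppose $M=\max_\ell\operatorname{ess\,sup}w_\ell>0$ is attained by component $k$. Test equation $k$ with $(w_k-t)^+$ for $t<M$ close to $M$. On its support, $\sum_\ell\Sigma_{k\ell}w_\ell\geq M\sum_\ell\Sigma_{k\ell}-\Sigma_{kk}(M-t)>0$, which gives a contradiction. This weak maximum principle, combined with De Giorgi--Nash--Moser boundedness, gives injectivity. The Fredholm alternative, applied through the coercive shifted operator, then gives invertibility, and positivity of $T_0$ follows by density.
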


We next define two cell eigenvectors $\psi$ and $\psi^\star$ (each of which is a vector of size $\kappa$): $\psi$ is the first eigenvector of the spectral problem
\begin{equation} \label{pb spectral 2 groupes}
  \Sigma \, \psi - \operatorname{div} \left(A \nabla \psi \right) = \lambda^\infty \, \sigma \, \psi \ \ \text{in $\RR^d$}, \qquad \text{$\psi$ is $Y$-periodic},
\end{equation}
while $\psi^\star$ is the first eigenvector of the adjoint spectral problem
\begin{equation} \label{pb spectral adjoint 2 groupes}
  \Sigma^\star \, \psi^\star - \operatorname{div} \left(A \nabla \psi^\star \right) = \lambda^\infty \, \sigma^\star \, \psi^\star \ \ \text{in $\RR^d$}, \qquad \text{$\psi^\star$ is $Y$-periodic},
\end{equation}
where $\Sigma^\star$ (resp. $\sigma^\star$) is the adjoint (or transpose) matrix of $\Sigma$ (resp. $\sigma$). Recall that $A$ is block-diagonal and that each $A_k$ is symmetric, so this part of the equation is self-adjoint.

\begin{theorem}[Corollary~2.5 of~\cite{allaireHomogenizationSpectralProblem2000}] \label{thm:spectral psi multigroup}
  Under Assumptions~\eqref{eq: A diagonal bloc}, \eqref{hypothese coercivite Aeps_vecteur} and~\eqref{hypotheses sigma}, Problems~\eqref{pb spectral 2 groupes} and~\eqref{pb spectral adjoint 2 groupes} admit a common first (i.e. smallest in modulus) eigenvalue $\lambda^\infty$. This first eigenvalue is real and simple, and the corresponding first eigenfunctions $\psi$ and $\psi^\star$ have all their components in $H^1_{\rm per}(Y)$ and real-valued. In addition, $\psi$ and $\psi^\star$ can be chosen such that all their components are positive on $Y$.
\end{theorem}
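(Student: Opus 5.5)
We argue by a Krein--Rutman (Perron--Frobenius) argument on the periodic cell, applied to the solution operator of the cell problem, and then use a duality argument to relate the direct and adjoint problems.

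\textbf{Step 1: a compact positive operator.} Let $\mathbb{T} = \RR^d/\mathbb{Z}^d$ denote the torus associated with $Y$. We first shift the operator so that it becomes invertible. Set $L\psi = \Sigma\psi - \operatorname{div}(A\nabla\psi)$ on $[H^1_{\rm per}(Y)]^\kappa$. By the last line of~\eqref{hypotheses sigma}, the matrix $\Sigma$ has positive diagonal, nonpositive off-diagonal entries and positive row sums. Hence, for $M>0$ large enough, $L+M\sigma$ (or simply $L$ itself) is a coercive, weakly coupled cooperative system. Since $\Sigma$ is bounded, adding $M\,{\rm Id}$ makes the bilinear form coercive on $[H^1_{\rm per}]^\kappa$ by~\eqref{hypothese coercivite Aeps_vecteur}.

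We then consider $T = (L+M)^{-1}(\sigma + M\,{\rm Id})$, or, more conveniently, $T=(L+M)^{-1}$ composed with multiplication by $\sigma+M{\rm Id}$. This operator acts on $[C^0(\mathbb{T})]^\kappa$ (regularity follows from De Giorgi--Nash estimates applied componentwise), and it is compact by Rellich. Eigenvalues $\lambda$ of~\eqref{pb spectral 2 groupes} correspond to eigenvalues $1/(\lambda+M)$ of $T$. The same construction, with $\Sigma^\star$ and $\sigma^\star$, gives the adjoint operator $T^\star$, which is the $L^2$-adjoint of $T$ up to the multiplication weight.

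\textbf{Step 2: strong positivity.} The key point is that $T$ maps nonnegative nonzero vectors to vectors with all components strictly positive. Since the off-diagonal entries of $\Sigma$ are $\le 0$, the weak maximum principle for cooperative systems shows that $(L+M)^{-1}$ preserves the cone of componentwise nonnegative functions. Indeed, testing with the negative parts $\psi_k^-$ and summing over $k$ yields $\psi^-=0$.

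Strict positivity then comes from irreducibility. For a single component, the strong maximum principle (Harnack inequality) shows that $f_k\ge0$, $f_k\not\equiv0$ gives $\psi_k>0$ everywhere. The coupling $\Sigma_{k,k-1}<0$ propagates positivity from component $k-1$ to component $k$, and $\sigma_{1,\kappa}>0$ closes the cycle from $\kappa$ back to $1$. Together these form an irreducible chain. Consequently, for some power (say $T^\kappa$), any nonnegative nonzero input produces all components strictly positive. This requires $\Sigma_{k,k-1}$ and $\sigma_{1,\kappa}$ to be bounded away from zero on a set of positive measure, which we read the hypotheses as granting.

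\textbf{Step 3: Krein--Rutman and the shared eigenvalue.} By the Krein--Rutman theorem, the spectral radius $r(T)>0$ is a simple eigenvalue of $T$, with an eigenvector in the interior of the cone (all components positive), and every other eigenvalue $\mu$ satisfies $|\mu|<r(T)$. Translating back, $\lambda^\infty = 1/r(T) - M$ is real and simple, with positive eigenvector $\psi$. We must also check that $\lambda^\infty$ is the smallest eigenvalue in modulus for the original problem and not merely after the shift. For this we take $M$ so that the shift is compatible (or work directly with $L^{-1}\sigma$, which is legitimate since the positive row sums make $L$ invertible and positivity preserving). We expect this to be routine.

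Applying the same argument to $T^\star$ gives a positive $\psi^\star$ with eigenvalue $\lambda^{\infty,\star}$. To show that the two eigenvalues coincide, test~\eqref{pb spectral 2 groupes} against $\psi^\star$ and~\eqref{pb spectral adjoint 2 groupes} against $\psi$, then subtract:
\[
(\lambda^\infty-\lambda^{\infty,\star})\int_Y \sigma\psi\cdot\psi^\star = 0 .
\]
Since $\sigma\ge0$ has $\sigma_{1,\kappa}>0$ and $\psi,\psi^\star>0$, the integral is positive, so $\lambda^\infty=\lambda^{\infty,\star}$. Finally, we may take the eigenvectors real, since $T$ is real and the eigenvalue is simple.

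\textbf{Main obstacle.} We expect the main obstacle to be Step 2, namely establishing strong positivity (irreducibility) for the coupled system with only $L^\infty$ coefficients. Our first attempt will be to prove it component by component via the scalar strong maximum principle, following the cyclic chain $1\to2\to\dots\to\kappa\to1$ given by~\eqref{hypotheses sigma}.
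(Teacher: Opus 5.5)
The paper gives no proof of this statement: it is quoted from Corollary~2.5 of \cite{allaireHomogenizationSpectralProblem2000}, where it rests on a Krein--Rutman argument in which the downscattering/fission cycle provides irreducibility. Your overall route is therefore the expected one. As written, however, the argument does not close, and the trouble starts with the operator in Step~1.

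\textbf{The operator.} $T=(L+M)^{-1}(\sigma+M\,\mathrm{Id})$ does not encode $L\psi=\lambda\sigma\psi$. Indeed, $T\phi=\mu\phi$ means $\mu L\phi=(\sigma+(1-\mu)M)\phi$, which is a different eigenproblem as soon as $\sigma$ is not a constant multiple of the identity. So the Perron eigenvector of $T$ is not $\psi$, and even for $\kappa=1$, $\sigma\equiv1$ its eigenvalues are $(1+M)/(\lambda+M)$, not $1/(\lambda+M)$. The shift compatible with the problem would be $L+M\sigma$. But $\sigma$ is typically singular (its second row vanishes in the paper's test case), and $M\sigma$ creates positive off-diagonal entries, so $L+M\sigma$ is neither coercive nor cooperative. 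You are therefore forced onto $T=L^{-1}\sigma$ with the unshifted $L$. That is your parenthetical fallback, and it is also the only version in which ``smallest in modulus'' transfers directly. Step~2 must then be redone for this operator.

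\textbf{Three points then fail.}

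(i) Positivity and invertibility of $L$ must come from the row-sum hypothesis, and testing with $\psi_k^-$ and summing over $k$ does not use it. You only obtain $\sum_k\int_Y A_k\nabla\psi_k^-\cdot\nabla\psi_k^-+\int_Y(\psi^-)^T\Sigma\,\psi^-\le0$. This forces $\psi^-=0$ only if $v^T\Sigma v>0$ for all $v\ge0$, and positive row sums do not give that. For example, with $\Sigma=\begin{pmatrix}1&0\\-100&101\end{pmatrix}$ and $v=(10,1)$ one gets $v^T\Sigma v=-799$; this also shows that $L$ itself is not coercive. Row dominance is an $L^\infty$-type condition, so argue on the most negative component. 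Let $m=\min_k\operatorname{ess\,inf}\psi_k<0$. On the set where $\psi_k$ lies below a level slightly above $m$, one has $\sum_\ell\Sigma_{k\ell}\psi_\ell<0$ (using uniformly positive row sums). A Stampacchia truncation there gives a contradiction, and invertibility then follows from the Fredholm alternative.

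(ii) $L^{-1}\sigma$ annihilates every nonnegative $f\in\ker\sigma$. The hypotheses allow $\sigma=\begin{pmatrix}0&\sigma_{12}\\0&0\end{pmatrix}$ with $f=(f_1,0)$, so no power of $T$ is strongly positive. Your claim only held for the shifted operator, because there $(\sigma+M)f\neq0$. What your chain argument does give is $T^2f\gg0$ whenever $\sigma f\neq0$. You then need either the weak Krein--Rutman theorem (a nonnegative eigenvector for $r(T)$, after checking $r(T)>0$), deriving positivity, simplicity and strict dominance by hand from this property and a positive dual eigenvector; or a reduction to a genuinely strongly positive operator.

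(iii) For the adjoint, the row sums of $\Sigma^\star$ are the column sums of $\Sigma$, which are not assumed positive. So ``the same argument'' does not give positivity of $(L^\star)^{-1}$. Obtain it by duality: $\int_Y (L^\star)^{-1}g\cdot f=\int_Y g\cdot L^{-1}f\ge0$ for $f,g\ge0$. Note also that the irreducible cycle is now traversed as $\kappa\to\kappa-1\to\dots\to1\to\kappa$.

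Your final step, identifying the two eigenvalues through $\int_Y\sigma\psi\cdot\psi^\star>0$, is correct.
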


Denoting by $\psi_k$ (resp. $\psi_k^\star$) the $k$-th component of the vector $\psi$ (resp. $\psi^\star$), and introducing the vector-valued function $v^\eps$ defined by
$$
v^\eps = \left( v^\eps_k \right)_{1 \leq k \leq \kappa} \quad \text{with} \quad v^\eps_k(x) = \frac{u^\eps_k(x)}{\psi_k(x/\eps)},
$$
it is shown in~\cite{allaireHomogenizationSpectralProblem2000} that
\begin{equation} \label{pb reaction diffusion stationnaire v}
  \left\{
  \begin{aligned}
    - \operatorname{div}\big( D^\eps(\psi_\eps,\psi_\eps^\star) \, \nabla v^\eps \big) + \frac{1}{\eps^2} \, Q^\eps(\lambda^\infty,\psi_\eps,\psi^\star_\eps)(v^\eps) &= \nu^\eps \, B^\eps(\psi_\eps,\psi^\star_\eps) \, v^\eps \ \ \text{in $\Omega$},
    \\
    v^\eps &= 0 \ \ \text{on $\partial \Omega$},
  \end{aligned}
  \right.
\end{equation}
where we have used the notation $\psi_\eps := \psi(\cdot/\eps)$ and $\psi_\eps^\star := \psi^\star(\cdot/\eps)$. The quantities $D^\eps$, $Q^\eps$ and $B^\eps$ are defined as follows: for any generic functions $u$, $v$ and $w$ valued in $\RR^\kappa$ (we denote e.g. by $u_k$ the $k$-th component of $u$), $B^\eps(u,w)$ is a $\kappa \times \kappa$ matrix defined as
$$
\big( B^\eps(u,w) \big)_{k,\ell} = \sigma^\eps_{k,\ell} \, u_\ell \, w_k.
$$
The diffusion coefficient $D^\eps(u,w)$ is a block diagonal fourth-order tensor, so that $D^\eps(u,w) \nabla v$ is a matrix of size $\kappa \times d$ defined by
\begin{equation*} 
  D^\eps(u,w) \nabla v = \big( D^\eps_1(u_1,w_1) \nabla v_1, \hdots, D^\eps_\kappa(u_\kappa,w_\kappa) \nabla v_\kappa \big)^T \in \RR^{\kappa \times d},
\end{equation*}
where $D^\eps_k(u_k,w_k)$ is the $d \times d$ matrix defined by $D^\eps_k(u_k,w_k) = u_k \, w_k \, A_k^\eps$, with $A_k^\eps = A_k(\cdot/\eps)$ the $d \times d$ matrix appearing in~\eqref{eq: A diagonal bloc}. As in~\eqref{pb reaction diffusion stationnaire}, the divergence of $D^\eps(u,w) \nabla v$ is taken line by line, which means that $\operatorname{div}\big( D^\eps(u,w) \, \nabla v \big)$ is a vector, the $k$-th component of which is $\operatorname{div}\big( D^\eps_k(u_k,w_k) \, \nabla v_k \big)$.

For any $\lambda \in \RR$, the vector $Q^\eps(\lambda,u,w)(v) \in \RR^\kappa$ is defined by
$$
Q^\eps(\lambda,u,w)(v) = \eps^2 \, J^\eps(u,w) \cdot \nabla v + \widetilde{Q}^\eps(\lambda,u,w) \, v,
$$
where $\widetilde{Q}^\eps(\lambda,u,w)$ is a $\kappa \times \kappa$ matrix defined by
$$
\left( \widetilde{Q}^\eps(\lambda,u,w) \right)_{k,\ell} = \begin{cases}
  \left( \Sigma^\eps_{k,\ell} - \lambda \, \sigma^\eps_{k,\ell} \right) u_\ell \, w_k & \text{if $k \neq \ell$},
  \\[3pt]
  \dis - \sum^\kappa_{\substack{\ell' = 1 \\ \ell' \neq k}} \left( \widetilde{Q}^\eps(\lambda,u,w) \right)_{k,\ell'} & \text{if $k = \ell$},
\end{cases}
$$
and where $J^\eps(u,w)$ is a $\kappa \times d$ matrix, with rows $\big(J^\eps(u,w)\big)_k$ defined by
$$
\big(J^\eps(u,w)\big)_k = A^\eps_k \left( u_k \nabla w_k - w_k \nabla u_k \right) \in \RR^d.
$$
The product $J^\eps(u,w) \cdot \nabla v$ is defined by
$$
J^\eps(u,w) \cdot \nabla v = \begin{pmatrix} \big(J^\eps(u,w)\big)_1 \cdot \nabla v_1 \\ \vdots \\ \big((J^\eps(u,w)\big)_\kappa \cdot \nabla v_\kappa \end{pmatrix}.
$$
The homogenization limit of~\eqref{pb reaction diffusion stationnaire v} is established in~\cite{allaireHomogenizationSpectralProblem2000}. If the symmetry condition
\begin{equation} \label{symmetry condition}
  \sum_{k=1}^\kappa \int_Y A_k \left( \psi_k \nabla \psi_k^\star - \psi_k^\star \nabla \psi_k \right) = 0
\end{equation}
is satisfied, then each component $v^\eps_k$ weakly converges in $H^1_0(\Omega)$ to $v^\star$, the first eigenvector of the homogenized problem
\begin{equation*} 
  - \operatorname{div} \left( D^\star \nabla v^\star \right) = \nu^\star \, \sigma^\star \, v^\star \ \ \text{in $\Omega$}, \qquad v^\star = 0 \ \ \text{on $\partial \Omega$},
\end{equation*}
where the constant real number $\sigma^\star$ and the constant matrix $D^\star \in \RR^{d \times d}$ are defined in~\cite[Theorem~3.2]{allaireHomogenizationSpectralProblem2000}. The eigenvector $v^\star$ is a scalar-valued function, and all components of $v^\eps$ thus converge to the same limit.

\begin{rmq}
  It is mentioned in~\cite{allaireHomogenizationSpectralProblem2000} that the symmetry condition~\eqref{symmetry condition} is satisfied if $A$, $\Sigma$ and $\sigma$ (which are defined on the periodic cell $Y$) all have a cubic symmetry (in the case when $Y = (-1/2,1/2)^d$, this means that $A$, $\Sigma$ and $\sigma$ are all even functions with respect to $y_i$, for any $1 \leq i \leq d$).
\end{rmq}

\subsection{Description of the actual method} \label{sec:MsFEM multigroup description}

We now describe the MsFEM method we propose for the vector variant of the problem. This method is based on the preliminary method described in Section~\ref{sec:MsFEM preliminaire}, on the filtering ideas exposed in Section~\ref{sec:approx_psi}, and it is also inspired by the homogenization results recalled in Section~\ref{sec:homogenization results multigroup}. For each element $K$ of the coarse mesh $\mathcal{T}_H$, we construct a square-shaped oversampling patch $S_K$ around the element $K$, as shown in Figure~\ref{fig:patch_oversampling_comparison}.

\medskip

We proceed as follows:
\begin{description}
\item[$\bullet$ Step~1 (offline):] Computation of a proxy.
\end{description}
For each element $K$ of the coarse mesh $\mathcal{T}_H$, we consider the filter $\tau_K$ defined on $S_K$ by~\eqref{hypotheses filtre varphi}. We then compute, on $S_K$, the eigenvector $\widetilde{\psi}^\eps \in [H^1(\Omega)]^\kappa$ associated with the smallest eigenvalue $\widetilde{\lambda}^\eps \in \RR$ and with the Lagrange multipliers $\widetilde{\mu}^\eps_k \in \RR^d$ (for any $1 \leq k \leq \kappa$) such that, for any $v \in [H^1(\Omega)]^\kappa$ and any $\mu_k \in \RR^d$ (with $1 \leq k \leq \kappa$),
\begin{multline} \label{pb patch oversampling filtre 2 groupes_1}
  \eps^2 \sum_{1 \leq k \leq \kappa} \int_{S_K} \tau_K \, (\nabla v_k)^T A^\eps_k \nabla \widetilde{\psi}_k^\eps + \int_{S_K} \tau_K \, v^T \Sigma^\eps \, \widetilde{\psi}^\eps
  \\
  = \widetilde{\lambda}^\eps \int_{S_K} \tau_K \, v^T \sigma^\eps \, \widetilde{\psi}^\eps + \sum_{1 \leq k \leq \kappa} \widetilde{\mu}^\eps_k \cdot \int_{S_K} \tau_K \nabla v_k,
\end{multline}
and
\begin{equation} \label{pb patch oversampling filtre 2 groupes_2}
  \sum_{1 \leq k \leq \kappa} \mu_k \cdot \int_{S_K} \tau_K \nabla \widetilde{\psi}_k^\eps = 0.
\end{equation}
On each element $K \in \mathcal{T}_H$, we then define $\widetilde{\psi}^\eps_K := \widetilde{\psi}^\eps|_K$ as a proxy of the function $\psi(\cdot/\eps)$.

\medskip

We proceed similarly (replacing $\Sigma$ and $\sigma$ by their transpose) to define a proxy $\widetilde{\psi}^{\eps,\star}_K$ of the function $\psi^\star(\cdot/\eps)$ on $K$. We numerically observe that that problem and~\eqref{pb patch oversampling filtre 2 groupes_1}--\eqref{pb patch oversampling filtre 2 groupes_2} share the same first eigenvalue $\widetilde{\lambda}^\eps$, which is used as a proxy for $\lambda^\infty$ on $K$, and that we denote $\widetilde{\lambda}^\eps_K$ hereafter.

\begin{description}
\item[$\bullet$ Step~2 (offline):] Computation of the MsFEM-lin basis functions.
\end{description}

We follow the same ideas as for the scalar variant of the problem, and use the functions $\widetilde{\psi}_K^\eps$ and $\widetilde{\psi}_K^{\eps,\star}$ and the scalar $\widetilde{\lambda}^\eps_K$ as proxies for $\psi$, $\psi^\star$ and $\lambda^\infty$ on $K$ in~\eqref{pb reaction diffusion stationnaire v}. Restricting ourselves momentarily to the case $\kappa=2$ for the sake of simplicity, we define, for any $1 \leq i \leq N$ (we recall that $N$ is the number of internal vertices of the coarse mesh $\mathcal{T}_H$), the function $\chi_i^\eps$ as the solution in $[ H^1_0(\Omega) ]^2$ to the following problem: for any $K \in \mathcal{T}_H$,
\begin{equation} \label{chi phi_vector}
  \left\{
  \begin{aligned}
    - \operatorname{div} \left[ D^\eps\left(\widetilde{\psi}_K^\eps,\widetilde{\psi}_K^{\eps,\star}\right) \nabla \chi_i^\eps \right] + \frac{1}{\eps^2} \, Q^\eps\left(\widetilde{\lambda}^\eps_K,\widetilde{\psi}_K^\eps,\widetilde{\psi}_K^{\eps,\star}\right)(\chi_i^\eps) &= 0 && \text{in $K$},
    \\
    \chi_i^\eps &= \begin{pmatrix} \chi_i^{\PP_1} \\ \chi_i^{\PP_1} \end{pmatrix} && \text{on $\partial K$}.
  \end{aligned}
  \right.
\end{equation}
We next define the MsFEM basis functions $\left\{ \phi_{i,1}^\eps \right\}_{1 \leq i \leq N}$ and $\left\{ \phi_{i,2}^\eps \right\}_{1 \leq i \leq N}$ on $\Omega$ as follows: for any $K \in \mathcal{T}_H$,
\begin{equation} \label{eq:def_phi_actual_vector}
  \phi_{i,1}^\eps = \begin{pmatrix} (\chi_i^\eps)_1 \, (\widetilde{\psi}_K^\eps)_1 \\ 0 \end{pmatrix} \quad \text{and} \quad \phi_{i,2}^\eps = \begin{pmatrix} 0 \\ (\chi_i^\eps)_2 \, (\widetilde{\psi}_K^\eps)_2 \end{pmatrix} \ \ \text{in $K$},
\end{equation}
where $(\widetilde{\psi}_K^\eps)_k$ is the $k$-th component of the vector $\widetilde{\psi}_K^\eps \in \RR^2$ (and likewise for $(\chi_i^\eps)_k$). We finally consider the approximation space
\begin{equation} \label{MsFEM espace 2 groupe}
  V_{\eps,H} = \spn \left\{ \phi_{i,1}^\eps, \ \ \phi_{i,2}^\eps, \quad 1 \leq i \leq N \right\}.
\end{equation}

\begin{rmq}
  Alternatively, one can define the MsFEM-lin functions $\chi_i^{\eps,j}$, for $j=1,2$, as solutions in $[ H^1_0(\Omega) ]^2$ to the same PDE: for any $j=1,2$, for any $K \in \mathcal{T}_H$,
  \begin{equation*}
    - \operatorname{div} \left[ D^\eps\left(\widetilde{\psi}_K^\eps,\widetilde{\psi}_K^{\eps,\star}\right) \nabla \chi_i^{\eps,j} \right] + \frac{1}{\eps^2} \, Q^\eps\left(\widetilde{\lambda}^\eps_K, \widetilde{\psi}_K^\eps,\widetilde{\psi}_K^{\eps,\star}\right)(\chi_i^{\eps,j}) = 0 \quad \text{in $K$},
  \end{equation*}
  with the boundary conditions $\dis \chi_i^{\eps,1} = \begin{pmatrix} \chi_i^{\PP_1} \\ 0 \end{pmatrix}$ and $\dis \chi_i^{\eps,2} = \begin{pmatrix} 0 \\ \chi_i^{\PP_1} \end{pmatrix}$ on $\partial K$. We next define the basis functions $\left\{ \phi_{i,j}^\eps \right\}_{1 \leq i \leq N}$ (for any $j=1,2$) on $\Omega$ as
  \begin{equation*}
    \forall K \in \mathcal{T}_H, \quad \phi_{i,j}^\eps = \begin{pmatrix} (\chi_i^{\eps,j})_1 \, (\widetilde{\psi}_K^\eps)_1 \\ (\chi_i^{\eps,j})_2 \, (\widetilde{\psi}_K^\eps)_2 \end{pmatrix} \ \ \text{in $K$},
  \end{equation*}
  and then introduce the approximation space defined by~\eqref{MsFEM espace 2 groupe}. In practice, we have observed that these two methods yield very similar numerical results. We hence decided to retain the method described above, which is less computationally expensive than the method described in this remark.
\end{rmq}

The extension to the cases $\kappa \geq 2$ is straightforward.

\begin{description}
\item[$\bullet$ Step~3 (online):] Solution to the global problem.
\end{description}
We perform a Galerkin approximation of~\eqref{pb reaction diffusion stationnaire} on the space~\eqref{MsFEM espace 2 groupe} and look for the first eigencouple $(\lambda_H^\eps,u^\eps_H) \in \RR \times V_{\eps,H}$ such that, for any $w \in V_{\eps,H}$,
\begin{equation} \label{pb reaction diffusion stationnaire MsFEM ueps 2 groupes}
  \eps^2 \sum_{1 \leq k \leq \kappa} \sum_{K \in \mathcal{T}_H} \int_K (\nabla w_k)^T A^\eps_k \nabla [u^\eps_H]_k + \int_\Omega w^T \Sigma^\eps \, u^\eps_H = \lambda^\eps_H \int_\Omega w^T \sigma^\eps \, u^\eps_H.
\end{equation}
The first integral is written as a broken sum since the approach is non-conforming.

\subsection{Numerical results} \label{sec:num_vector-valued}

We now present several numerical experiments in the case $\kappa = 2$ which confirm the efficiency of the MsFEM approach we propose, and demonstrate that our method is not restricted to scalar-valued problems. As in Section~\ref{sec:num_one_group}, all the computations have been performed with FreeFEM~\cite{hechtNewDevelopmentFreeFem2012} and the associated scripts are available at~\cite{code_alberic}.

We consider here a test case (frequently used in practice for neutronics applications) which corresponds to the following problem:
\begin{equation} \label{pb reaction diffusion stationnaire model 2 groupes}
  \left\{
  \begin{aligned}
    -\eps^2 \text{div}\left(A_1^\eps \nabla u_1^\eps \right) + \Sigma_{11}^\eps \, u_1^\eps &= \lambda^\eps \left( \sigma_{11}^\eps \, u_1^\eps + \sigma_{12}^\eps \, u_2^\eps \right) && \text{in $\Omega$},
    \\
    -\eps^2 \text{div}\left(A_2^\eps \nabla u_2^\eps \right) + \Sigma_{22}^\eps \, u_2^\eps &= - \Sigma_{21}^\eps \, u_1^\eps && \text{in $\Omega$},
  \end{aligned}
  \right.
\end{equation}
with the boundary conditions $u^\eps_1 = u^\eps_2 = 0$ on $\partial\Omega$. This corresponds to setting $\Sigma_{12}^\eps = 0$ and $\sigma_{21}^\eps = \sigma_{22}^\eps = 0$ in~\eqref{pb reaction diffusion stationnaire}. In the periodic case, the values of the coefficients $\Sigma^\eps$ and $\sigma^\eps$ are chosen such that Assumption~\eqref{hypotheses sigma} is satisfied.

\begin{rmq}
In~\cite[Chapter~3, Section~3.3.3]{these_lefort}, a case with $\sigma^\eps = \text{Id}_2$ is also considered. The hypothesis~\eqref{hypotheses sigma} is not satisfied (since $\sigma = \text{Id}_2$ does not satisfy the condition $\sigma_{1,\kappa} > 0$), and hence the homogenization results of Section~\ref{sec:homogenization results multigroup} do not apply. However, we numerically observe that the assertion of Theorem~\ref{thm:spectral multigroup} concerning the first eigenvalue of~\eqref{pb reaction diffusion stationnaire} is still satisfied. We can still apply the MsFEM approach described in Section~\ref{sec:MsFEM multigroup description}. The conclusions we draw from numerical experiments in that case $\sigma^\eps = \text{Id}_2$ are qualitatively similar to those obtained in the case discussed here.
\end{rmq}

In the setting of~\eqref{pb reaction diffusion stationnaire model 2 groupes}, we have considered three types of coefficients: periodic and symmetric coefficients (for which~\eqref{symmetry condition} is satisfied), periodic and non-symmetric coefficients (for which we have observed~\eqref{symmetry condition} not to be satisfied) and quasi-periodic coefficients. To show the robustness of our approach, we focus here on the latter two types and refer to~\cite[Chapter~3, Section~3.3.3]{these_lefort} for a discussion of the results obtained with the first type of coefficients.

We proceed as in the scalar version of the problem. We thus again consider a two-dimensional setting, take $\Omega = (0,1)^2$, and consider a coarse triangular mesh $\mathcal{T}_H$ with $H = 1/8$. Each element of this coarse mesh is itself meshed with a fine triangular mesh $\mathcal{T}_h$ with $h \ll \eps $, as shown in Figure~\ref{fig:mesh}. A reference solution is computed as the first eigencouple $(\lambda^\eps,u^\eps)$ of~\eqref{pb reaction diffusion stationnaire} on the fine mesh $\mathcal{T}_h$ using a $\PP_1$-finite element method. We then compute the MsFEM first eigencouple $(\lambda^\eps_H,u^\eps_H)$, solution to~\eqref{pb reaction diffusion stationnaire MsFEM ueps 2 groupes}. Throughout these numerical tests, and as in the scalar-valued variant, we fix the oversampling ratio at $\rho = 2$ and we consider the filter function $\tau_0(x) = C \, x^2 \, (1-x)^2$, for some constant $C$ such that $\| \tau_0 \|_{L^1(0,1)} = 1$ (this corresponds to a filter of order $k=2$ in the sense of~\eqref{hypotheses filtre varphi0}).

For the sake of comparison, we also compute the first eigencouple obtained by the classical $\PP_1$-method on the coarse mesh $\mathcal{T}_H$. We denote generically by $(\lambda^\eps_H, u^\eps_H)$ the first eigencouple obtained by the MsFEM-method or the $\PP_1$-method on the coarse mesh $\mathcal{T}_H$. Since the two types of coefficients considered here are not covered by homogenization theory, we do not consider any preliminary type method.

The relative error on the eigenvalue is defined as in the scalar-valued case by~\eqref{erreur valeur propre relative}. Denoting $u^\eps = \begin{pmatrix} u^\eps_1 \\ u^\eps_2 \end{pmatrix}$ and $u^\eps_H = \begin{pmatrix} u^\eps_{H,1} \\ u^\eps_{H,2} \end{pmatrix}$ the eigenvectors, and recalling that the approximation space~\eqref{MsFEM espace 2 groupe} is non-conforming, the error on the eigenvector is defined as
\begin{equation} \label{erreur vecteur propre relative multigroup}
  \frac{1}{\sqrt{2}} \sqrt{ \frac{\sum_{K \in \mathcal{T}_H} \| u^\eps_1 - u^\eps_{H,1} \|^2_{H^1(K)}}{\| u^\eps_1 \|^2_{H^1(\Omega)}} + \frac{\sum_{K \in \mathcal{T}_H} \| u^\eps_2 - u^\eps_{H,2} \|^2_{H^1(K)}}{\| u^\eps_2\|^2_{H^1(\Omega)}}},
\end{equation}
so that each component contributes equally to the error estimate. The prefactor $1/\sqrt{2}$ ensures a fair comparison of the relative errors with the scalar-valued case.

\medskip

In the definitions of the coefficients that are given below, we recall that $\text{Id}_2$ is the identity matrix in dimension 2 and that
$$
\Sigma^\eps = \begin{pmatrix} \Sigma^\eps_{11} & \Sigma^\eps_{12} \\ \Sigma^\eps_{21} & \Sigma^\eps_{22} \end{pmatrix} = \begin{pmatrix} \Sigma^\eps_{11} & 0 \\ \Sigma^\eps_{21} & \Sigma^\eps_{22} \end{pmatrix},
\qquad
\sigma^\eps = \begin{pmatrix} \sigma^\eps_{11} & \sigma^\eps_{12} \\ \sigma^\eps_{21} & \sigma^\eps_{22} \end{pmatrix} = \begin{pmatrix} \sigma^\eps_{11} & \sigma^\eps_{12} \\ 0 & 0 \end{pmatrix}.
$$

\subsubsection{Periodic and non-symmetric coefficients}

We define the diffusion and reaction coefficients in the cell $Y$ as follows: for any $(y_1,y_2) \in Y$,
\begin{equation} \label{coefficients periodic nonsymmetric 2 groupe second}
  \begin{aligned}
    A_1(y_1,y_2) &= \big(10 + \cos(2\pi y_1) \sin(2\pi y_2) \big) \, \text{Id}_2,
    \\[3pt]
    A_2(y_1,y_2) &= \big(6 + 2 \sin(2\pi(y_1 + 2y_2)) \cos(2\pi(y_1 - y_2)) \big) \, \text{Id}_2,
    \\[3pt]
    \Sigma_{11}(y_1,y_2) &= 11 + 10 \cos(2\pi y_1) \sin(2\pi y_2),
    \\[3pt]
    \Sigma_{21}(y_1,y_2) &= -5 \cos^2\left(\pi y_1 + \frac{\pi}{4}\right) - 1,
    \\[3pt]
    \Sigma_{22}(y_1,y_2) &= 8 \left( 5 + \cos^2(2\pi(y_1 - y_2)) \right),
    \\[3pt]
    \sigma_{11}(y_1,y_2) &= 1 + \frac{1}{2} \cos(2\pi(y_1 + y_2)) \sin(2 \pi y_2),
    \\[3pt]
    \sigma_{12}(y_1,y_2) &= \frac{1}{10} \big(4 + 3 \sin(2 \pi y_1) \cos(2 \pi y_2) \big).
  \end{aligned}
\end{equation}
We observe numerically that Assumption~\eqref{symmetry condition} is indeed not satisfied. The corresponding oscillatory coefficients are defined on $\Omega$ by~\eqref{eq:period}.

\medskip

The results shown on Figures~\ref{fig:compare_triche_MsFEM_coeffs_nonsymetriques_avec_coeff_absorbtion_erreur_H1_a} and~\ref{fig:compare_triche_MsFEM_coeffs_nonsymetriques_avec_coeff_absorbtion_erreur_H1_b} are obtained by fixing $H$ (at its value $H=1/8$) and varying $\eps$. These results confirm those of the scalar-valued variant: our MsFEM approach is robust with respect to the value of the characteristic size $\eps$ of the small scales, and yields results with an accuracy of the order of 20\% for the eigenvector, and between 1\% and 0.1\% for the eigenvalue. The accuracy here is thus comparable to that obtained for scalar-valued periodic cases (see Figures~\ref{fig:ErreurH1_Penche_avec_filtre_filtre_puissance_1} and~\ref{fig:ErreurVP_Penche_avec_filtre_filtre_puissance_1} of Section~\ref{sec:num_periodic}), even though the problem is more complicated (it is vector-valued and not self-adjoint) and the microstructure is also more complex (since it does not satisfy the assumptions under which homogenization results are established).

\begin{figure}[htbp]
  \centering
  \includegraphics[width=0.9\textwidth]{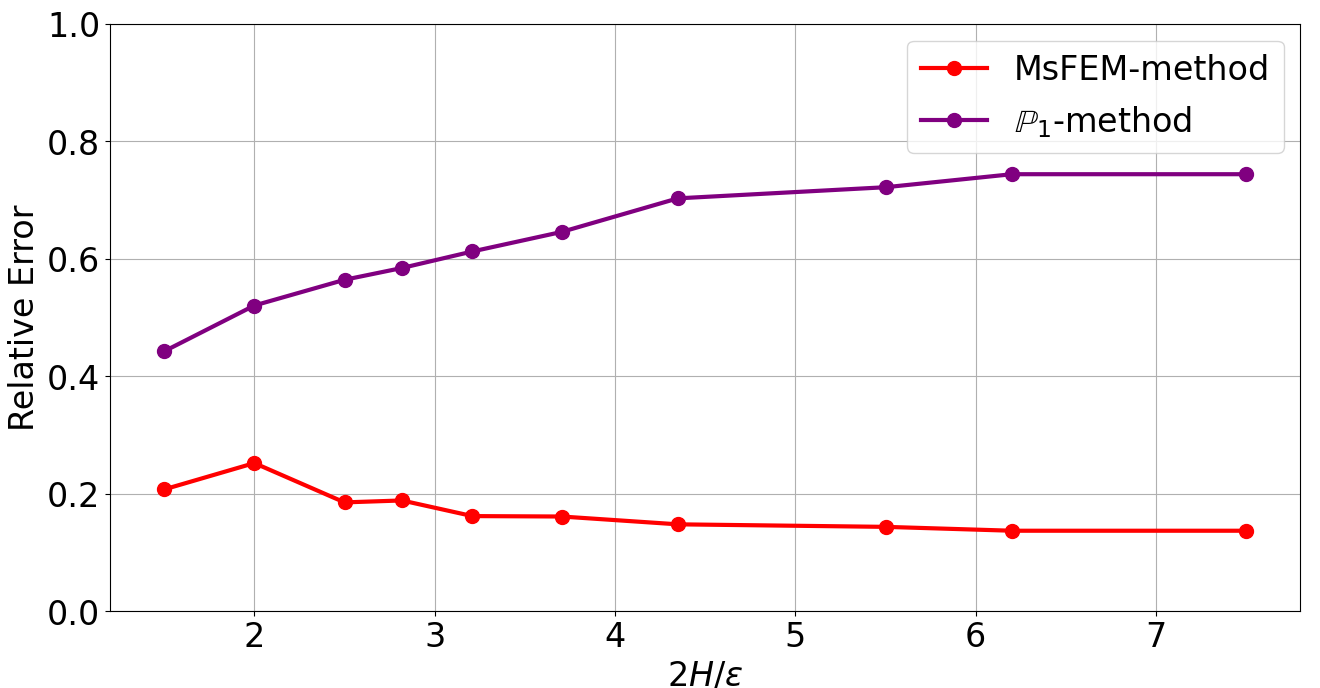}
  \caption{Periodic and non-symmetric case~\eqref{coefficients periodic nonsymmetric 2 groupe second}: relative error~\eqref{erreur vecteur propre relative multigroup} on the eigenvector for the MsFEM method and the $\PP_1$-method, as a function of $\eps$ ($H = 1/8$ fixed; note again that the abscissa, here and in many figures below, is $2H/\eps$).}
  \label{fig:compare_triche_MsFEM_coeffs_nonsymetriques_avec_coeff_absorbtion_erreur_H1_a}
\end{figure}

\begin{figure}[htbp]
  \centering
  \includegraphics[width=0.9\textwidth]{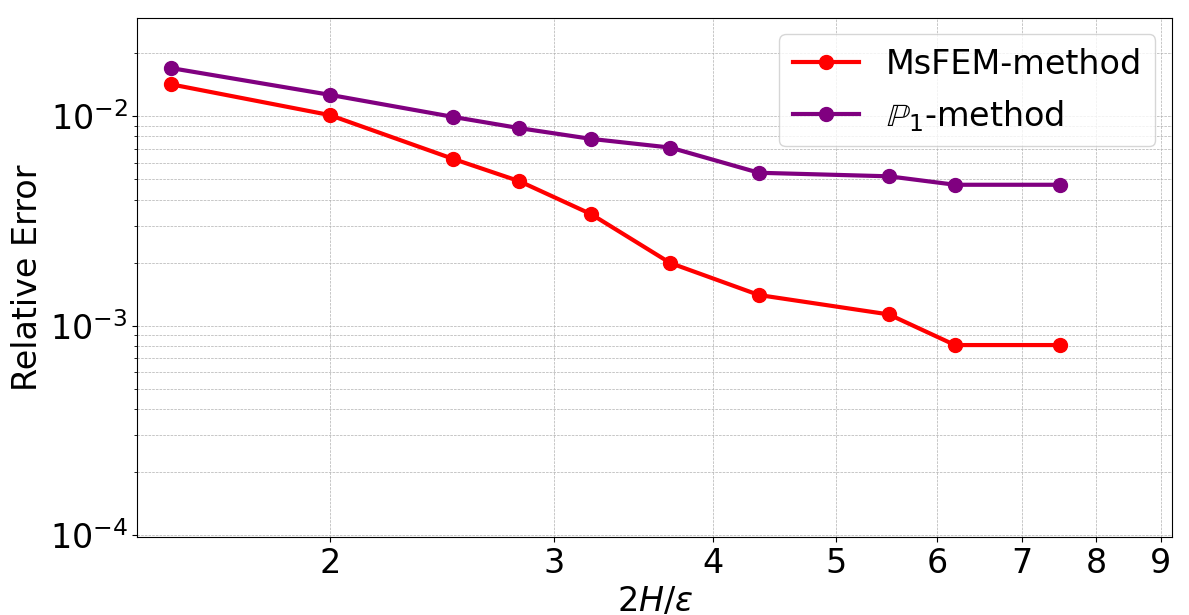}
  \caption{Periodic and non-symmetric case~\eqref{coefficients periodic nonsymmetric 2 groupe second}: relative error~\eqref{erreur valeur propre relative} on the eigenvalue for the MsFEM method and the $\PP_1$-method, as a function of $\eps$ ($H = 1/8$ fixed, $\operatorname{log}-\operatorname{log}$ scale).}
  \label{fig:compare_triche_MsFEM_coeffs_nonsymetriques_avec_coeff_absorbtion_erreur_H1_b}
\end{figure}

\subsubsection{Quasi-periodic coefficients}

We now consider the diffusion and reaction coefficients defined on $\Omega$ as follows: for any $(x_1,x_2) \in \Omega$,
{\footnotesize
\begin{equation} \label{coefficients quasi periodic 2 groupe second_a}
  \begin{aligned}
    A_1^\eps(x_1,x_2) &= \left[ 10 + \frac{1}{4} \left( \cos\left(\frac{2\pi}{\eps}x_1\right) + \cos\left(\frac{2\sqrt{2}\pi}{\eps}x_1\right) \right) 
      \left( \sin\left(\frac{2\pi}{\eps}x_2\right) + \sin\left(\frac{2\sqrt{2}\pi}{\eps}x_2\right) \right) \right] \text{Id}_2,
    \\[3pt]
    A_2^\eps(x_1,x_2) &= \left[ 1 + \frac{1}{10} \left( \cos\left(\frac{2\pi}{\eps}x_1\right) + \cos\left(\frac{\sqrt{2}\pi}{\eps}x_1\right) \right) 
      \left( \sin\left(\frac{2\pi}{\eps}x_2\right) + \sin\left(\frac{\sqrt{2}\pi}{\eps}x_2\right) \right) \right] \text{Id}_2,
  \end{aligned} 
\end{equation}
}
and
{\footnotesize
\begin{equation} \label{coefficients quasi periodic 2 groupe second_b}
  \begin{aligned}
    \Sigma^\eps_{11}(x_1,x_2) &= 9 + 2 \left[ \cos\left(\frac{2\pi}{\eps}x_1\right) + \cos\left(\frac{2\sqrt{2}\pi}{\eps}x_1\right) \right] 
    \left[ \sin\left(\frac{2\pi}{\eps}x_2\right) + \sin\left(\frac{2\sqrt{2}\pi}{\eps}x_2\right) \right],
    \\[3pt]
    \Sigma^\eps_{21}(x_1,x_2) &= - 9 -2 \left[ \cos\left(\frac{2\pi}{\eps}x_1\right) + \cos\left(\frac{2\sqrt{3}\pi}{\eps}x_1\right) \right]
    \left[ \sin\left(\frac{2\pi}{\eps}x_2\right) + \sin\left(\frac{2\sqrt{3}\pi}{\eps}x_2\right) \right],
    \\[3pt]
    \Sigma^\eps_{22}(x_1,x_2) &= 36 + 8 \left[ \cos\left(\frac{2\pi}{\eps}x_1\right) + \cos\left(\frac{1.5\sqrt{5}\pi}{\eps}x_1\right) \right]
    \left[ \sin\left(\frac{2\pi}{\eps}x_2\right) + \sin\left(\frac{1.5\sqrt{5}\pi}{\eps}x_2\right) \right],
    \\[3pt]
    \sigma^\eps_{11}(x_1,x_2) &= \frac{5}{2} + \frac{1}{8} \left[ \cos\left(\frac{2\pi}{\eps}x_1\right) + \cos\left(\frac{\sqrt{3}\pi}{\eps}x_1\right) \right] 
    \left[ \sin\left(\frac{2\pi}{\eps}x_2\right) + \sin\left(\frac{\sqrt{3}\pi}{\eps}x_2\right) \right],
    \\[3pt]
    \sigma^\eps_{12}(x_1,x_2) &= \frac{3}{2} + \frac{1}{4} \left[ \cos\left(\frac{2\pi}{\eps}x_1\right) + \cos\left(\frac{2\sqrt{2}\pi}{\eps}x_1\right) \right]
    \left[ \sin\left(\frac{2\pi}{\eps}x_2\right) + \sin\left(\frac{2\sqrt{2}\pi}{\eps}x_2\right) \right].
  \end{aligned} 
\end{equation}
}
These coefficients are obviously quasi-periodic. Fixing $H=1/8$ and varying $\eps$, we obtain the excellent results shown on Figures~\ref{fig:ErreurH1_QP_vector} and~\ref{fig:ErreurVP_QP_vector}: the error on the eigenvector remains smaller than 20\%, and that on the eigenvalue remains smaller than 0.1\% (and decreases down to 0.01\% for the largest values of the ratio $2H/\eps$). The comparison with the results of Figures~\ref{fig:ErreurH1_QP} and~\ref{fig:ErreurVP_QP} confirms the robustness of our approach.

\begin{figure}[htbp]
  \centering
  \includegraphics[width=0.9\textwidth]{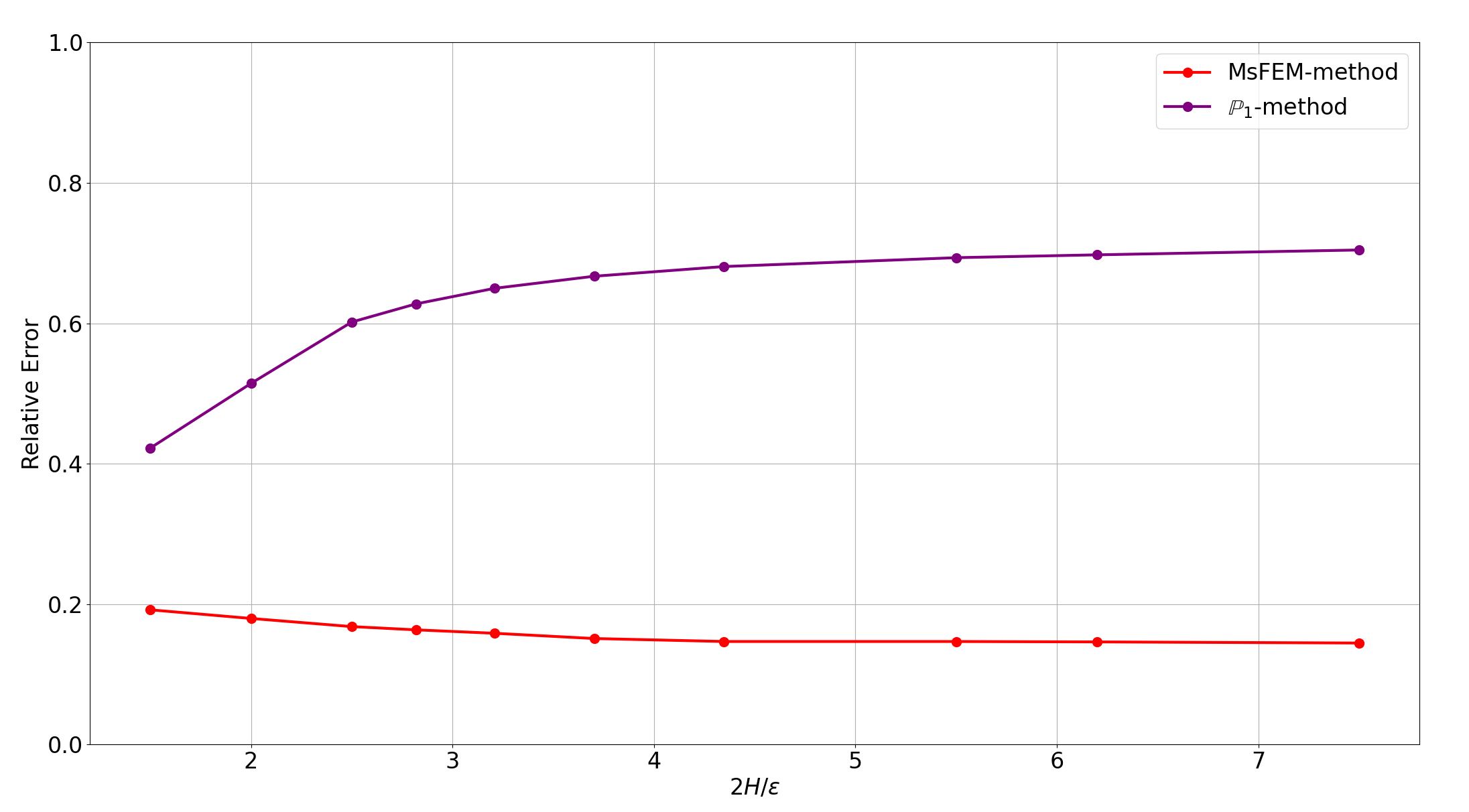}
  \caption{Quasi-periodic case~\eqref{coefficients quasi periodic 2 groupe second_a}--\eqref{coefficients quasi periodic 2 groupe second_b}: relative error~\eqref{erreur vecteur propre relative multigroup} on the eigenvector for the MsFEM method and the $\PP_1$-method, as a function of $\eps$ ($H = 1/8$ fixed).}
  \label{fig:ErreurH1_QP_vector}
\end{figure}

\begin{figure}[htbp]
  \centering
  \includegraphics[width=0.9\textwidth]{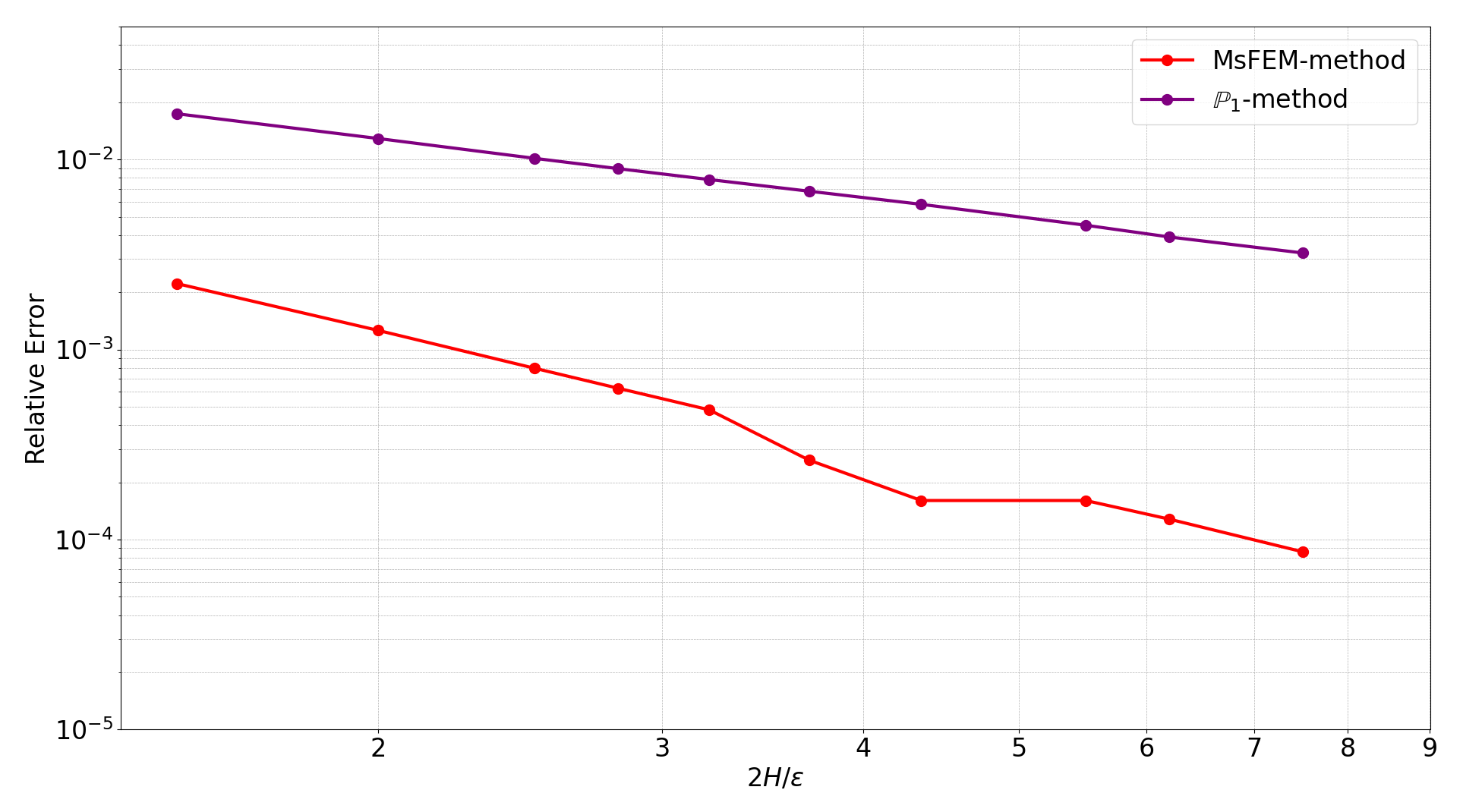}
  \caption{Quasi-periodic case~\eqref{coefficients quasi periodic 2 groupe second_a}--\eqref{coefficients quasi periodic 2 groupe second_b}: relative error~\eqref{erreur valeur propre relative} on the eigenvalue for the MsFEM method and the $\PP_1$-method, as a function of $\eps$ ($H = 1/8$ fixed, $\operatorname{log}-\operatorname{log}$ scale).}
  \label{fig:ErreurVP_QP_vector}
\end{figure}

\section{Proof of convergence of the preliminary method} \label{sec:demo MsFEM preliminaire}

In this section, focusing on the scalar-valued case, we establish an error bound on 
\begin{equation} \label{taux de convergence}
  \frac{\|u^\eps - u^{\eps,\psi}_H \|_{H^1(\Omega)}}{\|u^\eps\|_{H^1(\Omega)}},
\end{equation}
where $u^\eps$ is the first eigenvector of~\eqref{pb reaction diffusion stationnaire} and $u^{\eps,\psi}_H$ is its approximation using the preliminary method, namely the first eigenvector of~\eqref{pb reaction diffusion stationnaire MsFEM ueps triche}. As explained above, this preliminary method is not the one we are practically interested in, but Theorem~\ref{thm taux de convergence methode triche} combined with the results of Section~\ref{sec:demo small perturbations} already provide a good indication of the efficiency of the actual MsFEM method.

To establish a bound on~\eqref{taux de convergence}, we will partly draw inspiration from the proof of convergence of the MsFEM-lin method, detailed in~\cite[Chapter~5]{blancHomogeneisationMilieuPeriodique2022b}. Throughout this proof, we denote by $C$ any constant that does not depend on $\eps$ or $H$ (and that may vary from one line to the next). We assume that the coefficients $A$, $\sigma$ and $\Sigma$ are $Y$-periodic, satisfy~\eqref{hypothese coercivite D} and are regular enough so that
\begin{equation} \label{eq:regul_w_psi}
  \widetilde{w}_j \in W^{1,\infty}(Y) \cap C^0(\overline{Y}) \quad \text{and} \quad \psi \in W^{1,\infty}(Y),
\end{equation}
where we recall that $\widetilde{w}_j$ and $\psi$ are defined by~\eqref{eq correcteur} and~\eqref{pb spectral}, respectively. Furthermore, we assume that $v^\star$, the first eigenvector of the homogenized problem~\eqref{pb reaction diffusion homogeneise}, belongs to $H^1_0(\Omega) \cap H^2(\Omega)$. Finally, we assume that the ratio $\Sigma/\sigma$ is not constant, which implies that $\|\nabla \psi\|_{L^2(Y)} > 0$. This assumption is not very restrictive, as if $\Sigma/\sigma$ is constant, then $\psi = 1$, and Problem~\eqref{pb reaction diffusion stationnaire} becomes a purely diffusive eigenvalue problem, much easier to address.

\begin{theorem} \label{thm taux de convergence methode triche}
  Let $\mathcal{T}_H$ be a regular mesh of $\Omega$ consisting of triangles $K$ with characteristic size $H$. Let $u^\eps$ be the first eigenvector of~\eqref{pb reaction diffusion stationnaire} and $u^{\eps,\psi}_H$ be the first eigenvector of~\eqref{pb reaction diffusion stationnaire MsFEM ueps triche}. We assume that $H \geq \eps$. Under the assumptions mentioned above, we have the following estimate: there exists $\eps_0$ (independent of $H$) such that, for any $\eps \leq \eps_0$,
  \begin{equation} \label{th taux de convergence methode triche}
    \frac{\|u^\eps - u^{\eps,\psi}_H \|_{H^1(\Omega)}}{\|u^\eps\|_{H^1(\Omega)}} \leq C \left( H^2 + \eps + \eps \, \sqrt{\frac{\eps}{H}} + R(\eps) \right),
  \end{equation}
  where $R(\eps)$, defined by~\eqref{thm:strong convergence veps L2}, converges to 0 when $\eps \to 0$. 
\end{theorem}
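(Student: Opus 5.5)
We reduce the problem to the purely diffusive one via the change of unknown of Theorem~\ref{changement variable veps}, and then apply the classical MsFEM-lin analysis to that diffusive problem.

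\textbf{Step 1: transfer to the diffusive problem.} Write $w = \psi(\cdot/\eps)\, z$. Integrating by parts against the cell equation~\eqref{pb spectral}, the quadratic form on the left-hand side of~\eqref{pb reaction diffusion stationnaire MsFEM ueps triche} becomes
\[
\eps^2 \int_\Omega \psi_\eps^2 (\nabla z)^T A_\eps \nabla z + \lambda^\infty \int_\Omega \sigma_\eps \psi_\eps^2 z^2 ,
\]
where $\psi_\eps = \psi(\cdot/\eps)$ and $A_\eps = A(\cdot/\eps)$. The right-hand side becomes $\lambda \int_\Omega \sigma_\eps \psi_\eps^2 z^2$. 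Hence the Galerkin problem~\eqref{pb reaction diffusion stationnaire MsFEM ueps triche} on $V_{\eps,\psi,H}$ is exactly the Galerkin problem for~\eqref{pb veps periodic} on $V^{\rm interm}_{\eps,\psi,H}$, with the correspondences
\[
u^{\eps,\psi}_H = \psi_\eps\, v^{\eps,\psi}_H, \qquad \lambda^{\eps,\psi}_H = \lambda^\infty + \eps^2 \nu^{\eps,\psi}_H .
\]
We then pass the error back. Using~\eqref{eq:borne_psi} and $\|\nabla\psi_\eps\|_{L^\infty}\le C/\eps$ from~\eqref{eq:regul_w_psi},
\[
\|u^\eps - u^{\eps,\psi}_H\|_{H^1} \le C\bigl(\|\nabla(v^\eps - v_H^{\eps,\psi})\|_{L^2} + \eps^{-1}\|v^\eps - v_H^{\eps,\psi}\|_{L^2}\bigr).
\]
For the denominator, $\|u^\eps\|_{H^1} \ge c/\eps$ for $\eps\le\eps_0$. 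Indeed, $\nabla u^\eps = \psi_\eps \nabla v^\eps + \eps^{-1}(\nabla\psi)(\cdot/\eps)\, v^\eps$, the first term is bounded, and the second has $L^2$ norm $\sim \eps^{-1}\|\nabla\psi\|_{L^2(Y)}\|v^\star\|_{L^2}$ by strong $L^2$ convergence of $v^\eps$ and a two-scale / oscillating-test-function argument. This is where the assumption $\|\nabla\psi\|_{L^2(Y)}>0$ is needed. The relative error is therefore bounded by
\[
C\bigl(\eps\,\|\nabla(v^\eps-v_H^{\eps,\psi})\|_{L^2} + \|v^\eps-v_H^{\eps,\psi}\|_{L^2}\bigr).
\]

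\textbf{Step 2: eigenvalue approximation for the diffusive problem.} Problem~\eqref{pb veps periodic} is symmetric and coercive in the energy norm $a(z,z)=\int \psi_\eps^2 \nabla z^T A_\eps \nabla z$, with $L^2$-weighted mass $m$. We apply the Babu\v{s}ka--Osborn / Strang theory for conforming Galerkin approximations of simple eigenvalues, which is legitimate since $V^{\rm interm}_{\eps,\psi,H}\subset H^1_0$. This gives
\[
\|v^\eps - v_H^{\eps,\psi}\|_a \le C \inf_{z\in V^{\rm interm}} \|v^\eps - z\|_a
\quad\text{and}\quad
\|v^\eps - v_H^{\eps,\psi}\|_{L^2}\le C\inf_{z\in V^{\rm interm}}\|v^\eps - z\|_a .
\]
The constants must be uniform in $\eps$. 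This requires a uniform spectral gap $\nu^{\eps,2}-\nu^{\eps,1}\ge c$, which follows for $\eps\le\eps_0$ from the convergence $\nu^{\eps,m}\to\nu^{\star,m}$ and the simplicity of $\nu^{\star,1}$. The $L^2$ bound is the crude one; it suffices because we only need the $L^2$ error up to the order of the energy error. Sharper duality would improve the $H$-dependence, and since the target contains $H^2$, I would in fact try the Aubin--Nitsche duality argument for the eigenproblem to obtain the $L^2$ error to second order.

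\textbf{Step 3: best-approximation estimate, the main obstacle.} We use the classical MsFEM-lin interpolant
\[
z_H = \sum_i v^\star(x_i)\,\chi_i^{\eps,\psi},
\]
and compare it with the two-scale expansion $v^{\eps,1}$ of Theorem~\ref{thm:strong convergence veps}. The error splits as
\[
\|v^\eps - z_H\| \le \|v^\eps - v^{\eps,1}\| + \|v^{\eps,1}-z_H\| .
\]
In $L^2$, the first term is controlled by $R(\eps)$. Its gradient part enters only multiplied by $\eps$ (Step 1), so $\eps\widetilde{R}(\eps)$ is dominated by $\eps$. For the second term we follow~\cite[Chapter~5]{blancHomogeneisationMilieuPeriodique2022b}. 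On each $K$, $\chi_i^{\eps,\psi}$ equals $\chi_i^{\PP_1}$ plus $\eps\,\widetilde{w}_j(\cdot/\eps)\partial_j\chi_i^{\PP_1}$ plus a boundary-layer corrector. The boundary layer has $H^1(K)$ norm $\sim \sqrt{\eps/H}$ times $|K|^{1/2}$, using $\widetilde w_j\in W^{1,\infty}$ and a cutoff of width $\eps$. The remaining pieces are the interpolation error of $v^\star$ and the error from freezing $\nabla v^\star$, which give $O(H)$ in the gradient and $O(H^2)$ in $L^2$, and $O(\eps)$ terms.

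Multiplying the gradient contributions by $\eps$ and adding the $L^2$ contributions gives
\[
C\bigl(H^2 + \eps + \eps\sqrt{\eps/H} + R(\eps)\bigr),
\]
where $\eps H\le H^2$ because $H\ge\eps$. The delicate points are twofold: keeping the $L^2$ error at order $H^2$ rather than $H$, which needs the duality argument with constants uniform in $\eps$; and the boundary-layer estimate. I expect the uniform-in-$\eps$ duality/regularity step to be the hardest, and I would handle it by dualizing against the homogenized operator and absorbing the difference into $R(\eps)$.
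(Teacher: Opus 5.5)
\textbf{There is a genuine gap: the $L^2$ error of the Galerkin eigenvector.} Your Steps~1 and~2 are sound. Step~1 is also a useful observation that the paper does not state. Testing with $w=\psi(\cdot/\eps)z$ and using the cell equation gives $u_H^{\eps,\psi}=\psi(\cdot/\eps)\,v_H^{\eps,\psi}$ and $\lambda_H^{\eps,\psi}=\lambda^\infty+\eps^2\nu_H^{\eps,\psi}$, where $v_H^{\eps,\psi}$ is the Galerkin eigenvector of~\eqref{pb veps periodic} in $V^{\rm interm}_{\eps,\psi,H}$. The relative error is then bounded by $C(\|v^\eps-v_H^{\eps,\psi}\|_{L^2}+\eps\|\nabla(v^\eps-v_H^{\eps,\psi})\|_{L^2})$.

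The target~\eqref{th taux de convergence methode triche} requires $\|v^\eps-v_H^{\eps,\psi}\|_{L^2}\le C(H^2+\eps+\eps\sqrt{\eps/H}+R(\eps))$ for the \emph{Galerkin eigenvector}. Step~2 only gives $C\inf_z\|v^\eps-z\|_{H^1}$, which is of order $H+\sqrt{\eps/H}+\eps+\widetilde R(\eps)$. That is wrong in the power of $H$, in the resonance term, and in $\widetilde R$ versus $R$. So your claim that the crude $L^2$ bound ``suffices'' is incorrect.

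The closing paragraph of Step~3 then silently replaces $\|v^\eps-v_H^{\eps,\psi}\|_{L^2}$ by the $L^2$ error of the interpolant $z_H$. That replacement is an $L^2$ quasi-optimality of the MsFEM Galerkin solution, which you have not proved.

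The duality repair does not supply it. Aubin--Nitsche bounds the $L^2$ error of the MsFEM Ritz projection by the product of two MsFEM energy errors. The dual solution is itself oscillatory and has the same $\sqrt{\eps/H}$ boundary-layer limitation. The product therefore contains $\sqrt{\eps H}$ and $\eps/H$ terms, which are not dominated by $H^2+\eps+\eps\sqrt{\eps/H}+R(\eps)$. Dualizing against the homogenized operator does not help either, because Galerkin orthogonality only holds for the oscillatory form.

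\textbf{How the paper avoids this.} The paper never estimates the $L^2$ error of a discrete eigenvector. It invokes the Babu\v{s}ka--Osborn quasi-optimality directly for the original problem in the $H^1(\Omega)$ norm: $\|u^\eps-u_H^{\eps,\psi}\|_{H^1}\le C\inf_{w_H\in V_{\eps,\psi,H}}\|u^\eps-w_H\|_{H^1}$. It takes $w_H=\psi(\cdot/\eps)\sum_i v_i^\star\chi_i^{\eps,\psi}$ and uses $\|\psi(\cdot/\eps)z\|_{H^1}\le C(\eps^{-1}\|z\|_{L^2}+\|\nabla z\|_{L^2})$. The $\eps^{-1}$-weighted $L^2$ term therefore falls only on this explicit interpolant. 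Its $L^2$ error is bounded by $R(\eps)+CH^2+C\eps$, using the maximum principle for $\theta^\eps$ to get $\|\theta^\eps\|_{L^2}\le C\eps$. To follow the paper, you would replace your Step~2 by that $H^1$ quasi-optimality.

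Be aware that the paper asserts this quasi-optimality constant is independent of $\eps$ without further argument. Yet the form is only $\eps^2$-coercive in $H^1$, and the spectral gap of the original problem is $\eps^2(\nu^{\eps,2}-\nu^{\eps,1})$. Your ground-state transform is the natural way to obtain $\eps$-uniform constants. In the paper's argument, the weighted $L^2$ control that your proof lacks is absorbed into that uniformity claim.
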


\begin{proof}
Since the operator associated with the variational formulation of~\eqref{pb reaction diffusion stationnaire} is linear, coercive and continuous, we can use an equivalent of Céa's lemma for elliptic eigenvalue problems to estimate the numerator of~\eqref{taux de convergence}. This result is notably stated in~\cite[Theorem~3.1]{BabuskaEstimatesErrorsEigenvalue1987a}. Since $u^\eps$ is simple (recall Theorem~\ref{thm:spectral}), we can write
\begin{equation} \label{eq:cea}
\|u^\eps - u^{\eps,\psi}_H \|_{H^1(\Omega)} \leq C \inf_{w_H \in V_{\eps,\psi,H}} \| u^\eps - w_H \|_{H^1(\Omega)},
\end{equation}
where we recall that $V_{\eps,\psi,H}$ is defined by~\eqref{eq:def_V_eps_psi_H}. We next recall the following classical finite element result: since $v^\star$ (defined by~\eqref{pb reaction diffusion homogeneise}) belongs to $H^1_0(\Omega) \cap H^2(\Omega)$, there exist some $\{ v_i^\star \}_{1 \leq i \leq N}$ such that
\begin{equation} \label{eq:approx_P1}
  \left\| v^\star - \sum_{i=1}^N v_i^\star \, \chi_i^{\PP_1} \right\|_{H^1(\Omega)} \leq C \, H \ \ \text{and} \ \ \left\| v^\star - \sum_{i=1}^N v_i^\star \, \chi_i^{\PP_1} \right\|_{L^2(\Omega)} \leq C \, H^2.
\end{equation}
Choosing $\dis w_H = \sum_{i=1}^N v_i^\star \, \phi_i^{\eps,\psi}$ in~\eqref{eq:cea}, we have
\begin{align*}
\| u^\eps - u^{\eps,\psi}_H \|_{H^1(\Omega)} 
& \leq C \left\| u^\eps - \sum_{i=1}^N v_i^\star \, \phi_i^{\eps,\psi} \right\|_{H^1(\Omega)}
\\
&= C \left\| \psi\left(\frac{\cdot}{\eps}\right) v^\eps - \sum_{i=1}^N v_i^\star \, \chi_i^{\eps,\psi} \, \psi\left(\frac{\cdot}{\eps}\right) \right\|_{H^1(\Omega)}
\\
&= C \left\| \psi\left(\frac{\cdot}{\eps}\right) \left( v^\eps - \sum_{i=1}^N v_i^\star \, \chi_i^{\eps,\psi} \right) \right\|_{H^1(\Omega)}
\end{align*}
and thus
\begin{multline} \label{eq:normandie10}
\| u^\eps - u^{\eps,\psi}_H \|_{H^1(\Omega)} \leq C \left( \left\| \frac{1}{\eps} \, \nabla \psi\left(\frac{\cdot}{\eps}\right) \left( v^\eps - \sum_{i=1}^N v_i^\star \, \chi_i^{\eps,\psi} \right) \right\|_{L^2(\Omega)} \right. \\ \left. + \left\| \psi\left(\frac{\cdot}{\eps}\right) \left( \nabla v^\eps - \sum_{i=1}^N v_i^\star \, \nabla \chi_i^{\eps,\psi} \right) \right\|_{L^2(\Omega)} \right),
\end{multline}
where we have used the Poincaré inequality at the last line, since the functions $v^\eps$ and $\dis \sum_{i=1}^N v_i^\star \, \chi_i^{\eps,\psi}$ both vanish on the boundary of $\Omega$. Using~\eqref{eq:regul_w_psi}, the above first term satisfies
\begin{equation} \label{demo methode triche terme 1}
\left\| \frac{1}{\eps} \, \nabla \psi\left(\frac{\cdot}{\eps}\right) \left( v^\eps - \sum_{i=1}^N v_i^\star \, \chi_i^{\eps,\psi} \right) \right\|_{L^2(\Omega)} \leq \frac{C}{\eps} \left\| v^\eps - \sum_{i=1}^N v_i^\star \, \chi_i^{\eps,\psi} \right\|_{L^2(\Omega)},
\end{equation}
while the second term satisfies
\begin{equation} \label{demo methode triche terme 2}
\left\| \psi\left(\frac{\cdot}{\eps}\right) \! \left( \nabla v^\eps - \sum_{i=1}^N v_i^\star \nabla \chi_i^{\eps,\psi} \right) \right\|_{L^2(\Omega)} \leq C \left\| \nabla v^\eps - \sum_{i=1}^N v_i^\star \nabla \chi_i^{\eps,\psi} \right\|_{L^2(\Omega)}.
\end{equation}
We successively estimate the two above terms and next conclude.

\medskip

\noindent
{\bf Step~1: estimation of~\eqref{demo methode triche terme 1}.} Introducing the two-scale expansion
\begin{equation} \label{eq:def_v_eps1}
  v^{\eps,1} := v^\star + \eps \sum_{j=1}^d \widetilde{w}_j(\cdot/\eps) \, \partial_j v^\star,
\end{equation}
we write
\begin{align}
  \left\| v^\eps - \sum_{i=1}^N v_i^\star \, \chi_i^{\eps,\psi} \right\|_{L^2(\Omega)}
  &\leq
  \| v^\eps - v^{\eps,1} \|_{L^2(\Omega)} + \left\| v^{\eps,1} - \sum_{i=1}^N v_i^\star \, \chi_i^{\eps,\psi} \right\|_{L^2(\Omega)}
  \nonumber
  \\
  &=
  R(\eps) + \left\| v^{\eps,1} - \sum_{i=1}^N v_i^\star \, \chi_i^{\eps,\psi} \right\|_{L^2(\Omega)},
  \label{eq:normandie}
\end{align}
where we have used the quantity $R(\eps)$ defined by~\eqref{thm:strong convergence veps L2}, that we know to converge to 0 when $\eps \to 0$. To estimate the second term of~\eqref{eq:normandie}, we introduce the two-scale expansion of the MsFEM-lin basis functions $\chi_i^{\eps,\psi}$ defined by~\eqref{chi psi}. Using the fact that the homogenized limit of $\chi_i^{\eps,\psi}$ is $\chi_i^{\PP_1}$, and that the corresponding corrector function is $\widetilde{w}_j$, we write
\begin{equation} \label{eq:def_Theta}
\chi_i^{\eps,\psi} = \chi_i^{\PP_1} + \eps \sum_{j=1}^d \widetilde{w}_j\left(\frac{\cdot}{\eps}\right) \partial_j \chi_i^{\PP_1} + \Theta_i^\eps,
\end{equation}
with $\dis \lim_{\eps \to 0} \| \Theta_i^\eps \|_{H^1(K)} = 0$ on each element $K$. A more precise estimate will be considered below. We can thus write
\begin{multline} \label{eq:normandie2}
\left\| v^{\eps,1} - \sum_{i=1}^N v_i^\star \, \chi_i^{\eps,\psi} \right\|_{L^2(\Omega)} \leq \left\| v^\star - \sum_{i=1}^N v_i^\star \, \chi_i^{\PP_1} \right\|_{L^2(\Omega)} \\ + \left\| \sum_{j=1}^d \eps \, \widetilde{w}_j\left(\frac{\cdot}{\eps}\right) \left( \partial_j v^\star - \sum_{i=1}^N v_i^\star \, \partial_j \chi_i^{\PP_1} \right) \right\|_{L^2(\Omega)} + \left\| \sum_{i=1}^N v_i^\star \, \Theta_i^\eps \right\|_{L^2(\Omega)}.
\end{multline}
The first term of~\eqref{eq:normandie2} is bounded by $C \, H^2$ using~\eqref{eq:approx_P1}. Using~\eqref{eq:regul_w_psi} and again~\eqref{eq:approx_P1}, we bound the second term of~\eqref{eq:normandie2}:
\begin{multline} \label{eq:estimation v* chi P1 norme H1}
\left\| \sum_{j=1}^d \eps \, \widetilde{w}_j\left(\frac{\cdot}{\eps}\right) \left( \partial_j v^\star - \sum_{i=1}^N v_i^\star \, \partial_j \chi_i \right) \right\|_{L^2(\Omega)} \\ \leq C \, \eps \left\| \sum_{j=1}^d \left( \partial_j v^\star - \sum_{i=1}^N v_i^\star \, \partial_j \chi_i \right) \right\|_{L^2(\Omega)} \leq C \, \eps \, H.
\end{multline}
Let us examine the equation satisfied by $\dis \sum_{i=1}^N v_i^\star \, \Theta_i^\eps$ to estimate its $L^2(\Omega)$ norm (and thus the third term of~\eqref{eq:normandie2}). Recall that we denote $\widetilde{A}(y) = \psi(y)^2 \, A(y)$ and $\widetilde{A}_\eps = \widetilde{A}(\cdot/\eps)$. On any element $K$ of the coarse mesh, we have
$$
- \operatorname{div} \left( \widetilde{A}_\eps \nabla \sum_{i=1}^N v_i^\star \, \Theta_i^\eps \right) = - \sum_{i=1}^N v_i^\star \, \operatorname{div} \left( \widetilde{A}_\eps \, \nabla \Theta_i^\eps \right),
$$
and 
\begin{align*}
  - \operatorname{div} \left( \widetilde{A}_\eps \, \nabla \Theta_i^\eps \right)
  &=
  - \operatorname{div} \left( \widetilde{A}_\eps \, \nabla \left( \chi_i^{\eps,\psi} - \chi_i^{\PP_1} - \eps \sum_{j=1}^d \widetilde{w}_j\left(\frac{\cdot}{\eps}\right) \partial_j \chi_i^{\PP_1} \right) \right)
  \\
  &=
  \operatorname{div} \left( \widetilde{A}_\eps \, \nabla \chi_i^{\PP_1} \right) + \sum_{j=1}^d \operatorname{div} \left( \widetilde{A}_\eps \nabla \widetilde{w}_j\left(\frac{\cdot}{\eps}\right) \partial_j \chi_i^{\PP_1} \right),
\end{align*} 
using the equation satisfied by $\chi_i^{\eps,\psi}$, and the fact that $\partial_j \chi_i^{\PP_1}$ is a constant. Using again that argument for the first term above, we obtain
$$
- \operatorname{div} \left( \widetilde{A}_\eps \, \nabla \Theta_i^\eps \right) = \sum_{j=1}^d \partial_j \chi_i^{\PP_1} \operatorname{div} \left( \widetilde{A}_\eps \, e_j \right) + \sum_{j=1}^d \partial_j \chi_i^{\PP_1} \, \operatorname{div} \left( \widetilde{A}_\eps \nabla \widetilde{w}_j\left(\frac{\cdot}{\eps}\right) \right) = 0,
$$
where we have eventually used the corrector equation~\eqref{eq correcteur}. We thus deduce that, on each element $K$, the function $\dis \theta^\eps = \sum_{i=1}^N v_i^\star \, \Theta_i^\eps$ satisfies
\begin{equation} \label{eq:theta eps}
  \left\{
  \begin{aligned}
    -\operatorname{div} \left( \widetilde{A}_\eps \nabla \theta^\eps \right) &= 0 && \text{in $K$},
    \\
    \theta^\eps &= - \eps \sum_{i=1}^N v_i^\star \, \sum_{j=1}^d \widetilde{w}_j\left(\frac{\cdot}{\eps}\right) \partial_j \chi_i^{\PP_1} && \text{on $\partial K$},
  \end{aligned}
  \right.
\end{equation}
where we have used that $\chi_i^{\eps,\psi} = \chi_i^{\PP_1}$ on $\partial K$. Using the maximum principle for~\eqref{eq:theta eps} and the assumption~\eqref{eq:regul_w_psi}, we deduce that, for any $x \in K$,
\begin{multline*}
  \left| \sum_{i=1}^N v_i^\star \, \Theta_i^\eps (x) \right| \leq \eps \left\| \sum_{j=1}^d \widetilde{w}_j\left(\frac{x}{\eps}\right) \sum_{i=1}^N v_i^\star \, \partial_j \chi_i^{\PP_1} \right\|_{L^\infty(\partial K)} \\ \leq C \, \eps \, \sup_j \left\| \sum_{i=1}^N v_i^\star \, \partial_j \chi_i^{\PP_1} \right\|_{L^\infty(\partial K)} = C \, \eps \left\| \sum_{i=1}^N v_i^\star \, \nabla \chi_i^{\PP_1} \right\|_{L^\infty(\partial K)}.
\end{multline*}
Using that $\nabla \chi_i^{\PP_1}$ is constant on $K$, we infer
\begin{align*}
\left\| \sum_{i=1}^N v_i^\star \, \Theta_i^\eps \right\|_{L^2(K)}^2 
&\leq
C \, \eps^2 \, | K | \left\| \sum_{i=1}^N v_i^\star \, \nabla \chi_i^{\PP_1} \right\|_{L^\infty(\partial K)}^2
\\
&=
C \, \eps^2 \left\| \sum_{i=1}^N v_i^\star \, \nabla \chi_i^{\PP_1} \right\|_{L^2(K)}^2.
\end{align*} 
Summing over the elements $K$, we get
\begin{align}
\left\| \sum_{i=1}^N v_i^\star \, \Theta_i^\eps \right\|_{L^2(\Omega)}^2
&\leq
C \, \eps^2 \left\| \sum_{i=1}^N v_i^\star \, \nabla \chi_i^{\PP_1} \right\|_{L^2(\Omega)}^2
\nonumber
\\
&\leq
C \, \eps^2 \left[ \left\| \nabla v^\star - \sum_{i=1}^N v_i^\star \, \nabla \chi_i^{\PP_1} \right\|_{L^2(\Omega)}^2 + \| \nabla v^\star \|_{L^2(\Omega)}^2 \right]
\nonumber
\\
&\leq C \, \eps^2 \, (H^2 +1),
\label{eq:estimation somme v* theta eps}
\end{align}
and therefore (since $H \leq 1$)
\begin{equation} \label{eq:normandie3}
\left\| \sum_{i=1}^N v_i^\star \, \Theta_i^\eps \right\|_{L^2(\Omega)} \leq C \, \eps. 
\end{equation}
We have completed the estimation of the right-hand side of~\eqref{demo methode triche terme 1}: collecting~\eqref{eq:normandie}, \eqref{eq:normandie2}, \eqref{eq:estimation v* chi P1 norme H1} and~\eqref{eq:normandie3}, we deduce
\begin{align}
\frac{1}{\eps} \left\| v^\eps - \sum_{i=1}^N v_i^\star \, \chi_i^{\eps,\psi} \right\|_{L^2(\Omega)}
&\leq
\frac{C}{\eps} \left( R(\eps) + H^2 + \eps \, H + \eps \right)
\nonumber
\\
&\leq
C \left( \frac{H^2}{\eps} + 1 + \frac{R(\eps)}{\eps} \right),
\label{eq:normandie8}
\end{align}
where the simplification in the last line stems from the fact that $H \leq 1$.

\medskip

\noindent
{\bf Step~2: estimation of~\eqref{demo methode triche terme 2}.} Again using the two-scale expansion $v^{\eps,1}$ defined by~\eqref{eq:def_v_eps1}, we write
\begin{align}
  \left\| \nabla v^\eps - \sum_{i=1}^N v_i^\star \, \nabla \chi_i^{\eps,\psi} \right\|_{L^2(\Omega)}
  &\leq
  \left\| \nabla v^\eps - \nabla v^{\eps,1} \right\|_{L^2(\Omega)} + \left\| \nabla v^{\eps,1} - \sum_{i=1}^N v_i^\star \, \nabla \chi_i^{\eps,\psi} \right\|_{L^2(\Omega)}
  \nonumber
  \\
  &=
  \widetilde{R}(\eps) + \left\| \nabla v^{\eps,1} - \sum_{i=1}^N v_i^\star \, \nabla \chi_i^{\eps,\psi} \right\|_{L^2(\Omega)},
  \label{eq:normandie4}
\end{align}
where we have used the quantity $\widetilde{R}(\eps)$ defined by~\eqref{thm:strong convergence veps H1}, that we know to converge to 0 when $\eps \to 0$. In view of~\eqref{eq:def_Theta}, we recall that
\begin{multline*}
  v^{\eps,1} - \sum_{i=1}^N v_i^\star \, \chi_i^{\eps,\psi} = \left( v^\star - \sum_{i=1}^N v_i^\star \, \chi_i^{\PP_1} \right) \\ + \sum_{j=1}^d \eps \, \widetilde{w}_j\left(\frac{\cdot}{\eps}\right) \left( \partial_j v^\star - \sum_{i=1}^N v_i^\star \, \partial_j \chi_i^{\PP_1} \right) - \sum_{i=1}^N v_i^\star \, \Theta_i^\eps.
\end{multline*} 
We thus have
\begin{align}
  & \nabla v^{\eps,1} - \sum_{i=1}^N v_i^\star \, \nabla \chi_i^{\eps,\psi}
  \nonumber
  \\
  &=
  \nabla v^\star - \sum_{i=1}^N v_i^\star \, \nabla \chi_i^{\PP_1} + \sum_{j=1}^d \nabla \widetilde{w}_j\left(\frac{\cdot}{\eps}\right) \left( \partial_j v^\star - \sum_{i=1}^N v_i^\star \, \partial_j \chi_i^{\PP_1} \right)
  \nonumber
  \\
  &+ \sum_{j=1}^d \eps \, \widetilde{w}_j\left(\frac{\cdot}{\eps}\right) \partial_j \nabla v^\star - \sum_{i=1}^N v_i^\star \left( \nabla \Theta_i^\eps + \sum_{j=1}^d \eps \, \widetilde{w}_j\left(\frac{\cdot}{\eps}\right) \partial_j \nabla \chi_i^{\PP_1} \right).
  \label{eq:decomposition gradient veps chi eps psi}
\end{align}
Care must be taken before writing the $L^2$ norm of these terms. Indeed, the gradients $\nabla \chi_i^{\PP_1}$ are not in $H^1(\Omega)$. However, by grouping the last terms, we have
$$
\sum_{i=1}^N v_i^\star \left( \nabla \Theta_i^\eps + \sum_{j=1}^d \eps \, \widetilde{w}_j\left(\frac{\cdot}{\eps}\right) \partial_j \nabla \chi_i^{\PP_1} \right) \in L^2(\Omega),
$$
because all the other terms in~\eqref{eq:decomposition gradient veps chi eps psi} are in $L^2(\Omega)$. In this way, we can write the $L^2$ norm of this sum, which yields
\begin{align}
  & \left\| \nabla v^{\eps,1} - \sum_{i=1}^N v_i^\star \, \nabla \chi_i^{\eps,\psi} \right\|_{L^2(\Omega)}
  \nonumber
  \\
  &\leq
  \left\| \nabla v^\star - \sum_{i=1}^N v_i^\star \, \nabla \chi_i^{\PP_1} \right\|_{L^2(\Omega)}
  \nonumber
  \\
  &+
  \left\| \sum_{j=1}^d \nabla \widetilde{w}_j\left(\frac{\cdot}{\eps}\right) \left( \partial_j v^\star - \sum_{i=1}^N v_i^\star \, \partial_j \chi_i^{\PP_1} \right) \right\|_{L^2(\Omega)}
  \nonumber
  \\
  &+
  \left\| \sum_{j=1}^d \eps \, \widetilde{w}_j\left(\frac{\cdot}{\eps}\right) \partial_j \nabla v^\star \right\|_{L^2(\Omega)}
  \nonumber
  \\
  &+
  \left\| \sum_{i=1}^N v_i^\star \left( \nabla \Theta_i^\eps + \sum_{j=1}^d \eps \, \widetilde{w}_j\left(\frac{\cdot}{\eps}\right) \partial_j \nabla \chi_i^{\PP_1}\right) \right\|_{L^2(\Omega)}.
\label{eq:estimation gradient veps chi eps psi}
\end{align}
Using~\eqref{eq:approx_P1} and~\eqref{eq:regul_w_psi}, we can bound the first two terms in the above estimate by $C \, H$. For the third term, we have
\begin{equation} \label{eq:normandie5}
\left\| \sum_{j=1}^d \eps \, \widetilde{w}_j\left(\frac{\cdot}{\eps}\right) \partial_j \nabla v^\star \right\|_{L^2(\Omega)} \leq C \, \eps,
\end{equation}
since we have assumed $v^\star \in H^2(\Omega)$. It remains to estimate the last term of~\eqref{eq:estimation gradient veps chi eps psi}. We split the $L^2$ norm of this term into the $L^2$ norm over each element $K$ of the mesh:
\begin{align}
  & \left\| \sum_{i=1}^N v_i^\star \left( \nabla \Theta_i^\eps + \sum_{j=1}^d \eps \, \widetilde{w}_j\left(\frac{\cdot}{\eps}\right) \partial_j \nabla \chi_i^{\PP_1} \right) \right\|^2_{L^2(\Omega)} 
  \nonumber
  \\
  &=
  \sum_{K \in \mathcal{T}_H} \left\| \sum_{i=1}^N v_i^\star \left( \nabla \Theta_i^\eps + \sum_{j=1}^d \eps \, \widetilde{w}_j\left(\frac{\cdot}{\eps}\right) \partial_j \nabla \chi_i^{\PP_1} \right) \right\|^2_{L^2(K)}
  \nonumber
  \\
  &= \sum_{K \in \mathcal{T}_H} \left\| \sum_{i=1}^N v_i^\star \, \nabla \Theta_i^\eps \right\|^2_{L^2(K)},
  \label{eq:normandie6}
\end{align}
since $\nabla \chi_i^{\PP_1}$ is constant on each element $K$. It thus remains to estimate $\dis \left\| \sum_{i=1}^N v_i^\star \, \nabla \Theta_i^\eps \right\|_{L^2(K)}$ for each element $K$ to conclude.

Consider some $\delta < H$ to be fixed later on, and let $K_\delta$ be the subset of the element $K$ defined as
$$
K_\delta = \{ x \in K, \quad \operatorname{dist}(x, \partial K) \geq \delta \}.
$$
Let $\xi \in C^\infty(K)$ such that $0 \leq \xi \leq 1$ on $K$, with $\xi = 0$ on $K_\delta$, $\xi = 1$ on $K \setminus K_{\delta/2}$, and such that $\| \nabla \xi \|_{L^\infty(K)} \leq C \, \delta^{-1}$. The function $\xi$ is equal to 1 in a neighboorhood of $\partial K$, and vanishes in the bulk of $K$. Since the mesh is regular, we can choose $C$ independent of the mesh element $K$ and of $H$. Using~\eqref{eq:theta eps}, we then observe that
\begin{equation*}
  \sum_{i=1}^N v_i^\star \, \Theta_i^\eps + \xi \, \eps \, \sum_{i=1}^N v_i^\star \, \sum_{j=1}^d \widetilde{w}_j\left(\frac{\cdot}{\eps}\right) \partial_j \chi_i^{\PP_1} \in H^1_0(K).
\end{equation*}
We can thus use the above function in the variational formulation of~\eqref{eq:theta eps} and obtain
\begin{equation} \label{eq:produit scalaire theta eps}
  \int_K \widetilde{A}_\eps \nabla \left( \sum_{i=1}^N v_i^\star \, \Theta_i^\eps \right) \cdot \nabla \left( \sum_{i=1}^N v_i^\star \, \Theta_i^\eps + \xi \, \eps \, \sum_{i=1}^N v_i^\star \, \sum_{j=1}^d \widetilde{w}_j\left(\frac{\cdot}{\eps}\right) \partial_j \chi_i^{\PP_1} \right) = 0.
\end{equation}
Using the coercivity and boundedness of $\widetilde{A}_\eps$ (recall~\eqref{hypothese coercivite D} and~\eqref{eq:borne_psi}), as well as~\eqref{eq:produit scalaire theta eps}, we deduce
\begin{align*}
  \left\| \sum_{i=1}^N v_i^\star \, \nabla \Theta_i^\eps \right\|_{L^2(K)}^2
  &
  \leq \frac{1}{\widetilde{\alpha}} \int_K \widetilde{A}_\eps \nabla \left( \sum_{i=1}^N v_i^\star \, \Theta_i^\eps \right) \cdot \nabla \left( \sum_{i=1}^N v_i^\star \, \Theta_i^\eps \right)
  \\
  &
  = \frac{-1}{\widetilde{\alpha}} \int_K \widetilde{A}_\eps \nabla \left( \sum_{i=1}^N v_i^\star \, \Theta_i^\eps \right) \cdot \nabla \left( \xi \, \eps \, \sum_{i=1}^N v_i^\star \, \sum_{j=1}^d \widetilde{w}_j\left(\frac{\cdot}{\eps}\right) \partial_j \chi_i^{\PP_1} \right)
  \\
  &
  \leq C \left\| \sum_{i=1}^N v_i^\star \, \nabla \Theta_i^\eps \right\|_{L^2(K)} \ \left\| \nabla \left( \xi \, \eps \, \sum_{i=1}^N v_i^\star \, \sum_{j=1}^d \widetilde{w}_j\left(\frac{\cdot}{\eps}\right) \partial_j \chi_i^{\PP_1} \right) \right\|_{L^2(K)},
\end{align*}
by the Cauchy-Schwarz inequality. Using~\eqref{eq:regul_w_psi} and the fact that $\xi$ vanishes in $K_\delta$, we infer
\begin{align*}
  & \left\| \sum_{i=1}^N v_i^\star \, \nabla \Theta_i^\eps \right\|_{L^2(K)}
  \\
  & \leq C \left\| \nabla \left( \xi \, \eps \sum_{i=1}^N v_i^\star \, \sum_{j=1}^d \widetilde{w}_j\left(\frac{\cdot}{\eps}\right) \partial_j \chi_i^{\PP_1} \right) \right\|_{L^2(K)}
  \\
  & \leq C \left( \left\| \nabla \xi \, \eps \sum_{i=1}^N v_i^\star \, \sum_{j=1}^d \widetilde{w}_j\left(\frac{\cdot}{\eps}\right) \partial_j \chi_i^{\PP_1} \right\|_{L^2(K)} + \left\| \xi \, \sum_{i=1}^N v_i^\star \, \sum_{j=1}^d \nabla \widetilde{w}_j\left(\frac{\cdot}{\eps}\right) \partial_j \chi_i^{\PP_1} \right\|_{L^2(K)} \right)
  \\
  & \leq C \left( \eps \, \delta^{-1} \, \sqrt{| K \setminus K_{\delta}|} \, \left\| \sum_{i=1}^N v_i^\star \, \nabla \chi_i^{\PP_1} \right\|_{L^\infty(K)} + \sqrt{| K \setminus K_{\delta}|} \, \left\| \sum_{i=1}^N v_i^\star \, \nabla \chi_i^{\PP_1} \right\|_{L^\infty(K)} \right)
  \\
  &= C \, \sqrt{| K \setminus K_{\delta}|} \, \left\| \sum_{i=1}^N v_i^\star \, \nabla \chi_i^{\PP_1} \right\|_{L^\infty(K)} \, (1 + \eps \, \delta^{-1}).
\end{align*}
Moreover, we have $| \partial K | \leq C \, H^{d-1}$ and $| K \setminus K_{\delta}| \leq \delta \, | \partial K |$. Thus, by taking $\delta = \eps$ (which is a possible choice since we consider the regime $\eps \leq H$), we obtain the estimate
\begin{align} 
  \left\| \sum_{i=1}^N v_i^\star \, \nabla \Theta_i^\eps \right\|_{L^2(K)}
  &\leq C \, \sqrt{\eps \, H^{d-1}} \, \left\| \sum_{i=1}^N v_i^\star \, \nabla \chi_i^{\PP_1} \right\|_{L^\infty(K)}
  \nonumber
  \\
  &= C \, \sqrt{\frac{\eps \, H^{d-1}}{|K|}} \, \left\| \sum_{i=1}^N v_i^\star \, \nabla \chi_i^{\PP_1} \right\|_{L^2(K)}
  \nonumber
  \\
  &= C \, \sqrt{\frac{\eps}{H}} \, \left\| \sum_{i=1}^N v_i^\star \, \nabla \chi_i^{\PP_1} \right\|_{L^2(K)},
\label{eq:estimation gradient theta eps}
\end{align}
where $C$ is independent of $H$, $\eps$ and the mesh element $K$ (we have used that $\nabla \chi_i^{\PP_1}$ is constant on $K$ and the regularity of the mesh). We now sum~\eqref{eq:estimation gradient theta eps} over $K$:
\begin{align}
\sum_{K \in \mathcal{T}_H} \left\| \sum_{i=1}^N v_i^\star \, \nabla \Theta_i^\eps \right\|_{L^2(K)}^2
&
\leq C \, \frac{\eps}{H} \, \sum_{K \in \mathcal{T}_H} \left\| \sum_{i=1}^N v_i^\star \, \nabla \chi_i^{\PP_1} \right\|_{L^2(K)}^2
\nonumber
\\
&
= C \, \frac{\eps}{H} \, \left\| \sum_{i=1}^N v_i^\star \, \nabla \chi_i^{\PP_1} \right\|_{L^2(\Omega)}^2
\nonumber
\\
&
\leq C \, \frac{\eps}{H} \, (H^2 + 1),
\label{eq:normandie7}
\end{align}
by the same argument as for the estimate~\eqref{eq:estimation somme v* theta eps}. Collecting~\eqref{eq:normandie4}, \eqref{eq:estimation gradient veps chi eps psi}, \eqref{eq:normandie5}, \eqref{eq:normandie6} and~\eqref{eq:normandie7}, we obtain
\begin{equation} \label{eq:normandie9}
  \left\| \nabla v^\eps - \sum_{i=1}^N v_i^\star \, \nabla \chi_i^{\eps,\psi} \right\|_{L^2(\Omega)} \leq C \left( H + \eps + \sqrt{\frac{\eps}{H}} + \widetilde{R}(\eps) \right). 
\end{equation}

\medskip

\noindent
{\bf Step~3: conclusion.} In view of~\eqref{eq:normandie10}, \eqref{demo methode triche terme 1}, \eqref{demo methode triche terme 2}, \eqref{eq:normandie8} and~\eqref{eq:normandie9}, we deduce
$$
\|u^\eps - u^{\eps,\psi}_H \|_{H^1(\Omega)} \leq C \left( H + \frac{H^2}{\eps} + 1 + \eps + \sqrt{\frac{\eps}{H}} + \frac{R(\eps)}{\eps} + \widetilde{R}(\eps) \right).
$$
Since $\eps \leq H \leq 1$, we can recast the above bound as
\begin{equation} \label{eq:normandie11}
\|u^\eps - u^{\eps,\psi}_H \|_{H^1(\Omega)} \leq C \left( \frac{H^2}{\eps} + 1 + \sqrt{\frac{\eps}{H}} + \frac{R(\eps)}{\eps} + \widetilde{R}(\eps) \right).
\end{equation}
We now establish a lower bound on the denominator of~\eqref{taux de convergence}, namely $\|u^\eps\|_{H^1(\Omega)}$. We write
\begin{align*}
  \| u^\eps \|_{H^1(\Omega)}
  &
  \geq \| \nabla u^\eps \|_{L^2(\Omega)}
  \\
  &
  = \left\| \frac{1}{\eps} \left(\nabla \psi \right)\left(\frac{\cdot}{\eps}\right) v^\eps + \psi\left(\frac{\cdot}{\eps}\right) \nabla v^\eps \right\|_{L^2(\Omega)}
  \\
  &
  \geq \left| \left\| \frac{1}{\eps} \left(\nabla \psi \right)\left(\frac{\cdot}{\eps}\right) v^\eps \right\|_{L^2(\Omega)} - \left\| \psi\left(\frac{\cdot}{\eps}\right) \nabla v^\eps \right\|_{L^2(\Omega)} \right|
  \\
  &
  \geq \left| \frac{1}{\eps} \left\| \left(\nabla \psi \right)\left(\frac{\cdot}{\eps}\right) v^\eps \right\|_{L^2(\Omega)} - C \right|,
\end{align*}
where the last line stems from the fact that $\psi$ is bounded (see~\eqref{eq:regul_w_psi}) and $\nabla v^\eps$ is bounded in $L^2(\Omega)$ (see the claim below Theorem~\ref{changement variable veps}). When $\eps$ is sufficiently small, the quantity $\left\| \left(\nabla \psi \right)\left(\cdot/\eps\right) v^\eps \right\|_{L^2(\Omega)}$is bounded away from 0. Indeed, using that $v^\eps$ converges strongly to $v^\star$ in $L^2(\Omega)$ and that $|\nabla \psi|^2$ is a periodic function (of average $\langle |\nabla \psi|^2 \rangle$), we have
$$
\int_\Omega \left(\nabla \psi \right)^2\left(\frac{\cdot}{\eps}\right) (v^\eps)^2
\underset{\eps \rightarrow 0}{\longrightarrow}
\int_\Omega \langle |\nabla \psi|^2 \rangle \, (v^\star)^2
=
\| \nabla \psi \|_{L^2(Y)}^2 \, \| v^\star \|_{L^2(\Omega)}^2
> 0,
$$
where the last inequality stems from our assumption that $\|\nabla \psi\|_{L^2(Y)} > 0$. Thus, for $\eps$ small enough, we have
$$
\| u^\eps \|_{H^1(\Omega)} \geq \frac{1}{2\eps} \, \| \nabla \psi \|_{L^2(Y)} \, \| v^\star \|_{L^2(\Omega)}.
$$
Using the fact that, for $\eps$ small enough, $\widetilde{R}(\eps) \leq 1$, we are in position to deduce from~\eqref{eq:normandie11} that
$$
\frac{\|u^\eps - u^{\eps,\psi}_H \|_{H^1(\Omega)}}{\|u^\eps\|_{H^1(\Omega)}} \leq C \left( H^2 + \eps + \eps \, \sqrt{\frac{\eps}{H}} + R(\eps) \right),
$$
which concludes the proof of~\eqref{th taux de convergence methode triche}.
\end{proof}

\section{Partial analysis of the filtering method} \label{sec:demo small perturbations}

In this section, we present some calculations to explain how the filtering method ensures, in the periodic setting, the convergence of the first eigencouple of~\eqref{pb patch oversampling filtre tmp} to the first eigencouple of~\eqref{pb spectral}. We thus focus on the scalar-valued setting (the vector-valued case is briefly commented upon in Remark~\ref{rem:vector_valued}). Our main results are the identity~\eqref{eq:resu_ordre_0}, the estimate~\eqref{estimee convergence valeur propre} on the eigenvalue and the estimate~\eqref{estimee convergence vecteur propre} on the eigenvector (we also refer to the estimate~\eqref{mu eps 1} on the Lagrange multiplier, which helps understand the consistency of~\eqref{pb patch oversampling filtre tmp} with~\eqref{pb spectral} in the limit $\eps \to 0$).

To simplify the notation, we are going to assume that~\eqref{pb patch oversampling filtre tmp} is posed on $\omega=(0,1)^d$ instead of $S_K$, and denote by $(\lambda^\eps, \psi^\eps, \mu^\eps)$ its solution (instead of $(\widetilde{\lambda}^\eps, \widetilde{\psi}^\eps, \widetilde{\mu}^\eps)$). Throughout this section, $C$ represents a positive constant independent of the small scale $\eps$ and of the perturbative parameter $\delta$ introduced in~\eqref{petites perturbations:pb spectral sigma} below. Let $\tau_0$ be a function defined on $(0,1)$ and satisfying~\eqref{hypotheses filtre varphi0}, and let $\tau$ be the filter function defined on $\omega$ by
\begin{equation} \label{definition filtre d dimension}
  \forall x=(x_1,\dots,x_d) \in \omega, \quad \tau(x) = \prod_{i=1}^d \tau_0(x_i).
\end{equation}
Let $(\lambda^\infty,\psi)$ be the first eigencouple of the cell problem~\eqref{pb spectral}, where we fix the normalisation of $\psi$ by assuming $\| \psi \|_{L^2(Y)} = 1$. The approximation method consists in looking for a Lagrange multiplier $\mu^\eps \in \RR^d$ and the eigenvector $\psi^\eps \in H^1(\omega)$ associated to the smallest eigenvalue $\lambda^\eps \in \RR$ such that, for any $v \in H^1(\omega)$ and any $\mu \in \RR^d$,
\begin{equation} \label{demo filtre : pb patch oversampling filtre}
  \left\{
  \begin{aligned}
    \eps^2 \int_\omega \tau \, A^\eps \nabla \psi^\eps \cdot \nabla v + \int_\omega \tau \, \Sigma^\eps \, \psi^\eps \, v
    &=
    \lambda^\eps \int_\omega \tau \, \sigma^\eps \, \psi^\eps \, v + \mu^\eps \cdot \int_\omega \tau \, \nabla v,
    \\
    \mu \cdot \int_\omega \tau \, \nabla \psi^\eps
    &=
    0,
  \end{aligned}
  \right.
\end{equation}
where we recall that $A^\eps = A(\cdot/\eps)$ (and likewise for $\Sigma^\eps$ and $\sigma^\eps$). We fix the normalisation of $\psi^\eps$ by assuming $\| \sqrt{\tau} \, \psi^\eps \|_{L^2(\omega)} = |\omega|^{1/2} = 1$.

\begin{proposition}
  Let $\tau$ be given by~\eqref{definition filtre d dimension} with $\tau_0$ satisfying~\eqref{hypotheses filtre varphi0}. Then $(\lambda^\eps, \psi^\eps, \mu^\eps) \in \RR \times H^1(\omega) \times \RR^d$ is a solution to~\eqref{demo filtre : pb patch oversampling filtre} if and only if $(\lambda^\eps, \psi^\eps)$ is the first eigencouple to the problem
  \begin{equation} \label{demo pb reaction diffusion stationnaire filtre}
    \left\{
    \begin{aligned}
      \tau \, \Sigma^\eps \, \psi^\eps - \operatorname{div} \Big( \tau \left[ \eps^2 \, A^\eps \nabla \psi^\eps - \mu^\eps \right] \Big)
      &=
      \lambda^\eps \, \tau \, \sigma^\eps \, \psi^\eps \quad \text{in $\omega$},
      \\
      \int_\omega \tau \, \nabla \psi^\eps
      &=
      0.
    \end{aligned}
    \right.
  \end{equation}
\end{proposition}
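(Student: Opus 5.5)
The equivalence comes from integrating by parts in the weak formulation~\eqref{demo filtre : pb patch oversampling filtre}. The filter $\tau$ vanishes on $\partial\omega$: by~\eqref{hypotheses filtre varphi0} with $k\geq 1$, $\tau_0(0)=\tau_0(1)=0$, so by the product structure~\eqref{definition filtre d dimension} $\tau=0$ on every face of $\omega$. Hence no boundary terms appear and no boundary condition is hidden in either formulation.

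\textbf{Direction $\Rightarrow$.} First, the second line of~\eqref{demo filtre : pb patch oversampling filtre} holds for every $\mu\in\RR^d$, so it is equivalent to $\int_\omega \tau\,\nabla\psi^\eps=0$. This is the constraint in~\eqref{demo pb reaction diffusion stationnaire filtre}. Next, take $v\in C^\infty_c(\omega)$ in the first line and move all terms to one side. Since $\mu^\eps$ is a constant vector, $\mu^\eps\cdot\int_\omega\tau\nabla v=\int_\omega \tau\,\mu^\eps\cdot\nabla v$, and the equation becomes
\[
\int_\omega \tau\,[\eps^2 A^\eps\nabla\psi^\eps-\mu^\eps]\cdot\nabla v+\int_\omega\tau\,(\Sigma^\eps-\lambda^\eps\sigma^\eps)\psi^\eps v=0 .
\]
This is exactly the distributional form of the PDE in~\eqref{demo pb reaction diffusion stationnaire filtre}.

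\textbf{Direction $\Leftarrow$.} Here the task is to recover the weak formulation for all $v\in H^1(\omega)$, not only compactly supported ones. The flux $F=\tau[\eps^2A^\eps\nabla\psi^\eps-\mu^\eps]$ lies in $L^2$, and by the PDE $\operatorname{div}F\in L^2$. So $F\in H(\mathrm{div},\omega)$, and Green's formula gives
\[
\int_\omega F\cdot\nabla v=-\int_\omega v\operatorname{div}F+\langle F\cdot n,v\rangle_{\partial\omega}.
\]
I expect the main obstacle to be showing that the normal trace $F\cdot n$ vanishes, since $\tau$ degenerates on $\partial\omega$. I would handle it by approximation. Let $\zeta_n\in C^\infty_c(\omega)$ be cutoffs equal to $1$ away from a layer of width $1/n$, with $|\nabla\zeta_n|\lesssim n$. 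Test the PDE with $\zeta_n v$. The only extra term is
\[
\int_\omega F\cdot\nabla\zeta_n\, v .
\]
Near a face, $\tau\lesssim \operatorname{dist}(x,\partial\omega)^k\lesssim n^{-k}$ with $k\geq1$. This gives the bound
\[
\Big|\int_\omega F\cdot\nabla\zeta_n\,v\Big| \leq C\,n^{1-k}\,\|\sqrt{\tau}\nabla\psi^\eps\|+\ldots \;\to 0 .
\]
The bound tends to zero because $\tau\nabla\psi^\eps\in L^2$ on a shrinking set. Dominated convergence applies when $k\geq1$, after also using $\tau\mu^\eps\to0$ on the layer. Letting $n\to\infty$ yields the first line of~\eqref{demo filtre : pb patch oversampling filtre} for all $v\in H^1(\omega)$.

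Finally, the eigenvalue orderings must match. Both problems have the same set of admissible eigencouples, namely $\psi$ in the constrained space $\{\int\tau\nabla\psi=0\}$ with the same $\lambda$. Therefore "first eigencouple" means the same thing in both, namely the minimizer of the Rayleigh quotient~\eqref{eq:filtered_var}. This proves the stated equivalence.
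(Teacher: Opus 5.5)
Your proposal is correct and follows the same route as the paper. The paper's proof is the one-line remark that the equivalence follows by integration by parts because $\tau$ vanishes on $\partial\omega$. Your cutoff argument for the direction $\Leftarrow$ is a rigorous justification that the boundary term $\tau\,[\eps^2 A^\eps\nabla\psi^\eps-\mu^\eps]\cdot n$ drops out, and your observation that both formulations have the same eigentriples settles the ``first eigencouple'' part.

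One small fix: the displayed estimate should read $\bigl|\int_\omega F\cdot\nabla\zeta_n\,v\bigr|\leq C\,n^{1-k}\int_{\{\nabla\zeta_n\neq0\}}|v|\,\bigl(|\nabla\psi^\eps|+|\mu^\eps|\bigr)$. It tends to $0$ for $k\geq1$ because the integrand lies in $L^1(\omega)$ and the layer shrinks, not because of the prefactor, which equals $1$ when $k=1$.
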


\begin{proof}
  The result is straightforward by integration by parts, using the fact that $\tau$ vanishes at the boundary of $\omega$.
\end{proof}

In this section, we consider the framework of small perturbations for the functions $\Sigma$ and $\sigma$. We thus assume that $\Sigma$ and $\sigma$ satisfy the following expansions:
\begin{equation} \label{petites perturbations:pb spectral sigma}
  \Sigma(y) = \Sigma_0 + \delta \, \Sigma_1(y) + \text{h.o.t.},
  \qquad
  \sigma(y) = \sigma_0 + \delta \, \sigma_1(y) + \text{h.o.t.},
\end{equation}
where $\Sigma_0$ and $\sigma_0$ are constant, $\delta \ll 1$ is a small perturbation parameter, and the higher-order terms ($\text{h.o.t.}$) are of magnitude bounded by $C \, \delta^2$. Similarly, we assume that the solution $(\lambda^\infty,\psi)$ to~\eqref{pb spectral} satisfies the following expansion:
\begin{equation} \label{petites perturbations:pb spectral psi}
  \psi = \psi_0 + \delta \, \psi_1 + \dots,
  \qquad
  \lambda^\infty = \lambda_0^\infty + \delta \, \lambda_1^\infty + \dots
\end{equation}
Inserting~\eqref{petites perturbations:pb spectral sigma} and~\eqref{petites perturbations:pb spectral psi} into~\eqref{pb spectral}, and identifying in powers of $\delta$, we obtain the following results: at the order $\delta^0$,
\begin{equation} \label{pb psi ordre 0}
  \Sigma_0 \, \psi_0 - \operatorname{div} \left( A \nabla \psi_0 \right) = \lambda_0^\infty \, \sigma_0 \, \psi_0,
\end{equation}
and, at the order $\delta^1$,
\begin{equation} \label{pb psi ordre 1}
  \Sigma_0 \, \psi_1 + \Sigma_1 \, \psi_0 - \operatorname{div} \left( A \nabla \psi_1 \right) = \lambda_0^\infty \, \sigma_0 \, \psi_1 + \lambda_0^\infty \, \sigma_1 \, \psi_0 + \lambda_1^\infty \, \sigma_0 \, \psi_0,
\end{equation}
with $Y$-periodic boundary conditions on $\psi_0$ and $\psi_1$.

Inserting~\eqref{petites perturbations:pb spectral psi} into the normalisation condition $\| \psi \|_{L^2(Y)} = 1$ and expanding in $\delta$, we have
\begin{equation} \label{normalisation psi}
  \int_Y \psi_0^2 = 1, \qquad \int_Y \psi_0 \, \psi_1 = 0.
\end{equation}
We multiply~\eqref{pb psi ordre 0} by $\psi_0$, integrate over $Y$ and use~\eqref{normalisation psi} to deduce $\dis \Sigma_0 + \int_Y \nabla \psi_0 \cdot A \nabla \psi_0 = \lambda_0^\infty \, \sigma_0$. Since $A$ is coercive, this yields $\lambda_0^\infty \, \sigma_0 \geq \Sigma_0$. Since $(\lambda_0^\infty,\psi_0)$ is the first eigencouple of~\eqref{pb psi ordre 0}, we conclude that
\begin{equation} \label{psi ordre 0}
  \lambda_0^\infty \, \sigma_0 = \Sigma_0, \quad \psi_0 = 1.
\end{equation}
In view of~\eqref{psi ordre 0}, we recast~\eqref{pb psi ordre 1} as
\begin{equation} \label{psi ordre 1}
  \Sigma_1 - \operatorname{div} \left( A \nabla \psi_1 \right) = \lambda_0^\infty \, \sigma_1 + \lambda_1^\infty \, \sigma_0.
\end{equation}
Integrating this equation over $Y$, using the periodic boundary conditions and denoting by $\langle \cdot \rangle$ the mean of a function over $Y$, we obtain
\begin{equation} \label{lambda 1 etoile}
  \lambda_1^\infty \, \sigma_0 = \langle \Sigma_1 \rangle - \lambda_0^\infty \langle \sigma_1 \rangle.
\end{equation}
We now proceed similarly for~\eqref{demo pb reaction diffusion stationnaire filtre}, assuming
\begin{equation*} 
  \psi^\eps = \psi_0^\eps + \delta \, \psi_1^\eps + \dots,
  \quad
  \lambda^\eps = \lambda_0^\eps + \delta \, \lambda_1^\eps + \dots,
  \quad
  \mu^\eps = \mu^\eps_0 + \delta \, \mu^\eps_1 + \dots,
\end{equation*}
and denoting $\Sigma_i^\eps = \Sigma_i(\cdot/\eps)$ for $i=0,1$ (and likewise for $\sigma_i$). The equation at order $\delta^0$ reads
\begin{equation} \label{phi filtre ordre 0}
  \left\{
  \begin{aligned}
    \tau \, \Sigma_0 \, \psi_0^\eps - \operatorname{div} \Big( \tau \left[ \eps^2 \, A^\eps \nabla \psi_0^\eps - \mu^\eps_0 \right] \Big) &= \lambda_0^\eps \, \tau \, \sigma_0 \, \psi_0^\eps \quad \text{in $\omega$},
    \\
    \int_\omega \tau \, \nabla \psi_0^\eps &= 0,
  \end{aligned}
  \right.
\end{equation}
and that at order $\delta^1$ reads
\begin{multline} \label{pb phi filtre ordre 1}
  \tau \left( \Sigma_0 \, \psi_1^\eps + \Sigma_1^\eps \, \psi_0 \right) - \operatorname{div} \Big( \tau \left[ \eps^2 \, A^\eps \nabla \psi_1^\eps - \mu^\eps_1 \right] \Big) \\ = \tau \left( \lambda_0^\eps \, \sigma_0 \, \psi_1^\eps + \lambda_0^\eps \, \sigma_1^\eps \, \psi_0^\eps + \lambda_1^\eps \, \sigma_0 \, \psi_0^\eps \right) \quad \text{in $\omega$},
\end{multline}
with the constraint $\dis \int_\omega \tau \, \nabla \psi_1^\eps = 0$. Expanding with respect to $\delta$ the normalisation condition $\| \sqrt{\tau} \, \psi^\eps \|_{L^2(\omega)} = 1$, we get
\begin{equation} \label{normalisation phi}
  \int_\omega \tau \, (\psi_0^\eps)^2 = 1, \qquad \int_\omega \tau \, \psi_0^\eps \, \psi_1^\eps = 0.
\end{equation}
Multiplying~\eqref{phi filtre ordre 0} by $\psi_0^\eps$, integrating on $\omega$ and using~\eqref{normalisation phi}, we infer
$$
\Sigma_0 + \eps^2 \int_\omega \tau \, \nabla \psi_0^\eps \cdot A^\eps \nabla \psi_0^\eps = \lambda_0^\eps \, \sigma_0.
$$
Proceeding as above, we see that the smallest possible value of $\lambda_0^\eps$ is reached for the choice
\begin{equation} \label{phi ordre 0}
  \lambda_0^\eps \, \sigma_0 = \Sigma_0, \quad \psi_0^\eps = 1, \quad \mu^\eps_0 = 0.
\end{equation}
Using~\eqref{phi ordre 0}, we deduce from~\eqref{pb phi filtre ordre 1} that
\begin{equation} \label{pb phi filtre ordre 1 bis}
  \left\{
  \begin{aligned}
    \tau \, \Sigma_1^\eps - \operatorname{div} \Big( \tau \left[ \eps^2 \, A^\eps \nabla \psi_1^\eps - \mu^\eps_1 \right] \Big) &= \tau \left( \lambda_0^\eps \, \sigma_1^\eps + \lambda_1^\eps \, \sigma_0 \right) \quad \text{in $\omega$},
    \\
    \int_\omega \tau \, \nabla \psi_1^\eps &= 0.
  \end{aligned}
  \right.
\end{equation}
Integrating this equation over $\omega$ and using the fact that $\tau$ vanishes on $\partial \omega$, we obtain
\begin{equation} \label{lambda 1 eps}
  \lambda_1^\eps \, \sigma_0 + \lambda_0^\eps \int_\omega \tau \, \sigma_1^\eps = \int_\omega \tau \, \Sigma_1^\eps.
\end{equation}
Comparing~\eqref{psi ordre 0} and~\eqref{phi ordre 0}, we see that, at the leading order in $\delta$, we have an equality between the solution $(\lambda^\infty, \psi)$ to~\eqref{pb spectral} and the solution $(\lambda^\eps, \psi^\eps)$ to~\eqref{demo pb reaction diffusion stationnaire filtre}:
\begin{equation} \label{eq:resu_ordre_0}
  \lambda_0^\infty = \lambda_0^\eps = \frac{\Sigma_0}{\sigma_0}, \qquad \psi_0 = \psi_0^\eps = 1.
\end{equation}
In the following sections, we compare the next terms in the expansion in $\delta$. 

\subsection{Convergence of the eigenvalue}

We show here that $\lambda_1^\eps$ converges to $\lambda_1^\infty$ when $\eps \to 0$. We start by recalling the following result, which is a reformulation of~\cite[Theo.~1]{cances2005long} or~\cite[Prop.~3]{blancImprovingComputationHomogenized2010}:

\begin{theorem} \label{theoreme convergence valeur propre}
  Let $\omega = (0,1)^d$. Let $\tau$ be given by~\eqref{definition filtre d dimension} with $\tau_0$ satisfying~\eqref{hypotheses filtre varphi0}. Let $g$ be a $Y$-periodic function in $L^2(Y)$, with mean $\langle g \rangle$, and let $g^\eps$ be defined by $g^\eps(x) = g(x/\eps)$. We then have
  \begin{equation*} 
    \left| \int_\omega \tau \, g^\eps - \langle g \rangle \right| \leq C \, \eps^{k+1},
  \end{equation*}
  where $C$ is independent of $\eps$ but depends on $d$, $g$ and $\tau_0$ (and thus its order $k$ defined in~\eqref{hypotheses filtre varphi0}).
\end{theorem}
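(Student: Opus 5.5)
We prove the estimate by Fourier expansion of $g$ together with integration by parts against the filter. Subtracting $\langle g \rangle$ from $g$ and using $\int_\omega \tau = 1$, we may assume $\langle g \rangle = 0$. Since the product structure~\eqref{definition filtre d dimension} makes everything tensorial, the natural route is to expand $g$ in Fourier series, $g(y) = \sum_{n \in \mathbb{Z}^d \setminus \{0\}} \hat g_n \, e^{2i\pi n \cdot y}$ (taking $Y=(0,1)^d$). Then
\begin{equation*}
\int_\omega \tau \, g^\eps = \sum_{n \neq 0} \hat g_n \prod_{i=1}^d \widehat{\tau_0}\left(\frac{n_i}{\eps}\right), \qquad \widehat{\tau_0}(\xi) := \int_0^1 \tau_0(t) \, e^{2i\pi \xi t} \, dt .
\end{equation*}
Interchanging sum and integral is legitimate in $L^2$ since $\tau$ is bounded and the series converges in $L^2(\omega)$.

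The key one-dimensional estimate is $|\widehat{\tau_0}(\xi)| \leq C \, |\xi|^{-(k+1)}$ for $|\xi| \geq 1$, together with $|\widehat{\tau_0}(\xi)| \leq 1$ for all $\xi$, the latter from $\tau_0 \geq 0$ and $\int_0^1 \tau_0 = 1$. The decay bound follows by integrating by parts $k+1$ times. The boundary terms at steps $1,\dots,k$ involve $\tau_0^{(i)}(0)$ and $\tau_0^{(i)}(1)$ for $0 \leq i \leq k-1$, which vanish by~\eqref{hypotheses filtre varphi0}. After these $k$ integrations we are left with $(2i\pi\xi)^{-k} \int_0^1 \tau_0^{(k)} e^{2i\pi\xi t}$. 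One further integration by parts, using $\tau_0 \in C^{k+1}$, gives a bound $C|\xi|^{-(k+1)}\left(|\tau_0^{(k)}(0)| + |\tau_0^{(k)}(1)| + \|\tau_0^{(k+1)}\|_{L^1}\right)$. The boundary terms at this step need not vanish, which is exactly why the rate is $\eps^{k+1}$ and not better.

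To conclude, for each $n \neq 0$ at least one index $i$ has $|n_i| \geq 1$, hence $|n_i/\eps| \geq 1/\eps \geq 1$. Bounding that factor by $C(\eps/|n_i|)^{k+1} \leq C \eps^{k+1}$ and the others by $1$ gives
\begin{equation*}
\left| \int_\omega \tau \, g^\eps \right| \leq C \, \eps^{k+1} \sum_{n \neq 0} |\hat g_n| \, \max_i |n_i|^{-(k+1)} .
\end{equation*}

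The main obstacle is the summability of this last series for a mere $L^2$ function. By Cauchy--Schwarz it is bounded by $\|g\|_{L^2(Y)} \left( \sum_{n \neq 0} \max_i |n_i|^{-2(k+1)} \right)^{1/2}$. Counting shells $\max_i |n_i| = m$, which contain $\sim m^{d-1}$ points, the second factor converges provided $2(k+1) > d$. This covers the relevant cases $d \leq 3$ with $k \geq 1$.

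If one wants no restriction on $d$, a cleaner approach is to work in the physical variable instead. Write $g = \langle g \rangle + \operatorname{div} G$ with $G$ periodic, iterate to express $g$ as a $(k+1)$-fold derivative of a periodic potential, and integrate by parts $k+1$ times against $\tau$, gaining a factor $\eps$ each time. The same vanishing boundary conditions of $\tau_0$ kill the boundary terms. Each integration by parts is one-dimensional in a variable $x_i$, for which one needs the one-dimensional antiderivative in $y_i$ of a zero-mean-in-$y_i$ part. Splitting $g$ into pieces by which Fourier variables are nonzero handles this, and there only $\|g\|_{L^2}$-type bounds enter. I would try the Fourier argument first, and fall back on this splitting if the dimension restriction matters.
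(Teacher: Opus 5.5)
The paper gives no proof of this statement; it quotes it from the cited works of Canc\`es et al.\ and Blanc--Le Bris. Your Fourier route is the natural one, and most of its ingredients are correct: the interchange of sum and integral, the bound $|\widehat{\tau_0}|\leq 1$, and the decay $|\widehat{\tau_0}(\xi)|\leq C_0|\xi|^{-(k+1)}$ obtained from $k+1$ integrations by parts. You also correctly note that the last, non-vanishing boundary term is what fixes the rate $\eps^{k+1}$.

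\textbf{The gap is in the final summation.} The statement holds for arbitrary $d$. After bounding all factors but one by $1$, you reduce to $\sum_{n\neq 0}|\hat g_n|\max_i|n_i|^{-(k+1)}$. For $g\in L^2(Y)$ this need not be finite once $d\geq 2(k+1)$, for instance $d=4$, $k=1$. Your argument therefore covers only $d\leq 3$ (all the cases used in the paper), and the general case is left to an unexecuted sketch.

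The restriction comes entirely from discarding the tensor structure. For $n\neq 0$ set $J(n)=\{i:\ n_i\neq 0\}$, which is nonempty. Since $\widehat{\tau_0}(0)=1$ and your decay bound holds for every $\xi\neq 0$, for $\eps\leq 1$ you get
\begin{equation*}
\Bigl|\prod_{i=1}^d \widehat{\tau_0}(n_i/\eps)\Bigr| \leq \max(1,C_0)^d \, \eps^{(k+1)|J(n)|} \prod_{i\in J(n)} |n_i|^{-(k+1)} \leq \max(1,C_0)^d \, \eps^{k+1} \prod_{i\in J(n)} |n_i|^{-(k+1)} .
\end{equation*}
Cauchy--Schwarz and Parseval then bound the remaining series by $\|g\|_{L^2(Y)}$ times the square root of
\[
\sum_{n\in\mathbb{Z}^d}\prod_{i\in J(n)}|n_i|^{-2(k+1)}=\bigl(1+2\zeta(2k+2)\bigr)^d ,
\]
because the sum factorizes coordinate by coordinate. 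This is finite for every $d$, so the estimate holds in full generality, with $C$ depending only on $d$, $\tau_0$ and $\|g\|_{L^2(Y)}$, and no fallback is needed.

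One correction to the fallback as you describe it. Of the $k+1$ integrations by parts in $x_i$, only the first $k$ have boundary terms killed by $\tau_0^{(j)}(0)=\tau_0^{(j)}(1)=0$ for $j\leq k-1$. The last one involves $\tau_0^{(k)}(0)$ and $\tau_0^{(k)}(1)$, which need not vanish; for the filter $30x^2(1-x)^2$ used in the paper, $\tau_0''(0)=\tau_0''(1)=60$. Those terms are only bounded, through traces on the faces $x_i\in\{0,1\}$ of the rescaled $(k+1)$-fold primitive. This is the same phenomenon you identified correctly in the Fourier computation.
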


In view of~\eqref{lambda 1 etoile}, \eqref{lambda 1 eps} and~\eqref{eq:resu_ordre_0}, we deduce from Theorem~\ref{theoreme convergence valeur propre} that
\begin{equation} \label{estimee convergence valeur propre}
  \left| \lambda_1^\eps - \lambda_1^\infty \right| \leq C \, \eps^{k+1}.
\end{equation}

\subsection{Convergence of the Lagrange multiplier and the eigenvector} \label{sec:cv_eigenvector}

Restricting ourselves to the one-dimensional case and assuming that $A=1$, we now show that $\psi_1^\eps$ converges to $\psi_1(\cdot/\eps)$, in an interior domain of $\omega= (0,1)$. To do this, we first estimate the Lagrange multiplier $\mu^\eps_1$.

\medskip

\noindent{\bf Step~1: Convergence of the Lagrange multiplier.} Under these assumptions, Equation~\eqref{pb phi filtre ordre 1 bis} reads
\begin{equation} \label{pb phi filtre ordre 1 1D}
  \left\{
  \begin{aligned}
    - \left( \tau \left[ \eps^2 \, (\psi_1^\eps)' - \mu^\eps_1 \right] \right)' &= \tau \left( \lambda_0^\eps \, \sigma_1^\eps + \lambda_1^\eps \, \sigma_0 - \Sigma_1^\eps \right) \quad \text{in $\omega$},
    \\
    \int_\omega \tau \, (\psi_1^\eps)' &= 0.
  \end{aligned}
  \right.
\end{equation}
Integrating the first line of~\eqref{pb phi filtre ordre 1 1D} between $0$ and $x$ and using that $\tau(0) = 0$, we obtain
\begin{equation} \label{equation phi 1}
  - \tau(x) \left[ \eps^2 \, (\psi_1^\eps)'(x) - \mu^\eps_1 \right] = \int_0^x \tau(t) \left( \lambda_0^\eps \, \sigma_1^\eps(t) + \lambda_1^\eps \, \sigma_0 - \Sigma_1^\eps(t) \right) dt.
\end{equation}
We then integrate over $\omega$ and use the second line of~\eqref{pb phi filtre ordre 1 1D} to obtain
\begin{align}
  \mu^\eps_1
  &= \int_\omega \int_0^x \tau(t) \left( \lambda_0^\eps \, \sigma_1^\eps(t) + \lambda_1^\eps \, \sigma_0 - \Sigma_1^\eps(t) \right) dt \, dx
  \nonumber
  \\
  &= \int_\omega (1-t) \, \tau(t) \left( \lambda_0^\eps \, \sigma_1^\eps(t) + \lambda_1^\eps \, \sigma_0 - \Sigma_1^\eps(t) \right) dt
  \nonumber
  \\
  &= \int_\omega (1-t) \, \tau(t) \, \sigma_0 \left( \lambda_1^\eps - \lambda_1^\infty \right) dt + \int_\omega (1-t) \, \tau(t) \, Q^\eps(t) \, dt,
  \label{eq:normandie12}
\end{align}
where $Q^\eps(t) = Q(t/\eps)$ for the $Y$-periodic function
\begin{equation} \label{eq:def_Q}
  Q(y) = \lambda_0^\infty \, \sigma_1(y) + \lambda_1^\infty \, \sigma_0 - \Sigma_1(y).
\end{equation}
Note that we have used the fact that $\lambda_0^\eps = \lambda_0^\infty$ (recall~\eqref{eq:resu_ordre_0}) to identify $Q$. In view of~\eqref{lambda 1 etoile}, we observe that $\langle Q \rangle = 0$. 

The first term in the right-hand side of~\eqref{eq:normandie12} is easy to estimate in view of~\eqref{estimee convergence valeur propre}:
\begin{equation} \label{eq:normandie13}
\left| \int_\omega (1-t) \, \tau(t) \, \sigma_0 \left( \lambda_1^\eps - \lambda_1^\infty \right) dt \right| = C \left| \lambda_1^\eps - \lambda_1^\infty \right| \leq C \, \eps^{k+1}.
\end{equation}
We now introduce $\widehat{\tau}(t) = (1-t) \, \tau(t)$. It is easy to verify that $\widehat{\tau} \in C^{k+1}(0,1)$ and that, for any $0 \leq i \leq k-1$, we have $\widehat{\tau}^{(i)}(0) = \widehat{\tau}^{(i)}(1) = 0$. We can thus use Theorem~\ref{theoreme convergence valeur propre} with the filter $\widehat{\tau}$ and the $Y$-periodic function $Q$, and we obtain, for the second term in the right-hand side of~\eqref{eq:normandie12}, that
$$
\left| \int_\omega (1-t) \, \tau(t) \, Q^\eps(t) - \langle Q \rangle \, \widehat{Z} \right| \leq C \, \eps^{k+1},
$$
with $\dis \widehat{Z} = \int_0^1 (1-t) \, \tau(t) \, dt$. Since the mean of $Q$ vanishes, we deduce from~\eqref{eq:normandie12}, \eqref{eq:normandie13} and the above result that
\begin{equation} \label{mu eps 1}
  |\mu^\eps_1| \leq C \, \eps^{k+1}.
\end{equation}

\medskip

\noindent{\bf Step~2: Convergence of the eigenvector.} We can recast~\eqref{equation phi 1} as
\begin{align}
  & (\psi_1^\eps)'(x)
  \nonumber
  \\
  &= \frac{\mu_1^\eps}{\eps^2} + \frac{1}{\eps^2 \, \tau(x)} \int_0^x \tau(t) \left( \Sigma_1^\eps(t) - \lambda_0^\eps \, \sigma_1^\eps(t) - \lambda_1^\eps \, \sigma_0 \right) dt
  \nonumber
  \\
  &= \frac{\mu_1^\eps}{\eps^2} + \frac{1}{\eps^2 \, \tau(x)} \int_0^x \tau(t) \, \sigma_0 \left( \lambda_1^\infty - \lambda_1^\eps \right) dt - \frac{1}{\eps^2 \, \tau(x)} \int_0^x \tau(t) \, Q^\eps(t) \, dt,
  \label{eq:normandie14}
\end{align}
where we use again the function $Q$ introduced in~\eqref{eq:def_Q}.

Under the assumptions made at the beginning of Section~\ref{sec:cv_eigenvector}, Equation~\eqref{psi ordre 1} reads $-\psi_1'' = Q$ in $Y$, with periodic boundary conditions. We thus deduce that
\begin{equation} \label{psi 1 prime}
  \forall y \in Y, \quad \psi_1'(y) = \int_{s=0}^1 \int_{t=0}^s Q(t) \, dt \, ds - \int_{t=0}^y Q(t) \, dt.
\end{equation}
We expand $Q$ as a Fourier series, recalling that $\langle Q \rangle = 0$. We thus have
\begin{equation} \label{Fourier decomposition sigma 1}
  \forall t \in Y, \quad Q(t) = \sum_{j \in \mathbb{Z}, j \neq 0} \widehat{Q}_j \, e^{2i \pi j t}.
\end{equation}
We then insert~\eqref{Fourier decomposition sigma 1} in~\eqref{psi 1 prime} and obtain
\begin{align}
  \psi_1'(y) &= \int_{s=0}^1 \int_{t=0}^s \sum_{j \in \mathbb{Z}, j \neq 0} \widehat{Q}_j \, e^{2i \pi j t} \, dt \, ds - \int_0^y \sum_{j \in \mathbb{Z}, j \neq 0} \widehat{Q}_j \, e^{2i \pi j t} \, dt
  \nonumber
  \\
  &= - \sum_{j \in \mathbb{Z}, j \neq 0} \frac{\widehat{Q}_j}{2i \pi j} - \sum_{j \in \mathbb{Z}, j \neq 0} \widehat{Q}_j \, \frac{e^{2i \pi j y} - 1}{2i \pi j}
  \nonumber
  \\
  &= - \sum_{j \in \mathbb{Z}, j \neq 0} \widehat{Q}_j \, \frac{e^{2i \pi j y}}{2i \pi j}.
  \label{eq:normandie15}
\end{align}
Turning to the third term of~\eqref{eq:normandie14}, we write~\eqref{Fourier decomposition sigma 1} in the form
$$
\forall t \in \omega, \quad Q^\eps(t) = \sum_{j \in \mathbb{Z}, j \neq 0} \widehat{Q}_j \, e^{\frac{2i \pi j t}{\eps}}
$$
and obtain
\begin{align*}
  & \frac{1}{\eps^2 \, \tau(x)} \int_0^x \tau(t) \, Q^\eps(t) \, dt
  \\
  &= \sum_{j \in \mathbb{Z}, j \neq 0} \frac{\widehat{Q}_j}{\eps^2 \, \tau(x)} \int_0^x \tau(t) \, e^{\frac{2i \pi j t}{\eps}} \, dt
  \\
  &= \sum_{j \in \mathbb{Z}, j \neq 0} \frac{\widehat{Q}_j}{\eps \, \tau(x)} \, \frac{1}{2i \pi j} \left(\tau(x) \, e^{\frac{2i \pi j x}{\eps}} - \int_0^x \tau'(t) \, e^{\frac{2i \pi j t}{\eps}} \, dt \right)
  \\
  &= \sum_{j \in \mathbb{Z}, j \neq 0} \frac{\widehat{Q}_j}{2i \pi j \eps} \, e^{\frac{2i \pi j x}{\eps}} - \sum_{j \in \mathbb{Z}, j \neq 0} \frac{\widehat{Q}_j}{\eps \, \tau(x)} \, \frac{1}{2i \pi j} \int_0^x \tau'(t) \, e^{\frac{2i \pi j t}{\eps}} \, dt
  \\
  &= - \frac{1}{\eps} \, \psi_1'\left(\frac{x}{\eps}\right) - \sum_{j \in \mathbb{Z}, j \neq 0} \frac{\widehat{Q}_j}{\eps \, \tau(x)} \, \frac{1}{2i \pi j} \int_0^x \tau'(t) \, e^{\frac{2i \pi j t}{\eps}} \, dt,
\end{align*}
where we have used~\eqref{eq:normandie15} at the last line. We thus deduce from~\eqref{eq:normandie14} that
\begin{multline*}
  (\psi_1^\eps)'(x) - \frac{1}{\eps} \, \psi_1'\left(\frac{x}{\eps}\right) = \frac{\mu_1^\eps}{\eps^2} + \frac{1}{\eps^2 \, \tau(x)} \int_0^x \tau(t) \, \sigma_0 \left( \lambda_1^\infty - \lambda_1^\eps \right) dt \\ + \sum_{j \in \mathbb{Z}, j \neq 0} \frac{\widehat{Q}_j}{\eps \, \tau(x)} \, \frac{1}{2i \pi j} \int_0^x \tau'(t) \, e^{\frac{2i \pi j t}{\eps}} \, dt,
\end{multline*}
and thus, using~\eqref{estimee convergence valeur propre}, \eqref{mu eps 1} and an integration by part for the last term,
\begin{multline*}
  \left| (\psi_1^\eps)'(x) - \frac{1}{\eps} \, \psi_1'\left(\frac{x}{\eps}\right) \right| \leq C \, \eps^{k-1} \left( 1 + \frac{1}{\tau(x)} \right) \\ + \left| \sum_{j \in \mathbb{Z}, j \neq 0} \frac{\widehat{Q}_j}{\tau(x)} \, \frac{1}{(2i \pi j)^2} \left( \tau'(x) \, e^{\frac{2i \pi j x}{\eps}} - \tau'(0) - \int_0^x \tau''(t) \, e^{\frac{2i \pi j t}{\eps}} \, dt \right) \right|,
\end{multline*}
and therefore
\begin{equation} \label{eq:normandie16}
\left| (\psi_1^\eps)'(x) - \frac{1}{\eps} \, \psi_1'\left(\frac{x}{\eps}\right) \right| \leq C \, \eps^{k-1} \left( 1 + \frac{1}{\tau(x)} \right) + \frac{C}{\tau(x)} \sum_{j \in \mathbb{Z}, j \neq 0} \frac{\left|\widehat{Q}_j\right|}{j^2}.
\end{equation}
The right-most series converges, since $\sigma$ and $\Sigma$ both belong to $L^\infty(Y)$, and thus $Q \in L^2(Y)$, which implies that $\sum_{j \in \mathbb{Z}} \left| \widehat{Q}_j \right|^2$ converges.

We next estimate the $L^2$ norm of the above difference in an interior domain of $\omega$. This is indeed a relevant norm for us: recall indeed that, in practice, we compute our proxy by solving Problem~\eqref{pb patch oversampling filtre tmp} on the oversampled element $S_K$, and next only use the restriction of the proxy on the element $K$ (see e.g.~\eqref{chi phi}). We thus introduce some $0 < \alpha < 1/2$. Using that $\| 1/\tau \|_{L^2(\alpha,(1-\alpha))}$ is finite, we have
$$
\left\| (\psi_1^\eps)' - [\psi_1(\cdot/\eps)]' \right\|_{L^2\left(\alpha,(1-\alpha)\right)} \leq C \left( \eps^{k-1} + 1 \right).
$$
In order to estimate a relative difference, we now argue that
$$
\left\| [\psi_1(\cdot/\eps)]' \right\|^2_{L^2\left(\alpha,(1-\alpha)\right)} \geq \frac{1}{\eps} \left( \frac{(1-2\alpha)}{\eps} -2 \right) \int_0^1 | \psi_1'(y) |^2 \, dy \geq \frac{C}{\eps^2},
$$
where the right-most estimate stems from the assumption that we have assumed $\psi$ (and therefore $\psi_1$) to not be constant. We thus eventually obtain
\begin{equation} \label{estimee convergence vecteur propre}
\frac{\left\| (\psi_1^\eps)' - [\psi_1(\cdot/\eps)]' \right\|_{L^2(\alpha,(1-\alpha))}}{\left\| [\psi_1(\cdot/\eps)]' \right\|_{L^2(\alpha,(1-\alpha))}} \leq C \left( \eps^k + \eps \right) \leq C \, \eps,
\end{equation}
since the order of the filter satisfies $k \geq 1$.

It is interesting to note that the above analysis points to the fact that, in terms of rate of convergence for the eigenvector (which is the relevant quantity in our approach), it is not useful to increase the order of the filter: the convergence (in the $H^1$ semi-norm) always holds at the rate $O(\eps)$, whatever $k \geq 1$.

\begin{rmq}
  Looking at~\eqref{eq:normandie16}, we see terms of the form $1/\tau$, where $\tau$ is a function that vanishes at the boundaries of $\omega$. It is thus delicate to estimate the relative error $\dis \frac{\left\| (\psi_1^\eps)' - [\psi_1(\cdot/\eps)]' \right\|_{L^2(\omega)}}{\left\| [\psi_1(\cdot/\eps)]' \right\|_{L^2(\omega)}}$ over the entire domain $\omega$. This is consistent with the numerical observation that $(\psi_1^\eps)' - [\psi_1(\cdot/\eps)]'$ may be large at the boundary of $\omega$, thus the need of an oversampling strategy.
\end{rmq}

\begin{rmq}
  The case when we do not use any filtering function formally corresponds to the choice $k=0$. In that case, the estimate~\eqref{estimee convergence vecteur propre} does not allow to conclude to the convergence of $(\psi_1^\eps)'$ to $[\psi_1(\cdot/\eps)]'$ (in the sense of a converging relative error), an observation which is consistent with the numerical results discussed in Section~\ref{sec:num_filtre} below.
\end{rmq}

\subsection{Numerical comparisons} \label{sec:num_filtre}

We now present some numerical results in dimension 1, on the domain $\omega = (0,1)$. We compare the rates of convergence predicted by the estimates~\eqref{estimee convergence valeur propre} (on the eigenvalue) and~\eqref{estimee convergence vecteur propre} (on the eigenvectors), obtained through arguments in the framework of small perturbations for $\Sigma$ and $\sigma$, with actual rates of convergence observed on numerical results. The comparison is performed in a periodic but non perturbative regime: the functions $\Sigma$ and $\sigma$ are periodic but their oscillations are not necessarily small. 

We are going to compare the first eigencouple $(\lambda^\infty, \psi)$ of the cell problem~\eqref{pb spectral} (which is our reference quantity) with the first eigencouple $(\lambda^\eps, \psi^\eps)$ of the problem~\eqref{demo filtre : pb patch oversampling filtre} (problem posed on an oversampling domain and using a filtering function). We also consider, in our test, the first eigencouple $(\lambda^{\eps,\#}, \psi^{\eps,\#})$ of the following problem: find the eigenvector $\psi^{\eps,\#} \in H^1_{\rm per}(\omega)$ (i.e. satisfying periodic boundary conditions) associated to the smallest eigenvalue $\lambda^{\eps,\#}$ of the eigenvalue problem
\begin{equation*} 
  \forall v \in H^1_{\rm per}(\omega), \quad \int_\omega \Sigma^\eps \, \psi^{\eps,\#} \, v + \eps^2 \int_\omega (\nabla v)^T A^\eps \nabla \psi^{\eps,\#} = \lambda^{\eps,\#} \int_\omega \sigma^\eps \, \psi^{\eps,\#} \, v.
\end{equation*}
This problem is a formulation on $\omega$ of Problem~\eqref{pb patch oversampling}, posed on an oversampling domain and without any filtering function.

For the eigenvalue, we consider the relative errors
\begin{equation} \label{eq_ober:relative_error_lambda_eps filtre}
  \frac{| \lambda^\eps - \lambda^\infty |}{| \lambda^\infty |}
\end{equation}
and
\begin{equation} \label{eq_ober:relative_error_lambda_eps periodique}
  \frac{| \lambda^{\eps,\#} - \lambda^\infty |}{| \lambda^\infty |}.
\end{equation}
For the eigenvector, we consider the relative error on a domain $\omega_{\rm int}$ interior to $\omega$. Two norms are relevant: the $L^\infty$ norm, for which we use the relative errors
\begin{equation} \label{eq_ober:relative_error_phi_eps filtre_Linfty}
  \frac{\left\| \psi^\eps - \psi(\cdot/\eps) \right\|_{L^\infty(\omega_{\rm int})}}{\left\| \psi(\cdot/\eps) \right\|_{L^\infty(\omega_{\rm int})}}
\end{equation}
and
\begin{equation} \label{eq_ober:relative_error_phi_eps periodique_Linfty}
  \frac{\left\| \psi^{\eps,\#} - \psi(\cdot/\eps) \right\|_{L^\infty(\omega_{\rm int})}}{\left\| \psi(\cdot/\eps) \right\|_{L^\infty(\omega_{\rm int})}},
\end{equation}
and the $H^1$ semi-norm, for which we use the relative errors
\begin{equation} \label{eq_ober:relative_error_phi_eps filtre}
  \frac{\left\| (\psi^\eps)' - [\psi(\cdot/\eps)]' \right\|_{L^2(\omega_{\rm int})}}{\left\| [\psi(\cdot/\eps)]' \right\|_{L^2(\omega_{\rm int})}}
\end{equation}
and
\begin{equation} \label{eq_ober:relative_error_phi_eps periodique}
  \frac{\left\| (\psi^{\eps,\#})' - [\psi(\cdot/\eps)]' \right\|_{L^2(\omega_{\rm int})}}{\left\| [\psi(\cdot/\eps)]' \right\|_{L^2(\omega_{\rm int})}}.
\end{equation}
In practice, we choose $\omega_{\rm int} = (1/3,2/3)$. In the numerical tests, we refer to the errors~\eqref{eq_ober:relative_error_lambda_eps filtre}, \eqref{eq_ober:relative_error_phi_eps filtre_Linfty} and~\eqref{eq_ober:relative_error_phi_eps filtre} as \textit{filtered} and the errors~\eqref{eq_ober:relative_error_lambda_eps periodique}, \eqref{eq_ober:relative_error_phi_eps periodique_Linfty} and~\eqref{eq_ober:relative_error_phi_eps periodique} as \textit{periodic}.

We choose the diffusion and reaction coefficients as follows:
$$
\forall y \in Y, \quad A(y) = 5 \, \sin^2(\pi y) + 1, \quad \Sigma(y) = 10 \, \cos^2(\pi y) + 1, \quad \sigma(y) = 1.
$$

\subsubsection{Second order filter}

We set
\begin{equation} \label{ober_definition filtre ordre 2 numerique}
  \tau_0(x) = 30 \, x^2 \, (1-x)^2.
\end{equation}
According to~\eqref{hypotheses filtre varphi0}, this corresponds to a filter of order $k=2$. The results we get are shown on Figures~\ref{fig_ober:filter2_VP} and~\ref{fig_ober:filter2_H1}. In view of~\eqref{estimee convergence valeur propre} and~\eqref{estimee convergence vecteur propre}, we expect a convergence of the eigenvalue at the rate $\eps^3$ and a relative error (in the $H^1$ semi-norm) on the eigenvector of the order of $\eps$. This is indeed what we numerically observe on the red curves. When not using a filter, we expect (from~\eqref{estimee convergence valeur propre} and~\eqref{estimee convergence vecteur propre} with $k=0$) a convergence of the eigenvalue at the rate $\eps$ and a relative error on the eigenvector which does not converge to 0 when $\eps \to 0$. This is indeed what we numerically observe on the blue curves (with actually the relative error~\eqref{eq_ober:relative_error_phi_eps periodique} increasing as $\eps$ goes to 0). 

\begin{figure}[H]
  \centering
  \begin{tikzpicture}
    \begin{loglogaxis}[
        xlabel={$1/\eps$},
        ylabel={Relative error},
        grid=both, 
        major grid style={line width=0.8pt, draw=gray!50},
        minor grid style={line width=0.5pt, draw=gray!30},
        legend style={at={(0.5,-0.25)}, anchor=north, legend columns=-1},
        width=10cm, height=6.5cm,
        ymin=0.0000005, ymax=0.05,
        xmax=26,
        cycle list name=color list
      ]
      \addplot[mark=x, color=red] table[x index=0, y index=1, col sep=comma] {Fichiers_Erreurs_15juin26/filtreXcarre/Valeur_propre/Liste_Erreur_VP_filtre.txt};
      \addlegendentry{filtered}
      
      \addplot[mark=x, color=blue] table[x index=0, y index=1, col sep=comma] {Fichiers_Erreurs_15juin26/filtreXcarre/Valeur_propre/Liste_Erreur_VP_periodique.txt};
      \addlegendentry{periodic}
      
      \addplot[solid] table[x index=0, y index=1, col sep=comma] {Fichiers_Erreurs_15juin26/filtreXcarre/Valeur_propre/pente_-1.txt};
      \addlegendentry{(-1) slope}
      
      \addplot[dashed] table[x index=0, y index=1, col sep=comma] {Fichiers_Erreurs_15juin26/filtreXcarre/Valeur_propre/pente_-3.txt};
      \addlegendentry{(-3) slope}
    \end{loglogaxis}
  \end{tikzpicture}
  \centering
  \caption{Second order filter~\eqref{ober_definition filtre ordre 2 numerique}: comparison of the relative errors~\eqref{eq_ober:relative_error_lambda_eps filtre} (in red) and~\eqref{eq_ober:relative_error_lambda_eps periodique} (in blue) on the eigenvalue as a function of $1/\eps$.}
  \label{fig_ober:filter2_VP}
\end{figure}

\begin{figure}[H]
  \centering
  \begin{tikzpicture}
    \begin{loglogaxis}[
        xlabel={$1/\eps$},
        ylabel={Relative error},
        grid=both, 
        major grid style={line width=0.8pt, draw=gray!50},
        minor grid style={line width=0.5pt, draw=gray!30},
        legend style={at={(0.5,-0.25)}, anchor=north, legend columns=-1},
        width=10cm, height=6.5cm,
        ymin=1e-2, ymax=0.45,
        xmax=26,
        cycle list name=color list
      ]
      \addplot[mark=x, color=red] table[x index=0, y index=1, col sep=comma] {Fichiers_Erreurs_15juin26/filtreXcarre/H1_interieur/Liste_Erreur_H1_filtre.txt};
      \addlegendentry{filtered}
      
      \addplot[mark=x, color=blue] table[x index=0, y index=1, col sep=comma] {Fichiers_Erreurs_15juin26/filtreXcarre/H1_interieur/Liste_Erreur_H1_periodique.txt};
      \addlegendentry{periodic}
      
      \addplot[dashed] table[x index=0, y index=1, col sep=comma] {Fichiers_Erreurs_15juin26/filtreXcarre/H1_interieur/pente_-1.txt};
      \addlegendentry{(-1) slope}
    \end{loglogaxis}
  \end{tikzpicture}
  \centering
  \caption{Second order filter~\eqref{ober_definition filtre ordre 2 numerique}: comparison of the relative errors~\eqref{eq_ober:relative_error_phi_eps filtre} (in red) and~\eqref{eq_ober:relative_error_phi_eps periodique} (in blue) on the eigenvector as a function of $1/\eps$.}
  \label{fig_ober:filter2_H1}
\end{figure}

\medskip

We have also monitored the error in $L^\infty$ norm and obtained the same type of results as for the $H^1$ semi-norm (results not shown): the error~\eqref{eq_ober:relative_error_phi_eps filtre_Linfty} decreases at the rate $\eps$, while the error~\eqref{eq_ober:relative_error_phi_eps periodique_Linfty} does not converge (and rather increases at the rate $1/\eps$).

\subsubsection{First order filter}

We now set 
\begin{equation} \label{ober_definition filtre ordre 1 numerique}
  \tau_0(x) = 6 \, x \, (1-x).
\end{equation}
According to~\eqref{hypotheses filtre varphi0}, this corresponds to a filter of order $k=1$. In view of~\eqref{estimee convergence valeur propre} and~\eqref{estimee convergence vecteur propre}, we expect a convergence of the eigenvalue at the rate $\eps^2$ and a relative error (in the $H^1$ semi-norm) on the eigenvector of the order of $\eps$. This is indeed what we obtain (results not shown). The results concerning the approximation of the eigenvector in the $L^\infty$ norm are shown on Figure~\ref{fig_ober:filter1_Linfty}. The error~\eqref{eq_ober:relative_error_phi_eps filtre_Linfty} remains small (of the order of 1\%) for all the values of $\eps$ we considered, but it does not decrease when $\eps \to 0$. This demonstrates the usefulness of using a filter of order at least $k \geq 2$.

\begin{figure}
  \centering
  \begin{tikzpicture}
    \begin{loglogaxis}[
        xlabel={$1/\eps$},
        ylabel={Relative error},
        grid=both, 
        major grid style={line width=0.8pt, draw=gray!50},
        minor grid style={line width=0.5pt, draw=gray!30},
        legend style={at={(0.5,-0.25)}, anchor=north, legend columns=-1}, 
        width=10cm, height=6.5cm, 
        ymin=0.002, ymax=0.25,
        xmax=26,
        cycle list name=color list
      ]
      \addplot[mark=x, color=red] table[x index=0, y index=1, col sep=space] {Fichiers_Erreurs_15juin26/filtreX/Linfty/Liste_Erreur_Linfty_filtreX.txt};
      \addlegendentry{filtered}
      
      \addplot[mark=x, color=blue] table[x index=0, y index=1, col sep=space] {Fichiers_Erreurs_15juin26/filtreX/Linfty/Liste_Erreur_Linfty_periodique_filtreX.txt};
      \addlegendentry{periodic}
      
      \addplot[solid] table[x index=0, y index=1, col sep=space] {Fichiers_Erreurs_15juin26/filtreX/Linfty/pente_1_filtreX.txt};
      \addlegendentry{(1) slope}
      
      \addplot[dashed] table[x index=0, y index=1, col sep=space] {Fichiers_Erreurs_15juin26/filtreX/Linfty/pente_0_filtreX.txt};
      \addlegendentry{(0) slope}
    \end{loglogaxis}
  \end{tikzpicture}
  \centering
  \caption{First order filter~\eqref{ober_definition filtre ordre 1 numerique}: comparison of the relative errors~\eqref{eq_ober:relative_error_phi_eps filtre_Linfty} (in red) and~\eqref{eq_ober:relative_error_phi_eps periodique_Linfty} (in blue) on the eigenvector as a function of $1/\eps$.}
  \label{fig_ober:filter1_Linfty}
\end{figure}

\subsection{Conclusions}

In dimension 1, we thus observe numerically, for periodic non-perturbative examples, the same rate of convergence (for the error on the eigenvalue as well as for the error in $H^1$ semi-norm on the eigenvector) as those theoretically predicted for periodic, perturbative cases. Furthermore, our tests show that considering a filter of order $k=1$ may lead to inaccurate results. A better choice is to consider a filter of order $k=2$.

Our numerical tests in dimension 2, again for periodic non-perturbative cases, and again using a second-order filter, show the same rates of convergence (for the errors~\eqref{eq_ober:relative_error_lambda_eps filtre}, \eqref{eq_ober:relative_error_phi_eps filtre_Linfty} and~\eqref{eq_ober:relative_error_phi_eps filtre}) as in dimension 1 (results not shown).

\begin{rmq} \label{rem:vector_valued}
  For the vector-valued variant of the problem, we can numerically investigate the error between the first eigencouple of~\eqref{pb patch oversampling filtre 2 groupes_1}--\eqref{pb patch oversampling filtre 2 groupes_2} and the first eigencouple of~\eqref{pb spectral 2 groupes} (and likewise for the adjoint problems). We observe the same convergence rates as for the scalar-valued problem.
\end{rmq}

\section*{Acknowledgments}

We would like to thank Grégoire Allaire for stimulating and enlightening discussions about this work. The authors are grateful to ONR and EOARD for their continuous support, in particular through the current grants N00014-25-1-2299 and FA8655-24-1-7057.

\bibliographystyle{plain}


\bibliography{biblio_lll.bib}

\end{document}